\newcommand{\Z}{\mathbb{Z}}
\newcommand{\C}{\mathbb{C}}
\newcommand{\isom}{\xrightarrow{\;\sim\;}}
\def\beq{\begin{equation}}
\def\eeq{\end{equation}}
\def\a{\alpha}
\def\b{\beta}
\def\d{\delta}
\def\e{\varepsilon}
\def\A {\mathcal A}
\def\H{\mathcal H}
\def\G{\mathcal G}
\def\O {\mathcal O}
\def\M{\mathcal M}
\def\F{\mathbf F}
\newenvironment{res}
               {\begin{equation}
\begin{minipage}{0.85\textwidth}}
               { \end{minipage}\end{equation} }
\def\ber{\begin{res} }
\def\eer{\end{res}}
\numberwithin{equation}{section}
\newtheorem{thm}{Theorem}[section]
\newtheorem{lemma}[thm]{Lemma}
\newtheorem{lem}[thm]{Lemma}
\newtheorem{cor}[thm]{Corollary}
\newtheorem{prop}[thm]{Proposition}
\newtheorem{dfn}[thm]{Definition}
\newtheorem{rem}[thm]{Remark}
\def\section{\@startsection {section}{1}{\z@}{3.5ex plus 1ex minus
    .2ex}{2.3ex plus .2ex}{\large\bf}}
    \def\subsection{\@startsection{subsection}{2}{\z@}{3.25ex plus 1ex minus
 .2ex}{1.5ex plus .2ex}{\bf}}
\def\a{\alpha}
\def\b{\beta}
\def\l{\lambda}
\def\e{\epsilon}
\def\ga{\gamma}
\def\d{\delta}
\def\cc{C^\infty}
\def\af{\mathfrak{a}}
\def\het{\operatorname{ht}}
\def\q{{\bf q}}
\def\s{\mathfrak{s}}
\def\r{\mathfrak{r}}
\def\si{\sigma}
\def\Wf{\mathfrak{W}}
\def\A{\mathcal{A}}
\def\Nc{\mathcal{N}}
\def\O{\mathcal{O}}
\def\Ze{\mathcal{Z}}
\def\M{\mathcal{M}}
\def\A{\mathcal{A}}
\def\H{\mathcal{H}}
\def\F{\mathcal{F}}
\def\J{\mathcal{J}}
\def\K{\mathcal{K}}
\def\P{\mathcal{P}}
\def\S{\mathcal{S}}
\def\V{\mathcal{V}}
\def\W{\mathcal{W}}
\def\Ri{\mathcal{R}}
\def\Rf{\mathfrak{R}}
\def\rnr{R_{\mathrm{nr}}}
\def\i{\iota}
\begin{document}
\title[Analytic R-groups]{Analytic R-groups of affine Hecke algebras}
\author{Patrick Delorme}
\address{Institut de Math\'ematiques de Luminy, UPR 9016 du
CNRS\\ Facult\'e des Sciences de Luminy\\ 163 Avenue de Luminy,
Case 901, 13288 Marseille Cedex 09\\ France\\ email:
delorme@iml.univ-mrs.fr}
\thanks{During the preparation of this paper
the first named  author was supported by the program ANR-08-BLAN-01}
\author{Eric Opdam}
\address{Korteweg de Vries Institute for Mathematics\\
University of Amsterdam\\P.O. Box 94248\\1090 GE Amsterdam\\
The Netherlands\\ email: e.m.opdam@uva.nl}
\date{\today}
\subjclass[2000]{Primary 20C08; Secondary 22D25, 22E35, 43A30}
\thanks{It is a pleasure to thank Dan Ciubotaru, David Kazhdan
and Birgit Speh for useful comments}
\maketitle
\tableofcontents
\section{Introduction}
Let $\H$ be an affine Hecke algebra in the sense of Lusztig (see
\cite{Lu} or \cite{O}). The Fr\'echet algebra $\S$, the Schwartz algebra
completion of $\H$, is a central object of study for the
harmonic analysis of $\H$.
The key to understanding the structure of $\S$
is the Fourier transform isomorphism $\F$ which
(by the main result of \cite[Theorem 5.3]{DO}) identifies $\S$ with the algebra
of Weyl group invariant sections of a certain smooth endomorphism bundle
over the space $\Xi_u$ of ``tempered standard induction data.''

Such a tempered standard induction datum consists of a triple $(P,\delta,t)$ where $P$
denotes a subset of the set of simple roots of the based root system underlying $\H$.
This subset $P$ defines a ``standard Levi subalgebra'' $\H^P$ of $\H$, with semisimple
quotient $\H_P$, and $\delta$ denotes a discrete series representation of $\H_P$.
Finally $t$ is a unitary induction parameter for $\H_P$, which is used to lift
$\delta$ to a unitary representation $\delta_t$ of $\H^P$. The set of such unitary
parameters has the structure of a compact real torus.
Thus $\Xi_u$ is a finite union of compact tori.

The endomorphism bundle alluded to above is constructed from a canonical projective
unitary representation $\pi$ of the groupoid $\W_{\Xi_u}$
of tempered standard induction data (cf. \cite[Section 3.5]{DO}).
The arrows in this groupoid are twists by certain isomorphisms of the
$\H_P$ defined in terms of the (affine) Weyl group.
This groupoid $\W_{\Xi_u}$ has been determined explicitly in general
if $\H$ is of simple type, see \cite{OpSo}.
The projective representation $\pi$ yields a $2$-cocycle
$\gamma\in Z^2(\W_{\Xi_u},\operatorname{U}(1))$ of $\W_{\Xi_u}$ with values
in $\operatorname{U}(1)$.

The results \cite[Theorem 3.11, Theorem 3.19]{DO} state that
for every $\xi\in\Xi_u$ the representation $\pi(\xi)$ of $\H$
is unitary and tempered,
and that for every irreducible tempered module $\rho$ of $\H$ there exists a
unique orbit $\W\xi$ (with $\xi\in\Xi_u$) such that $\rho$ is equivalent to
an irreducible summand of $\pi(\xi)$.

The main purpose of this article is to decompose $\pi(\xi)$ for
$\xi\in\Xi_u$. For this reason we introduce a certain subgroup
$\Rf_\xi$, called the $R$-group, of the isotropy subgroup
$\W_{\xi,\xi}$ of $\xi$ in the Weyl groupoid $\W$. We show an analog of the
Knapp-Stein Dimension Theorem \cite{KS}, \cite{S} which says that the
commutant of the representation $\pi(\xi)$ has a basis given by the
$\pi(\mathfrak{r})$ with $\mathfrak{r}$ running over the elements of
$\Rf_\xi$. As in \cite[Section 2]{A} it follows that there exists a
bijective correspondence between the irreducible representations
of $\H$ arising as summands of $\pi(\xi)$ on the one hand, and
the irreducible representations of the twisted complex group algebra
${}^{\gamma_\xi}\mathbb{C}[\Rf_\xi]$ on the other hand (where
$\gamma_\xi$ denotes the restriction of $\gamma$ to
$\Rf_\xi$). This is the content of our main result,
Theorem \ref{thm:Rmorequiv}.
If $\H$ is of simple type, our results yield the
classification of the (equivalence classes of) irreducible
tempered modules of $\H$. In the ``equal parameter case'' \cite{KL}
and more generally for the affine Hecke algebras arising in the context of
``unipotent representations'' of inner forms of simple adjoint split groups,
a classification of the tempered irreducible modules
in terms of geometric data is also known \cite{Lu5}.

The analogy with the theory of tempered representations of the group $G$
of points of a reductive group defined over a local field
is not surprising, since for various specializations of the parameters
of $\H$ it is known that its module category is equivalent
to a Bernstein block $\mathfrak{b}$ in the category of smooth representations of
such a group $G$ (\cite{Lu4}, \cite{Mo1}, \cite{Mo2}, \cite{MP}, \cite{Hei}).
If $\H$ is in fact isomorphic to the Hecke algebra of a ``$\mathfrak{b}$-type''
then this equivalence is known to respect temperedness and Plancherel measures
\cite{BHK}. Hence in this context our results
yield the classification of the irreducible tempered
representations of $G$ which belong to $\mathfrak{b}$.

It is a fundamental question how the $R$-groups
of parabolic induction for $\H$ which we will define below are related to
the $R$-groups of parabolic induction for $G$ if $\H$ is the Hecke algebra of a type for
$\mathfrak{b}$ or in the context of \cite{Hei}. Some general results in this
direction have been achieved by Roche \cite{Ro}

The $R$-groups and 2-cohomology classes $[\ga_\xi]$ for classical affine
Hecke algebras are amenable to \emph{direct} explicit computation.
This is illustrated
by Slooten's computation \cite{Sl1} of the $R$-groups for classical
Hecke algebras when the inducing representation is discrete series with
real infinitesimal character (in the sense of \cite{BM}),
and by the results in Section \ref{sec:coc}
of the present paper, proving the triviality of the 2-cocyles $\ga_\xi$
for classical Hecke algebras in all cases. With these results at hand,
our decomposition theorem amounts in these cases to the proof of Slooten's conjectural
classification \cite[Conjecture 4.3(i)]{Sl1} of the irreducible
tempered representations with real central character for classical
Hecke algebras (see \cite{Sl1}).
For a \emph{geometric} approach to these results, see \cite{ChKa}.


The main technical thrust of the proof of Theorem
\ref{thm:Rmorequiv} is the fact that the
$\W$-average of the product of a smooth section of the endomorphism bundle
with the $c$-function is itself a smooth section (see equation (\ref{eq:av})).
Another technical tool is the computation of the constant term for
generic parameters \cite[Section 6.2]{DO}.
\section{Affine Hecke algebras}
The structure of an affine Hecke algebra $\H=\H(\Ri,q)$ is
determined by an affine root datum (with basis) $\Ri$ together
with a label function $q$ defined on the extended affine Weyl
group $W$ associated to $\Ri$. We refer the reader to
\cite{Lu},\cite{O},\cite{DO} for the details of the definition
of the algebra $\H(\Ri,q)$, which we will only briefly review
here.

Let $\Ri=(X,Y,R_0,R_0^\vee,F_0)$ be a root datum (with basis
$F_0\subset X$ of simple roots of $R_0\subset X$). Let $W_0$ denote the
Weyl group of the reduced integral root system $R_0$. The extended
affine Weyl group $W$ associated with $\Ri$ is by definition
$W=W_0\ltimes X$. The affine root system $R$ is equal to
$R:=R_0^\vee\times\mathbb{Z}\subset Y\times\mathbb{Z}$. Observe that
$R$ is closed for the natural action of $W$ on the set of integral
affine linear functions $Y\times\mathbb{Z}$ on $X$.
Furthermore $R$ is the disjoint union of the positive and the negative
affine roots $R=R_+\cup R_-$ as usual, and we define the length
function $l$ on $W$ by
\begin{equation}
l(w):=|R_+\cap w^{-1}R_-|.
\end{equation}
The affine simple roots are denoted by $F^{\mathrm{aff}}$.

A label function $q:W\to\mathbb{R}_+$ is a function which is length
multiplicative (i.e. $q(uv)=q(u)q(v)$ if $l(uv)=l(u)+l(v)$) and
which in addition satisfies $q(\omega)=1$ if $l(\omega)=0$.
Thus a label function is completely determined by its values on
the set $S^{\mathrm{aff}}$ of affine simple reflections in $W$.
Observe that this gives rise to a positive function on $S^{\mathrm{aff}}$
which is constant on $W$-conjugacy classes of simple reflections,
and conversely, every such function gives rise to a label
function.

We choose a base
$\q>1$ and define $f_s\in\mathbb{R}$ such that $q(s)=\q^{f_s}$
for all $s\in S^{\mathrm{aff}}$.

Given these data, the affine Hecke algebra $\H=\H(\Ri,q)$ is
described as follows. It is the unique complex unital algebra with
basis $N_w$ ($w\in W$) over $\mathbb{C}$ subject to the following
relations (here $q(s)^{1/2}$ denotes the positive square root of $q(s)$):
\begin{enumerate}
\item $N_{uv}=N_uN_{v}$
for all $u,v\in W$ such that
$l(uv)=l(u)+l(v)$.
\item
$(N_s+q(s)^{-1/2})(N_s-q(s)^{1/2})=0$
for all $s\in S^{\mathrm{aff}}$.
\end{enumerate}
\subsubsection{Root labels for the non-reduced root system}
The label function $q$ on $W$ can also be defined in terms
of root labels. We associate a possibly non-reduced root
system $\rnr$ with $\Ri$ by
\begin{equation}
\rnr:=R_0\cup\{2\a\mid \a^\vee\in R_0^\vee\cap 2Y\}.
\end{equation}
Observe that $a+2\in Wa$
for all $a\in R$, but that $a+1\in Wa$ iff $a=\a^\vee + n$ with
$2\a\not\in\rnr$.

Let $R\ni a\to q_a$ be the unique $W$-invariant function on $R$ such that
$q_{a+1}:=q(s_{a})$ for all $a\in F^{\mathrm{aff}}$.
Now for $\a=2\b\in \rnr\backslash R_0$ we define
\begin{equation}
q_{\a^\vee}:=\frac{q_{\b^\vee+1}}{q_{\b^\vee}}.
\end{equation}
It is easy to see that in this way the set of positive $W_0$-invariant
functions $\a^\vee\to q_{\a^\vee}$ on $\rnr^\vee$ corresponds bijectively to
the set of label functions $q$ on $W$.
With these conventions we have for all $w\in W_0$
\begin{equation}
q(w)=\prod_{\alpha\in R_{\mathrm{nr},+}\cap
w^{-1}R_{\mathrm{nr,-}}} q_{\alpha^\vee}.
\end{equation}

We denote by $R_1\subset X$ the following
reduced root subsystem of $\rnr$:
\begin{equation}
R_{1}:=
\{\alpha\in R_\mathrm{nr}\mid
2\alpha\not\in R_\mathrm{nr}\}.
\end{equation}
Let $F_1\subset R_1$ be the bases of simple roots corresponding to $F_0$.
The root system $R_1$ differs from $R_0$ only if the root datum
of $\H$ contains direct summands of ``type $C_n^{(1)}$'', the irreducible
root datum with $R_0$ of type $B_n$ and $X$ the root lattice of $B_n$.
This is the only irreducible root datum for which the affine Hecke algebra
admits $3$ independent parameters. For this root datum, $R_1$ is of type
$C_n$. When applying Lusztig's first reduction theorem
(cf. \cite[Theorem 2.6]{OpSo})
one needs to consider the affine Weyl
group $R_1^{(1)}$ rather than $R_0^{(1)}$, and this is the reason
for introducing $R_1$ (it plays a role in the explicit
computations in Chapter \ref{sec:coc}).
\subsubsection{Restriction to parabolic subsystems}\label{subsub:par}
We define $\mathfrak{a}=Y\otimes_\mathbb{Z}\mathbb{R}$.
Let $P$ be a subset of $ F_0$. We have a canonical decomposition
$\mathfrak{a}=\mathfrak{a}^{P}\oplus\mathfrak{a}_P$, where
$\mathfrak{a}^P:=P^\perp$ and $\mathfrak{a}_P:=\mathbb{R}P^\vee$.
Dually we have the decomposition
$\mathfrak{a}^*=\mathfrak{a}^{P,*}\oplus\mathfrak{a}^*_P$ where
$\mathfrak{a}^*_P=\mathbb{R}P$ and $\mathfrak{a}^{P,*}=(P^\vee)^\perp$
(in the case $P=F_0$ we will denote this decomposition by
$\mathfrak{a}^*=\mathfrak{a}^{0,*}\oplus\mathfrak{a}^*_0$).
Let $R_P\subset R_0$ be the ``parabolic subsystem of roots''
$R_P=R_0\cap\mathfrak{a}^*_P$.

Consider the root datum $\Ri^P:=(X,Y,R_P,R_P^\vee,P)$.
Let $X_P\isom X/(X\cap\mathfrak{a}^{P,*})$ be the projection of the lattice $X$ on
$\mathfrak{a}^*_P$ along $\mathfrak{a}^{P,*}$ (this lattice contains the
lattice $X\cap\mathfrak{a}^*_P$ as a sublattice of finite index).
Observe that the dual lattice $Y_P$ of $X_P$ equals $Y_P=Y\cap\mathfrak{a}_P$.
We also introduce the semisimple root datum $\Ri_P:=(X_P,Y_P,R_P,R_P^\vee,P)$.
The non-reduced root systems associated to the root data $\Ri^P$ and $\Ri_P$
are both equal to $R_{P,\mathrm{nr}}:=\mathbb{Q}R_P\cap\rnr$. We define a
label function $q_P$ on the affine Weyl group associated to $\Ri_P$
by requiring that
the corresponding root label function on $R_{P,\mathrm{nr}}$ is obtained
by restricting the root label function on $\rnr$ to $R_{P,\mathrm{nr}}$.
We define a label function $q^P$ on the affine Weyl group
associated to $\Ri_P$ in the same fashion.
\subsubsection{Bernstein presentation}
There is a second presentation of
the algebra $\H$, due to Joseph Bernstein (unpublished).
Since the length function is additive on the dominant cone
$X^+$, the map $X^+\ni x\mapsto N_x$ is a homomorphism of the
commutative monoid $X^+$ with values in $\H^\times$, the group
of invertible elements of $\H$. Thus there exists a unique
extension to a homomorphism
$X\ni x\mapsto \theta_x\in\H^\times$
of the lattice $X$ with values in $\H^\times$.

Let $\A\subset\H$ be the abelian subalgebra of $\H$ generated by
$\theta_x$, $x\in X$.
Let $\H_0=\H(W_0,q_0)\subset\H$ be the finite type Hecke algebra
associated with $W_0$ and the restriction $q_0$ of $q$
to $W_0$. Then $\H_0\subset \H$ is a subalgebra of $\H$.
The Bernstein presentation asserts that
the multiplication maps $\H_0\otimes\A\to\H$ and $\A\otimes\H_0\to\H$
are linear isomorphisms. The algebra structure of $\H$ is then completely
determined by the following cross relations (for all $x\in X$ and
$s=s_\alpha$ with $\a\in F_0$):
\begin{gather}
\begin{split}
&\theta_xN_s-N_s\theta_{s(x)}=\\
&\left\{
\begin{array}{ccc}
&(q_{\alpha^\vee}^{1/2}-
q_{\alpha^\vee}^{-1/2})\frac{\theta_x-\theta_{s(x)}}
{1-\theta_{-\alpha}}&{\rm if}\ 2\alpha\not\in\rnr.\\
&((q_{\alpha^\vee/2}^{1/2}q_{\alpha^\vee}^{1/2}-
q_{\alpha^\vee/2}^{-1/2}q_{\alpha^\vee}^{-1/2})
+(q_{\alpha^\vee}^{1/2}-
q_{\alpha^\vee}^{-1/2})\theta_{-\alpha})
\frac{\theta_x-\theta_{s(x)}}
{1-\theta_{-2\alpha}}&{\rm if}\ 2\alpha\in\rnr.\\
\end{array}
\right.\\
\end{split}
\end{gather}
\subsubsection{The center $\Ze$ of $\H$}
An immediate consequence of the Bernstein
presentation of $\H$ is the description of the
center of $\H$:
\begin{thm}\label{thm:cent}
The center of $\H$ is equal to $\A^{W_0}$.
In particular, $\H$ is finitely generated
over its center.
\end{thm}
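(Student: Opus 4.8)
The plan is to argue directly from the Bernstein presentation, essentially along the classical lines. Recall that $\H=\bigoplus_{w\in W_0}\A N_w$ is free as a left $\A$-module on the basis $(N_w)_{w\in W_0}$, and that $\H$ is generated as a $\mathbb{C}$-algebra by the $\theta_x$ $(x\in X)$ together with the $N_{s_\alpha}$ $(\alpha\in F_0)$. The only structural input beyond the cross relations is that they extend $\mathbb{C}$-linearly: for every $a\in\A$ one has $aN_{s_\alpha}-N_{s_\alpha}s_\alpha(a)=G_\alpha(a)$, where $G_\alpha(a)\in\A$ is obtained from the relevant bracketed expression in the cross relation by substituting $a-s_\alpha(a)$ for $\theta_x-\theta_{s_\alpha(x)}$. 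That $G_\alpha(a)$ really lies in $\A$ is the standard divisibility fact that $1-\theta_{-\alpha}$ divides $a-s_\alpha(a)$ in $\mathbb{C}[X]$ — and when $2\alpha\in\rnr$ even $1-\theta_{-2\alpha}$ divides it, because $2\alpha\in\rnr$ forces $\langle x,\alpha^\vee\rangle\in2\mathbb{Z}$ for all $x\in X$. Rewriting, $N_{s_\alpha}a=s_\alpha(a)N_{s_\alpha}-G_\alpha(s_\alpha(a))$.

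For the inclusion $\A^{W_0}\subseteq\Ze(\H)$: an element $z\in\A^{W_0}$ commutes with all $\theta_x$ since $\A$ is commutative, and commutes with each $N_{s_\alpha}$ because $z-s_\alpha(z)=0$ kills both $G_\alpha(z)$ and the term $N_{s_\alpha}(z-s_\alpha(z))$; as these elements generate $\H$, $z$ is central.

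For the reverse inclusion $\Ze(\H)\subseteq\A^{W_0}$: write a central $h$ as $h=\sum_{w\in W_0}a_wN_w$ with $a_w\in\A$. Iterating $N_{s_\alpha}b=s_\alpha(b)N_{s_\alpha}-G_\alpha(s_\alpha(b))$ along a reduced word for $w$ yields $N_w\theta_x=\theta_{w(x)}N_w+\sum_{l(v)<l(w)}(\ast)N_v$, with all correction terms supported on strictly shorter elements. Hence in $h\theta_x=\theta_xh$, comparing coefficients of $N_w$ for a $w$ of maximal length with $a_w\ne0$ gives $a_w\theta_{w(x)}=a_w\theta_x$ for every $x\in X$; since $\A=\mathbb{C}[X]$ is a domain and $a_w\ne0$, this forces $w(x)=x$ for all $x$, hence $w=e$ by faithfulness of the $W_0$-action on $X$. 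Therefore $h=a_e\in\A$. Finally $0=a_eN_{s_\alpha}-N_{s_\alpha}a_e=(a_e-s_\alpha(a_e))N_{s_\alpha}+G_\alpha(s_\alpha(a_e))$, and the coefficient of $N_{s_\alpha}$ gives $s_\alpha(a_e)=a_e$ for every $\alpha\in F_0$, i.e.\ $a_e\in\A^{W_0}$.

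The final assertion is then immediate: $\A=\mathbb{C}[X]$ is a finitely generated module over $\A^{W_0}=\mathbb{C}[X]^{W_0}$ (invariant theory of the finite group $W_0$), and $\H$ is a free $\A$-module of rank $|W_0|$, so $\H$ is a finitely generated $\Ze(\H)$-module. I do not anticipate a genuine obstacle here; the only points needing care are the triangularity bookkeeping for $N_w\theta_x$ and the divisibility ensuring that the operators $G_\alpha$ preserve $\A$, both routine. A more conceptual variant: localize to $K\otimes_\A\H$ with $K=\operatorname{Frac}(\A)$, where the rationally normalized intertwiners $\iota_{s_\alpha}$ identify this with the crossed product $K\rtimes W_0$; its center is $K^{W_0}$ because $W_0$ acts faithfully on $K$, and intersecting back with $\H$ (using freeness over $\A$) recovers $\A^{W_0}$.
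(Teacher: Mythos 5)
Your argument is correct and is precisely the standard Bernstein--Lusztig computation that the paper invokes without writing out (the theorem is stated as ``an immediate consequence of the Bernstein presentation,'' with the details left to \cite{Lu}, \cite{O}). Both delicate points are handled properly: the divisibility (including by $1-\theta_{-2\alpha}$ when $2\alpha\in\rnr$, using $\langle x,\alpha^\vee\rangle\in 2\mathbb{Z}$) making the operators $G_\alpha$ preserve $\A$, and the triangularity of $N_w\theta_x$ used to force $w=e$; the finite-generation conclusion via $\A$ finite over $\A^{W_0}$ and $\H$ free of rank $|W_0|$ over $\A$ is likewise the intended one.
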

As an immediate consequence we see that irreducible
representations of $\H$ are finite dimensional by an
application of (Dixmier's version of) Schur's lemma.
\subsection{Intertwining elements}
Let $s=s_\alpha\in S_0$ with $\alpha\in F_1$. Define $\i_s\in\H$ by:
\begin{equation*}
\begin{split}
\i_s&=(1-\theta_{-\alpha})N_s+
((q_{\a^\vee}^{-1/2}q_{2\a^\vee}^{-1/2}-q_{\a^\vee}^{1/2}q_{2\a^\vee}^{1/2})
+(q_{2\a^\vee}^{-1/2}-q_{2\alpha^\vee}^{1/2})\theta_{-\alpha/2})\\
&=N_s(1-\theta_{\alpha})+
((q_{\a^\vee}^{-1/2}q_{2\a^\vee}^{-1/2}-q_{\a^\vee}^{1/2}q_{2\a^\vee}^{1/2})
\theta_{\alpha}
+(q_{2\a^\vee}^{-1/2}-q_{2\alpha^\vee}^{1/2})\theta_{\alpha/2})\\
\end{split}
\end{equation*}
(where, if $\a/2\not\in X$, we put $q_{2\a^\vee}=1$).
We recall from \cite[Theorem 2.8]{EO} that these elements of $\H$ satisfy the
braid relations, and they satisfy (for all $x\in X$):
\begin{equation}
\i_s\theta_x=\theta_{s(x)}\i_s
\end{equation}

Let $\mathcal{Q}$
denote the quotient field of the center $\Ze$ of $\H$, and let
${}_\mathcal{Q}\H$ denote the
$\mathcal{Q}$-algebra ${}_\mathcal{Q}\H=\mathcal{Q}\otimes_\Ze\H$.
Inside ${}_\mathcal{Q}\H$ we normalize the elements $\i_s$
as follows. We first introduce
\begin{equation}
n_\a:=q_{\alpha^\vee}^{1/2}q_{2\alpha^\vee}^{1/2}
(1+q_{\alpha^\vee}^{-1/2}\theta_{-\alpha/2})
(1-q_{\alpha^\vee}^{-1/2}q_{2\alpha^\vee}^{-1}\theta_{-\alpha/2})\in
\A.
\end{equation}Then the normalized intertwiners $\iota_s^0\in{}_\mathcal{Q}\H$
($s\in S_0$) are defined by (with $s=s_\a$,
$\a\in R_1$):
\begin{equation}\label{eq:defint}
\i^0_s :=n_\a^{-1}\i_s\in {}_\mathcal{Q}\H.
\end{equation}
It is known that $(\i_s^0)^2=1$, and in particular
that $\i_s^0\in{}_\mathcal{Q}\H^\times$, the
group of invertible elements of ${}_\mathcal{Q}\H$.
We have:
\begin{lemma}(\cite[Lemma 4.1]{O})
The map $S_0\ni s\mapsto \i^0_s\in {}_\mathcal{Q}\H^\times$ extends
(uniquely) to a homomorphism $W_0\ni w\mapsto
\i^0_w\in {}_\mathcal{Q}\H^\times$. Moreover,
for all $f\in {}_\mathcal{Q}\A$ we have that
$\i_w^0f\i_{w^{-1}}^0=f^w$.
\end{lemma}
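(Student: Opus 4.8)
The plan is to establish the two assertions separately. For the first assertion — that $s\mapsto\i^0_s$ extends to a homomorphism on $W_0$ — I would verify that the family $(\i^0_s)_{s\in S_0}$ satisfies the braid relations of $W_0$ and that each $\i^0_s$ is an involution (the latter is recalled in the excerpt as $(\i^0_s)^2=1$). Since the unnormalized elements $\i_s$ satisfy the braid relations by \cite[Theorem 2.8]{EO}, the content is to transfer this to the normalized versions. Here I would exploit the conjugation formula $\i_s\theta_x=\theta_{s(x)}\i_s$, which shows more generally that $\i_s f=f^s\i_s$ for all $f\in{}_\Q\A$ (and hence for all $f\in{}_\Q\A$ after extending scalars, since ${}_\Q\A$ is the quotient field of $\A$ — note $\Ze=\A^{W_0}$ by Theorem \ref{thm:cent}, so ${}_\Q\H$ contains ${}_\Q\A$). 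Using this, for a reduced word $s_{i_1}\cdots s_{i_k}=s_{j_1}\cdots s_{j_k}$ one compares $n_{\a_{i_1}}^{-1}\i_{s_{i_1}}\cdots n_{\a_{i_k}}^{-1}\i_{s_{i_k}}$ with the analogous product on the right-hand side; moving all the $n^{-1}$-factors to the front through the $\i$'s (at the cost of applying Weyl group elements to them) reduces the braid relation for $\i^0$ to the braid relation for $\i$ together with the identity that the resulting products of shifted $n$-factors agree. This last identity is exactly the statement that $\prod_j n_{\a_{i_j}}$ suitably interpreted is a "cocycle" on $W_0$; concretely one checks that $w\mapsto \i^0_w\i_w^{-1}$, wherever defined, is a well-defined element of ${}_\Q\A^\times$ given by a product $\prod_{\a>0,\,w^{-1}\a<0} n_\a^{(\cdot)}$ over the inversion set — this is the standard length-additivity bookkeeping and is routine once set up.

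For the second assertion, $\i^0_w f\i^0_{w^{-1}}=f^w$ for $f\in{}_\Q\A$: since $\i^0_{w^{-1}}=(\i^0_w)^{-1}$ by the homomorphism property already proved, it suffices to show $\i^0_w f=f^w\i^0_w$. For $w=s$ a simple reflection this follows immediately from $\i_s f=f^s\i_s$ and the fact that $n_\a\in\A$ commutes with $f\in{}_\Q\A$ (the algebra ${}_\Q\A$ being commutative), so $\i^0_s f=n_\a^{-1}\i_s f=n_\a^{-1}f^s\i_s=f^s n_\a^{-1}\i_s=f^s\i^0_s$. For general $w$ one writes $w=s_{i_1}\cdots s_{i_k}$ reduced and iterates: $\i^0_w f=\i^0_{s_{i_1}}\cdots\i^0_{s_{i_k}}f=\i^0_{s_{i_1}}\cdots f^{s_{i_k}}\i^0_{s_{i_k}}=\cdots=f^{s_{i_1}\cdots s_{i_k}}\i^0_w=f^w\i^0_w$.

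The main obstacle is the uniqueness and consistency of the extension in the first assertion — i.e., checking that the assignment is genuinely well-defined on all of $W_0$ and not merely on reduced expressions. The delicate point is not the involution relations $(\i^0_s)^2=1$ (given) but the braid relations $\underbrace{\i^0_{s}\i^0_{s'}\cdots}_{m_{ss'}}=\underbrace{\i^0_{s'}\i^0_{s}\cdots}_{m_{ss'}}$ for non-commuting pairs with $m_{ss'}\in\{3,4,6\}$, because the normalizing factors $n_\a$ involve the $\theta_{-\a/2}$ and the root labels in a way that interacts nontrivially with the Weyl group action. I expect that after clearing denominators and using $\i_s f=f^s\i_s$ to collect all $n$-factors, the braid relation for $\i^0$ becomes equivalent to a polynomial identity among products of the $n_\a$'s over the two sides of the braid relation in $R_1$; verifying that this identity holds — equivalently, that the "inversion-set product" formula for $\i^0_w$ in terms of $\i_w$ and the $n_\a$ is independent of the chosen reduced word — is the crux, and it follows from the $W_0$-invariance of $a\mapsto q_a$ together with the explicit form of $n_\a$. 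Once this is in place, uniqueness is automatic since $S_0$ generates $W_0$.
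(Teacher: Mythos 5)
The paper does not prove this lemma; it simply quotes it from \cite[Lemma 4.1]{O}. Your reconstruction is correct and is the standard argument of the cited source: the quadratic relations $(\i_s^0)^2=1$ plus the braid relations (obtained from those of the $\i_s$ by pulling the $n_\a^{-1}$'s to the front via $\i_s f=f^s\i_s$, and using that $(n_\a)^u=n_{u(\a)}$ by $W_0$-invariance of the labels, so that the prefactor is the reduced-word--independent inversion-set product $\prod_{\b\in R_{1,+}\cap wR_{1,-}}n_\b^{-1}$) give the extension to $W_0$ by Matsumoto's theorem, and the conjugation property then follows by induction on the length from the rank-one case.
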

\section{The Fourier transform for affine Hecke algebras}
Recall the canonical decomposition
$\mathfrak{a}^*=\mathfrak{a}^{0,*}\oplus\mathfrak{a}^*_0$.
Then $X\cap \mathfrak{a}^{0,*}$ consist of translations of length $0$
in the affine Weyl group. Choose a norm $\Vert\cdot\Vert$ on
$\mathfrak{a}^{0,*}$. Let us denote by $x^0$ the projection of
$x\in X$ onto $\mathfrak{a}^{0,*}$ along $\mathfrak{a}^*_0$.
Then we define a norm $\mathcal{N}$ on $W$ by
\begin{equation}
\Nc(w):=l(w)+\Vert w(0)^0 \Vert.
\end{equation}
\begin{dfn}
The Schwartz completion $\S$ of $\H$ is the
vector space of the formal complex linear
combinations $\sum_{w\in W}c_wN_w$ for which the function
$W\ni w\to c_w$ is rapidly decreasing with respect to the norm
$\Nc$ defined above on $W$, equipped
with the usual Fr\'echet topology on the space of rapidly
decreasing functions on $W$.
\end{dfn}
Recall the following result:
\begin{thm}(\cite[Theorem 6.5]{O})
The algebra structure on the dense subspace $\H\subset\S$ extends
uniquely to a Fr\'echet algebra structure on $\S$.
\end{thm}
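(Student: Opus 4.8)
The plan is to extend the product of $\H$ to $\S$ by continuity. Write $p_n(\sum_w c_wN_w)=\sup_{w\in W}(1+\Nc(w))^n|c_w|$ for the seminorms defining the Fr\'echet topology of $\S$. Since $\H$ is dense in $\S$ and $\S$ is complete, everything reduces to a single estimate: for each $n\in\N$ there are $m\in\N$ and $C>0$ with $p_n(ab)\le C\,p_m(a)p_m(b)$ for all $a,b\in\H$. Granting this, the multiplication extends uniquely to a jointly continuous bilinear map $\S\times\S\to\S$, and associativity, the unit $N_e$, and the canonical $*$-operation all pass to $\S$ by density and continuity. So the whole problem is to prove that bilinear estimate for $a,b\in\H$.

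Write $N_uN_v=\sum_w c^w_{u,v}N_w$, so that $(ab)_w=\sum_{u,v}a_ub_vc^w_{u,v}$. I would base the estimate on two properties of the structure constants. The first is a \emph{subadditivity} property: if $c^w_{u,v}\neq 0$ then $\Nc(w)\le\Nc(u)+\Nc(v)$. To prove it, fix a reduced expression $u=\om s_1\cdots s_p$ with $\om$ of length $0$, $s_i\in S^{\mathrm{aff}}$ and $p=l(u)$, and expand $N_uN_v=N_\om N_{s_1}\cdots N_{s_p}N_v$ by applying relation (2) repeatedly from the right; every $N_w$ occurring then has $w=\om s_{i_1}\cdots s_{i_k}v$ for some $1\le i_1<\cdots<i_k\le p$, whence $l(w)\le l(u)+l(v)$. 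Next, using $W=W_0\ltimes X$ and that $W_0$ fixes $\af^{0,*}=(F_0^\vee)^\perp$ pointwise and preserves $\af^*_0=\R F_0$, the map $w\mapsto w(0)^0\in\af^{0,*}$ is a group homomorphism, and it is trivial on the subgroup generated by the affine simple reflections (each such reflection has translation part in $\R F_0=\af^*_0$). Hence $w(0)^0=u(0)^0+v(0)^0$ for every $w$ in the support of $N_uN_v$, so $\Vert w(0)^0\Vert\le\Vert u(0)^0\Vert+\Vert v(0)^0\Vert$; adding the two inequalities gives $\Nc(w)\le\Nc(u)+\Nc(v)$.

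The second property is the one I expect to be the real obstacle: a \emph{polynomial bound} on the structure constants, say $|c^w_{u,v}|\le C_0\,(1+\Nc(u)+\Nc(v))^{d_0}$ for all $u,v,w$, with $C_0,d_0$ depending only on $\H$. The naive approach — expanding $N_{s_1}\cdots N_{s_p}N_v$ as above — only yields an \emph{exponential} bound ($c^w_{u,v}$ is a priori a sum of up to $2^{l(u)}$ monomials of degree $\le l(u)$ in the scalars $q_s^{1/2}-q_s^{-1/2}$), and so does the Hilbert-space estimate $|c^w_{u,v}|=|\langle N_uN_v,N_w\rangle|\le\Vert N_u\Vert_{C^*_r(\H)}$, since $\Vert N_u\Vert_{C^*_r(\H)}$ grows exponentially in $\Nc(u)$ in general; neither is good enough. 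To get a polynomial bound I would exploit the Bernstein presentation $\H\cong\H_0\otimes\A$: reduce to products of the form $\theta_xN_{v_1}\cdot\theta_yN_{v_2}$ with $v_i\in W_0$, commute the $\theta$'s past the $N_{v_i}$'s by the Bernstein cross relations, and observe that — $W_0$ being finite — only boundedly many such commutations occur, each one replacing a single $\theta_z$ by at most $c(1+\Vert z\Vert)$ of them with uniformly bounded coefficients, while the translation parts merely add. Controlling this combinatorics precisely, and checking the uniformity in the label function $q$, is the step that I expect to require the most care.

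Granting the two properties, the bilinear estimate is routine. In $p_n(ab)\le\sup_w(1+\Nc(w))^n\sum_{u,v}|a_u||b_v||c^w_{u,v}|$ the sum runs over pairs with $\Nc(u)+\Nc(v)\ge\Nc(w)$, hence $\max(\Nc(u),\Nc(v))\ge\Nc(w)/2$; by symmetry it suffices to treat the terms with $\Nc(u)\ge\Nc(w)/2$. Insert $|a_u|\le p_m(a)(1+\Nc(u))^{-m}$ and $|b_v|\le p_m(b)(1+\Nc(v))^{-m}$ together with the polynomial bound on $c^w_{u,v}$, split the surviving power of $(1+\Nc(u))^{-1}$ in half and bound one half by $(1+\Nc(w)/2)^{-(m-d_0)/2}$, and use that $\sum_{w\in W}(1+\Nc(w))^{-s}<\infty$ once $s$ is large enough (the number of $w$ with $\Nc(w)\le R$ grows polynomially in $R$). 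Choosing $m$ with $(m-d_0)/2\ge n+1$ makes $\sup_w(1+\Nc(w))^n(1+\Nc(w)/2)^{-(m-d_0)/2}$ finite, and one arrives at $p_n(ab)\le C\,p_m(a)p_m(b)$. Uniqueness of the extension, and the remaining algebra and $*$-axioms on $\S$, are then immediate from density.
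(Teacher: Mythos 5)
This theorem is imported from \cite[Theorem 6.5]{O}; the present paper gives no proof of it, so your attempt has to be measured against the argument in that reference. Your overall architecture is the right one: reduce everything to the bilinear seminorm estimate $p_n(ab)\le C\,p_m(a)p_m(b)$ on $\H$, prove the support condition $\Nc(w)\le\Nc(u)+\Nc(v)$ for the structure constants, and combine it with the polynomial growth of $(W,\Nc)$ and a polynomial bound on $|c^w_{u,v}|$. Your proof of the support condition is complete and correct (the subword property handles the length part, and the observation that $w\mapsto w(0)^0$ is a homomorphism vanishing on the subgroup generated by $S^{\mathrm{aff}}$ handles the central part), and the final assembly from the two properties is sound.

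The problem is that the polynomial bound $|c^w_{u,v}|\le C_0(1+\Nc(u)+\Nc(v))^{d_0}$ \emph{is} the analytic content of the theorem (it is even a formal consequence of the statement, applied to $a=N_u$, $b=N_v$), and you do not prove it: you name it as the obstacle and point at the right tool, the Bernstein presentation, but the sketch has a genuine hole. Commuting $\theta$'s past the boundedly many $N_s$ with $s\in S_0$ does control products of elements \emph{already written} in the basis $\theta_xN_v$ ($v\in W_0$), each commutation expanding $(\theta_x-\theta_{s(x)})/(1-\theta_{-\alpha})$ into $O(\Vert x\Vert)$ monomials with bounded coefficients. But to say anything about the $c^w_{u,v}$ you must in addition show that the change of basis between $\{N_w\}_{w\in W}$ and $\{\theta_xN_v\}_{x\in X,\,v\in W_0}$ is itself polynomially bounded (polynomially many terms, polynomially bounded coefficients) \emph{in both directions}, and this is exactly where the difficulty you are trying to avoid reappears: for non-dominant $x$ one has $\theta_x=N_{t_{x_1}}N_{t_{x_2}}^{-1}$ with $x=x_1-x_2$, $x_i$ dominant, and the naive expansion of $N_{t_{x_2}}^{-1}=\prod_i\bigl(N_{s_i}-(q(s_i)^{1/2}-q(s_i)^{-1/2})\bigr)$ has $2^{l(t_{x_2})}$ terms. (Incidentally, your parenthetical claim that $\Vert N_u\Vert_{C^*_r(\H)}$ grows exponentially in $\Nc(u)$ is not correct --- it is polynomially bounded, by the uniform bound on the dimensions of tempered irreducibles together with the Casselman-type weight estimates --- but proving that bound independently of the theorem is of essentially the same difficulty, so your instinct not to rely on it is reasonable.) Until the quantitative control of the structure constants is actually established, the proof is incomplete; supplying exactly this estimate is what \cite[Section 6.1]{O} does.
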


Let us now review the notions involved in the definition of
two of the main ingredients involved in the description of the
structure of the Fr\'echet algebra $\S$, the groupoid $\W_{\Xi_u}$
of standard tempered induction data, and the ``induction intertwining
functor $\pi$'' defined on this groupoid. Both these structures
arise from the $L_2$-theory of the Hecke algebra.
\subsection{Tempered representations}
An affine Hecke algebra with a positive label function
comes equipped with the structure of a $*$-algebra, where $*$ denotes
the unique anti-linear anti-involution
defined by anti-linear extension of the map $N_w^*:=N_{w^{-1}}$.
Moreover, the linear functional $\tau$ defined by $\tau(N_w)=\delta_{w,e}$
is a positive trace with respect to $*$. The star operation $*$ and
trace $\tau$ together define a unique Hilbert algebra structure on $\H$
(see \cite{O}) which is the origin for the harmonic analysis on $\H$.

We define a positive definite Hermitian inner product on $\H$
by $(x,y):=\tau(x^*y)$, and denote by $L_2(\H)$ the Hilbert space
completion of $\H$. It is the separable Hilbert space in which the
basis elements $N_w$ ($w\in W$) form a Hilbert basis. We have
\begin{equation}
\H\subset\S\subset L_2(\H),
\end{equation}
and the second inclusion is easily seen to be continuous.

A representation $\pi$ of $\H$ which is of finite length is called
{\it tempered} if $\pi$ extends continuously to $\S$. It is in
fact sufficient that the character of $\pi$ (recall that all
representations of finite length of $\H$ are finite dimensional)
extends continuously to $\S$ (cf. \cite[Lemma 2.20]{O}).
An {\it irreducible} representation
$\pi$ of $\H$ is called a {\it discrete series} representation if
$\pi$ extends continuously to $L_2(\H)$. An equivalent way of
saying this is that the character $\chi_\pi$ of $\pi$ extends to
a continuous functional on $L_2(\H)$ (cf. \cite[Lemma 2.22]{O}).
Thus a discrete series module is in particular a tempered module.

Our main interest in this paper will be the description of the
structure of the tempered dual $\hat{\S}$ of $\H$, the set of
equivalence classes of irreducible tempered representations.
It is known (cf. \cite[Theorem 3.11, Theorem 3.19, Theorem 4.3]{DO}
and \cite[Theorem 2.25]{O}) that this set
of irreducible representations extends to the $C^*$-algebra
completion $C^*_r(\H)$ of $\H$, and that one obtains in this way
precisely the irreducible spectrum of $C^*_r(\H)$.
We equip $\hat{\S}$ with the topology of the spectrum of
$C^*_r(\H)$ via this identification.
\subsection{The groupoid of standard induction data}
\subsubsection{Induced representations}\label{par:inddat}
The affine Hecke algebra with root datum $\Ri^P$ and label function
$q^P$ is naturally embedded as a subalgebra of $\H$, as is apparent from
Bernstein's presentation.
The affine Hecke algebra with root datum $\Ri_P$ and label
function $q_P$ is isomorphic to the quotient of $\H^P$ by the central
subalgebra $\A^P\subset\H^P$ generated by the $\theta_x$ with
$x\in X\cap\mathfrak{a}^{P,*}$.
In particular for $\H_P$ the r\^ole of the complex
algebraic torus $T$ of (quasi) characters of $X$ is now played
by the algebraic subtorus $T_P\subset T$ with character lattice
$X_P$. The central characters of the irreducible
modules over $\H_P$ are $W_P$-orbits in $T_P$.

Let $T^P\subset T$ denote the complex algebraic subtorus
with character lattice $X^P=X/(X\cap\mathfrak{a}^*_P)$. This is the
identity component of the group of fixed points for the action
of $W_P$ on $T$. The group $T^P$ acts naturally on $\H^P$ by
automorphisms. We send $t\in T^P$ to the automorphism $\psi_t$ of
$\H^P$ which acts on the Bernstein basis by
$\theta_xN_w\to t(x)\theta_xN_w$. Given a
discrete series representation $\d$ of $\H_P$ we denote by $\d_t$
the twist $\d_t=\d\circ p\circ\psi_t$ by $t\in T^P$ of the lift of $\d$ to $\H^P$ via the
natural quotient map $p:\H^P\to\H_P$.
Define the finite abelian group
\begin{equation}\label{eq:KP}
K_P:=T^P\cap T_P\approx\textup{Hom}(X_P/(X\cap\mathfrak{a}^*_P),\mathbb{C}^\times)
\end{equation}
For later use we observe that if $k\in K_P$
then the twist by $k$ descends to an automorphism of $\H_P$, and we have
$\d_{tk}=(\d_k)_t$, where $\d_k(=\d^{k^{-1}})$ is the twist of $\d$
by the automorphism of $\H_P$ coming from $k$, which is again
a discrete series representation of $\H_P$.

Choose a complete set of representatives $\Delta_P=\Delta_{\Ri_P,q_P}$ for
the set of isomorphism classes of discrete series representations
of the Hecke algebra $\H_P$.
This set is finite \cite[Lemma 3.31]{O}.
We put $\Delta$ for the finite
disjoint union $\Delta:=\coprod_{P\in\P}(P,\Delta_P)$, a finite set with
a natural fibration $\Delta\to\P$
(with $\P$ the power set of $F_0$).

We will use some terminology from the theory of groupoids.
Recall that a groupoid
$\G$ is a ``group with several objects'' or more formally, a small
category in which all the morphisms are invertible. In particular a
group $G$ is a groupoid with one object. On the other extreme end any
set $X$ can be viewed as a groupoid with only identities, the
``identity groupoid'' of $X$.

A standard induction datum $\xi$ for $\H$ is a triple $(P,\d,t)$
with $P\in\P$, $\d\in\Delta_P:=\Delta_{\Ri_P,q_P}$, and $t\in T^P$.
Recall that
$\d$ is a representative of an equivalence class of discrete
series representations. Let us denote the underlying vector space
by $V_\d$.
The set $\Xi$ of all such triples is a finite (by
\cite[Lemma 3.31]{O}) disjoint union of the subsets $\Xi_{(P,\d)}$,
each of which is a copy of the complex algebraic torus $T^P$.
We view $\Xi$ as the set of arrows of a
groupoid whose set of objects is $\Delta$, with
$\operatorname{Hom}((P,\d),(Q,\tau))=\Xi_{(P,\d)}$ if $(P,\d)=(Q,\tau)$
and $=\emptyset$ else. We identify $\Xi_{(P,\d)}$ with the complex
algebraic torus $T^P$ by $T^P\ni t\to\xi=(P,\d,t)\in\Xi_{(P,\d)}$.
This equips $\Xi$ in particular with the structure of a complex
algebraic variety. We denote by $\Xi_u\subset\Xi$ the compact real
form of $\Xi$ (i.e. we restrict $t$ to the compact real form
$T^P_u\subset T^P$). Given an
induction datum $\xi=(P,\d,t)$ we can define an {\it induced
representation}
$\pi(\xi)$ of $\H$ by inducing $\d_t$ from $\H^P$ to $\H$
(see \cite[Paragraph 4.5.1]{O}; \cite[Subsection 3.5]{DO}).
The representation is realized in the vector space
\begin{equation}
V_\xi=\H\otimes_{\H^P}V_\d=:i(V_\d)
\end{equation}
which is {\it independent of $t\in T^P$} (the ``compact realization'').
The matrix coefficients of $\pi(\xi)$ are regular functions
on $\Xi$. The representations $(\pi(\xi),V_\xi)$ are called
{\it generalized principal series} representations.
\begin{prop}(\cite[Proposition 4.19, Proposition 4.20]{O})
The generalized principal series $\pi(\xi)$ is tempered if
$\xi\in\Xi_u$, and it is unitary for $\xi\in \Xi_u$ with
respect to a standard inner product on $i(V_\d)$ which is
independent of $\xi$.
\end{prop}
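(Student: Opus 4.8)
The plan is to prove the three assertions separately: a $t$-independent realization of $i(V_\d)$, unitarity on it, and temperedness. For the first I would use the Bernstein presentation. From $\H=\H_0\otimes\A$ and $\H^P=\H(W_P)\otimes\A$ one gets that $\H$ is a free right $\H^P$-module with basis $\{N_w:w\in W^P\}$, where $W^P$ is the set of minimal length representatives of $W_0/W_P$; hence $i(V_\d)=\H\otimes_{\H^P}V_\d=\bigoplus_{w\in W^P}N_w\otimes V_\d$ as a vector space, manifestly independent of $t$ (the ``compact realization''). Since $\psi_t$ fixes every $N_w$, it restricts to the identity on $\H(W_P)$, so $\d_t|_{\H(W_P)}=\d\circ p|_{\H(W_P)}$ is independent of $t$; consequently $i(V_\d)\cong\H_0\otimes_{\H(W_P)}V_\d$ and $\pi(\xi)|_{\H_0}=\operatorname{Ind}_{\H(W_P)}^{\H_0}(\d|_{\H(W_P)})$ is a $t$-independent module over the finite type Hecke algebra $\H_0$. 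I put on the compact realization the inner product for which the summands $N_w\otimes V_\d$ are mutually orthogonal and the restriction to each summand is the (fixed) discrete series inner product of $V_\d$; once the pair $(P,\d)$ is fixed this inner product does not depend on $t$, hence not on $\xi$.

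For unitarity, first observe that $\d$ is a $*$-representation of $\H_P$ (being discrete series), and that for $t\in T^P_u$ the automorphism $\psi_t$ of $\H^P$ is a $*$-automorphism, so $\d_t$ is a unitary $*$-representation of $\H^P$; inducing along $\H^P\subset\H$ via a conditional expectation $\H\to\H^P$ then already shows that $\pi(\xi)$ admits a positive invariant Hermitian form (this form, however, depends on $t$). To see that the $\xi$-independent compact inner product above also works, note that $\H_0=\bigoplus_{w\in W^P}N_w\H(W_P)$ is $\tau$-orthogonal (the $N_{wu}$, $w\in W^P$, $u\in W_P$, are distinct basis elements), so the inner product on $i(V_\d)$ induced by the conditional expectation $\H_0\to\H(W_P)$ is exactly the compact one, and therefore $\pi(\xi)|_{\H_0}$ is $*$-preserving for it. Since $\H$ is generated by $\H_0$ together with $\A$, it remains to check that $\pi(\xi)(\theta_x)^*=\pi(\xi)(\theta_x^*)$ for $x\in X$; this I would verify by computing the matrix of $\pi(\xi)(\theta_x)$ on $\bigoplus_{w\in W^P}N_w\otimes V_\d$ from the cross relations of the Bernstein presentation (or from the intertwining elements $\i^0_w$), and using the hypothesis $|t(x)|=1$ together with the unitarity of $\d$. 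Positive definiteness of the compact inner product is built into the construction (the required non-degeneracy also follows from $\pi(\xi)$ being isomorphic to its Hermitian dual, as $\psi_t$ is a $*$-automorphism for $t\in T^P_u$).

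For temperedness I would use the Casselman-type criterion for affine Hecke algebra modules (cf.\ \cite{O}): a finite dimensional $\H$-module is tempered iff each of its $\A$-weights $\lambda\in T$ satisfies $|\lambda(x)|\le1$ for all $x\in X^+$, i.e.\ $\log|\lambda|$ is a non-positive real combination of the simple coroots (modulo the $W_0$-invariant part of $\mathfrak{a}$). By the Mackey/``geometric lemma'' description of an induced module, the $\A$-weights of $\pi(\xi)=\operatorname{Ind}_{\H^P}^{\H}\d_t$ are the $W^P$-translates of the $\A$-weights of $\d_t$, and the latter are of the form $r\cdot t$ with $r$ an $\A_P$-weight of $\d$. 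Since $t\in T^P_u$ we have $\log|t|=0$, so $\log|r\cdot t|=\log|r|$, and $\d$ being discrete series forces $\log|r|$ to be a strictly negative combination of the simple coroots in $P$, hence $\log|r|\in\mathfrak{a}_P$. As each $w\in W^P$ maps the simple roots in $P$ into $R_0^+$, the element $w(\log|r|)$ is a non-positive combination of positive coroots, hence a non-positive combination of simple coroots; so every $\A$-weight $\lambda$ of $\pi(\xi)$ satisfies $|\lambda(x)|\le1$ for $x\in X^+$, and $\pi(\xi)$ is tempered.

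The main obstacle is the identity $\pi(\xi)(\theta_x)^*=\pi(\xi)(\theta_x^*)$ with respect to the fixed compact inner product: this is precisely the point at which unitarity of $t$ is used, and it requires controlling the interaction of the $*$-operation with the Bernstein elements $\theta_x$ accurately enough to compute the action of $\theta_x$ on $\bigoplus_{w\in W^P}N_w\otimes V_\d$. By contrast, once the Casselman-type criterion and the weight description of induced modules are granted, the temperedness statement reduces to the elementary fact about cones of coroots recorded above.
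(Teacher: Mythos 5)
The paper itself offers no proof of this proposition: it is imported verbatim from \cite[Propositions 4.19, 4.20]{O}, so your reconstruction can only be measured against the standard argument there. Your temperedness argument is essentially that argument and is correct in outline: the compact realization $\bigoplus_{w\in W^P}N_w\otimes V_\d$ follows from $\H=\H_0\otimes\A$ and $\H_0=\bigoplus_{w\in W^P}N_w\H(W_P)$; the generalized $\A$-weights of the induced module are the $W^P$-translates of those of $\d_t$; the Casselman criteria for discrete series (weights of $\d$ strictly antidominant in $\mathfrak{a}_P$) and for temperedness, together with $w(P)\subset R_{0,+}$ for $w\in W^P$ and $|t|\equiv 1$, then give $|\lambda(x)|\le 1$ for all weights $\lambda$ and $x\in X^+$. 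This is the right proof and I have no objection to it.

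The unitarity half, however, stops exactly where the work begins. You correctly identify the standard inner product (the one induced by the trace-orthogonal projection $E_P:\H\to\H^P$, for which $E_P(N_{w^{-1}}N_{w'})=\delta_{w,w'}$ and hence the summands $N_w\otimes V_\d$ are orthogonal), and you correctly reduce $*$-invariance to the generators; but the decisive claim $\pi(\xi)(\theta_x)^*=\pi(\xi)(\theta_x^*)$ is only announced as something you ``would verify'' from the cross relations. That computation is not routine in the compact realization (the matrix of $\theta_x$ in the basis $N_w\otimes V_\d$ is complicated), and as written the proof of unitarity is incomplete. There is also a cleaner route that you half-notice and then set aside: the pairing $\langle a\otimes u,b\otimes v\rangle:=\langle u,\d_t(E_P(a^*b))v\rangle$ is \emph{automatically} Hermitian and $\H$-invariant for every $t\in T^P_u$, because $E_P$ is a $*$-preserving $\H^P$-bimodule map and $\d_t$ is a $*$-representation when $|t|=1$; the only thing left to prove is that this pairing equals the $t$-independent standard one, which is exactly the identity $E_P(N_{w^{-1}}N_{w'})=\delta_{w,w'}$ that you already use for $\H_0$. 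Your parenthetical remark that the conditional-expectation form ``depends on $t$'' is the misstep: it does not, and recognizing this closes the argument without ever computing the action of $\theta_x$. I would either carry out that identification or supply the $\theta_x$ computation; as it stands the unitarity assertion is not proved.
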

\subsubsection{The groupoid of standard induction
data}\label{subsub:groupoid}
We now describe the morphisms of standard induction data.
Recall the Weyl groupoid $\Wf$ which has the collection of
standard parabolic subsets $P\subset F_0$ as set of objects, with
arrows $\Wf_{P,Q}:=\{w\in\Wf|w(P)=Q\}$ between two standard
parabolic subsets $P,Q$ (see the Appendix
\ref{app:weylgroupoid} for some important notions related to
$\Wf$).
If $w\in\Wf_{P,Q}$ then there exists a corresponding isomorphism
of root data $\Ri_P\to\Ri_Q$ compatible with the root labels $q_P$
and $q_Q$, thus defining an isomorphism $\psi_w:\H_P\to\H_Q$. On
the other hand, we have already seen above that with $k\in K_Q$
there is associated a twist $\psi_k$ of $\H_Q$. We define
a groupoid $\W$ whose set of objects is $\P$ and
$\W_{P,Q}:=K_Q\times\Wf_{P,Q}$ with the obvious composition rule
$(k\times u)\circ(l\times v)=k(u(l))\times uv$. We also introduce
the ``normal subgroupoid'' $\mathcal{K}\subset \mathcal{W}$ whose
objects are $\mathcal{P}$, with $\mathcal{K}_{P,Q}=\emptyset$ if
$P\not=Q$ and $\mathcal{K}_{P,P}=K_P$. Hence the Weyl groupoid
$\Wf$ is equal to the quotient $\Wf=\W/\K$.

For each $g=k\times u\in\W$ we define an
isomorphism $\psi_{g}:\H_P\to\H_Q$ by $\psi_g=\psi_{k\times
u}:=\psi_k\circ\psi_u$.
There is a natural action of $\W$ on the space $\Xi$ by (with
$g=k\times u\in K_Q\times\Wf_{P,Q}$):
\begin{equation}\label{eq:actgro}
g(P,\d,t):=(u(P),\d^g,g(t)),
\end{equation}
where $\d^g\in\Delta_Q$ is the unique discrete series
representation such that $\d^g\simeq\d\circ\psi_g^{-1}$.
\begin{dfn}
We define the {\it groupoid $\W_\Xi$ of
standard induction data} by $\W_\Xi:=\W\times_\P\Xi$. Its set of
objects is $\Xi$, and the morphisms in $\W_\Xi$ from $\xi\to\eta$
are the $g\in\W$ such that $g(\xi)=\eta$. The full subgroupoid
$\W_{\Xi_u}$ is obtained by restricting the set of objects to
$\Xi_u\subset\Xi$.
\end{dfn}
Notice that the groupoid $\W_\Xi$ is canonically determined
by $\H$. In particular $\W_\Xi$ is independent of the chosen
representatives of the isomorphism classes of discrete series
representations $\Delta$.

Given a morphism $g=k\times u\in\W_{P,Q}$ in the groupoid
and a discrete series representation $\d\in \Delta_P$,
we now {\it choose} an isomorphism
\begin{equation}\label{eq:choice}
\tilde\d_g:V_\d\to V_{\d^g}
\end{equation}
intertwining the irreducible
representations $\d\circ\psi_g^{-1}$ and $\d^g$.
Given $\xi=(P,\d,t)\in\Xi_(P,\d)$
we will define a normalized intertwining operator
\begin{equation}\label{eq:normintdefn}
\pi(g,\xi):(\pi(\xi),i(V_\d))\to (\pi(g\xi),i(V_{\d^g}))
\end{equation}
under certain regularity conditions on $\xi$ (see the discussion below;
for further detail we refer to\cite[Section 4.4]{O} and to
\cite[equation (3.8)]{DO}).
The definition is complicated since it involves the
intertwining elements $\i_{u^{-1}}^0\in{}_\mathcal{Q}\H$,
which act in a representation $\pi(\xi)$ only if $\xi$ is such
that the poles of the intertwining elements are avoided.
In \cite[Section 4.4]{O} the normalized intertwining operators $\pi(g,\xi)$
are first defined algebraically in the Zariski open
set of the so-called $R_P$-generic elements $\xi\in\Xi_{(P,\d)}$, and afterwards
extended to a larger open set in the analytic topology containing
$\Xi_{(P,\d),u}$, using the unitarity of the $\pi(g,\xi)$.
The element $\xi=(P,\delta,t)\in\Xi_{(P,\d),u}$ is called $R_P$-generic if the orbit
$W_Pr_\delta t\subset T$ consists of $R_P$-generic elements in
the sense of
\cite[Definition 4.12]{O}, where $W_Pr_\delta\subset T_P$ denotes the
central character of $\delta$. The set of such $R_P$-generic $\xi$ is
Zariski-open in $\Xi_{(P,\d)}$, and for $\xi$ in this set we can
define the normalized intertwining operator on $\pi(\xi)$ by the formula
\begin{equation}\label{eq:normint2}
\pi(g,\xi)(N_w\otimes v)=
\pi(u(\xi),N_w)\pi(u(\xi),\i_{u^{-1}}^0)
(1\otimes\tilde\d_g(v)),
\end{equation}
(see \cite[Section 4.4]{O}, or \cite[Section 3.5]{DO}).
However, in a suitable open neighborhood of $\Xi_{(P,\d),u}$ the apparent poles
of the normalized intertwining operators turn out to be removable
(see \cite[subsection 4.4]{O}). Hence we can uniquely extend the $\pi(g,\xi)$
to a smooth family of operators depending on $\xi$ in a suitable open
neighborhood of $\Xi_{(P,\d),u}$.
The normalized intertwining operators on $\Xi_u$ we use in the present
paper are the restrictions to $\Xi_u$ of
these regular rational functions of $\xi$ defined in a neighbourhood of
$\Xi_u\subset \Xi$. They
are in fact unitary for $\xi\in \Xi_u$ with respect to the standard
inner products on $i(V_\xi)$ and $i(V_{g(\xi)})$ (cf. \cite[Proposition 4.19]{O}).
In this way we have obtained the following result
(see \cite[Theorem 3.38]{O} and \cite[Theorem 3.14]{DO}):
\begin{thm}\label{thm:indint}
The assignment $\Xi_{(P,\d),u}\ni\xi\to\pi(\xi)$ and
$\W_{P,Q}\ni g\to\pi(g,\xi)$ extends to a functor $\pi$ (the
``induction intertwining'' functor) from
$\W_{\Xi_u}$ to $\mathbb{P}\operatorname{Rep}(\H)_{temp,unit}$, the
category of tempered, unitary modules of $\H$ in which the
morphisms are unitary $\H$-intertwiners modulo scalars. The
functor $\pi$ is rational and regular in $\xi\in\Xi_u$.
\end{thm}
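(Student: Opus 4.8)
The plan is to verify functoriality and the claimed regularity by checking the groupoid axioms on the two generating structures separately and then combining them. First I would recall that the assignment on objects is already given (the compact realization $i(V_\d)$, which is independent of $t$), so the only real content is on arrows: one must show that $\xi\mapsto\pi(g,\xi)$, defined by \eqref{eq:normint2} on the Zariski-open locus of $R_P$-generic $\xi\in\Xi_{(P,\d),u}$, extends to a smooth (in fact rational and regular) family over a neighbourhood of $\Xi_u$, is unitary on $\Xi_u$, and satisfies the cocycle relation $\pi(gh,\xi)=\pi(g,h\xi)\circ\pi(h,\xi)$ together with $\pi(1_P,\xi)=\mathrm{id}$.

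For the extension step I would argue as in \cite[Section 4.4]{O}: the operator in \eqref{eq:normint2} is built from $\pi(u(\xi),N_w)$, which is a regular function of $\xi$ everywhere, and from $\pi(u(\xi),\i^0_{u^{-1}})$, whose only possible singularities come from the zeros of the normalizing elements $n_\a$ along the torus. One shows these apparent poles are removable near $\Xi_u$ by a two-step argument: algebraically, the product $\i^0_{u^{-1}}$ lies in ${}_\mathcal{Q}\H$ and its poles in $\Xi_{(P,\d)}$ are controlled by an explicit divisor disjoint from the $R_P$-generic locus; analytically, the unitarity of $\pi(g,\xi)$ for $R_P$-generic $\xi\in\Xi_u$ (from \cite[Proposition 4.19]{O}) forces the rational family to be bounded, hence holomorphic, in a neighbourhood of $\Xi_u$, so it extends uniquely by the Riemann extension theorem to a rational function regular there. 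Unitarity on all of $\Xi_u$ then follows by continuity from the $R_P$-generic (dense) case. The composition law is first checked on the $R_P$-generic locus, where it reduces to the braid relations for the $\i^0_s$ (the Lemma attributed to \cite[Lemma 4.1]{O}) together with the transitivity $\widetilde{\d^h}_g\circ\tilde\d_h=\lambda\,\widetilde{\d}_{gh}$ up to a scalar $\lambda\in\mathrm{U}(1)$ coming from Schur's lemma (since the $\tilde\d_g$ are only chosen up to scalars, which is exactly why the target category $\mathbb{P}\mathrm{Rep}$ is projective); by analytic continuation the relation then holds on the whole neighbourhood, hence on $\Xi_u$.

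The main obstacle will be the removability of the poles of the intertwining operators in a full neighbourhood of $\Xi_u$ — i.e. making precise that the rational family defined by \eqref{eq:normint2} has no singularities on $\Xi_u$ itself, not merely on the $R_P$-generic sublocus. The delicate point is that some zeros of the denominator $n_\a$ (coming from $1+q_{\a^\vee}^{-1/2}\theta_{-\a/2}$ and its companion factor) do meet $\Xi_u$, and one has to see that $\i_s$ vanishes to matching order there so that $\i^0_s=n_\a^{-1}\i_s$ stays regular; this is where one genuinely invokes the results of \cite[Section 4.4]{O} rather than reproving them. Granting that input, the functoriality is then a formal consequence of the braid relations and analytic continuation, and the rationality/regularity assertion is immediate from the construction. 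I would therefore organise the write-up as: (1) recall the algebraic definition on the $R_P$-generic locus and the braid relations; (2) invoke \cite[Section 4.4]{O} for the removable-singularity extension and \cite[Proposition 4.19]{O} for unitarity; (3) verify the cocycle identity generically and extend by continuity; (4) conclude that $\pi$ is a functor to $\mathbb{P}\mathrm{Rep}(\H)_{temp,unit}$, rational and regular in $\xi$. \qed
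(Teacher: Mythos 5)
Your proposal follows essentially the same route as the paper, which itself presents this theorem as a recollection of \cite[Theorem 3.38]{O} and \cite[Theorem 3.14]{DO}: algebraic definition of $\pi(g,\xi)$ on the $R_P$-generic locus via (\ref{eq:normint2}), removal of the apparent poles in a neighbourhood of $\Xi_u$ using unitarity, and functoriality up to scalars from the braid relations for the $\i^0_s$ together with the choice ambiguity in the $\tilde\d_g$. Your identification of the pole-removal step as the genuine input from \cite[Section 4.4]{O} matches exactly where the paper places the weight.
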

\begin{thm}(\cite[Theorem 3.19 and Corollary 5.6]{DO})
Any irreducible tempered representation $V$ is isomorphic to a summand
of a generalized principal series representation for a unitary standard
induction datum $\xi\in\Xi_u$ whose isomorphism class is uniquely
determined by $V$.
\end{thm}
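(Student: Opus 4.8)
The plan is to read the statement off the $C^*$-algebraic incarnation of the Fourier transform. Recall first (\cite[Theorem 2.25]{O} and \cite[Theorem 3.11, Theorem 3.19]{DO}) that every irreducible (hence finite-dimensional) tempered $\H$-module $V$ extends uniquely to an irreducible $*$-representation $\rho$ of $\cf$, and that $\hat{\S}$ is precisely the spectrum of $\cf$. Completing the isomorphism of \cite[Theorem 5.3]{DO} (this is the content of \cite[Corollary 5.6]{DO}) realizes $\cf$ as a section algebra of an (upper semi-)continuous field of finite-dimensional $C^*$-algebras over the compact Hausdorff orbit space $\Xi_u/\W$: the fibre over the orbit $\W\xi$ of $\xi=(P,\d,t)$ is the finite-dimensional semisimple $*$-algebra $\Bc_\xi:=\pi(\xi)(\cf)\subseteq\operatorname{End}(i(V_\d))$, and ``evaluation at $\W\xi$'' is a surjection $\cf\to\Bc_\xi$ with kernel $\I_{\W\xi}$. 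In particular $C(\Xi_u/\W)$ embeds as a unital subalgebra of $Z(\cf)$.

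For the existence of $\xi$, the next step is to restrict $\rho$ to the central subalgebra $C(\Xi_u/\W)$: since $V$ is finite-dimensional and irreducible, this restriction is a character, i.e.\ evaluation at a point $\W\xi\in\Xi_u/\W$, so $\rho$ annihilates $\I_{\W\xi}$ and hence factors through an irreducible representation of the fibre $\Bc_\xi$. I would then use that $\Bc_\xi$ acts faithfully on $i(V_\d)$, and that $\pi(\xi)(\H)$, being dense in the finite-dimensional algebra $\pi(\xi)(\cf)=\Bc_\xi$, equals it; thus the $\H$-submodules of $i(V_\d)$ are exactly its $\Bc_\xi$-submodules. Since $\pi(\xi)$ is unitary for $\xi\in\Xi_u$, the module $i(V_\d)=\pi(\xi)$ is completely reducible over $\Bc_\xi$, so by faithfulness every irreducible $\Bc_\xi$-module occurs in it as a $\Bc_\xi$-summand, hence as an $\H$-summand; applying this to the $\Bc_\xi$-module underlying $\rho$ shows that $V$ is isomorphic to a summand of $\pi(\xi)$.

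For the uniqueness, suppose $V$ is a summand of both $\pi(\xi)$ and $\pi(\xi')$. As in the previous paragraph the $\H$-action on $V$ factors through $\pi(\xi)(\cf)=\Bc_\xi$ and through $\Bc_{\xi'}$, so the extension $\rho$ of $V$ to $\cf$ kills both $\I_{\W\xi}$ and $\I_{\W\xi'}$. If $\W\xi\neq\W\xi'$, I would choose $f\in C(\Xi_u/\W)$ with $f(\W\xi)=0$ and $f(\W\xi')=1$; then $f\in\I_{\W\xi}$ while $1-f\in\I_{\W\xi'}$, whence $1=f+(1-f)\in\ker\rho$, which is absurd. Therefore $\W\xi=\W\xi'$, and the $\W$-orbit of $\xi$ --- that is, the isomorphism class of the induction datum --- is uniquely determined by $V$.

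Granting the two cited inputs (automatic continuity of irreducible tempered representations on $\cf$, and the $C^*$-version of the Fourier transform), everything above is formal: finite-dimensional semisimple algebra together with the $C(\Xi_u/\W)$-algebra structure. The genuinely hard point, which is packaged into \cite[Corollary 5.6]{DO}, will be the identification of the fibres of this field --- one must show that the $C^*$-completion of the algebra of $\W$-invariant smooth sections really has fibre $\pi(\xi)(\cf)$ over $\W\xi$ (equivalently, that there are enough $\W$-invariant smooth sections to surject onto this commutant at every point of $\Xi_u$), and that the resulting field is (upper semi-)continuous over $\Xi_u/\W$, which is what legitimises the partition-of-unity argument in the uniqueness step.
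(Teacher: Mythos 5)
Your uniqueness argument is sound and is essentially the intended derivation of \cite[Corollary 5.6]{DO}: once $\cf$ is realized, by completing the Fourier isomorphism, as the section algebra of a continuous field over $\Xi_u/\W$ whose fibre at $\W\xi$ is $\pi(\xi)(\cf)$, the partition-of-unity argument over the orbit space pins down $\W\xi$. You correctly isolate the nontrivial ingredient here, namely that evaluation at $\xi$ is surjective onto the full commutant of $\operatorname{End}_\H(\pi(\xi))$ in $\operatorname{End}(i(V_\d))$; this is \cite[Corollary 5.4]{DO} (the span of the intertwiners $\pi(g,\xi)$, $g\in\W_{\xi,\xi}$, exhausts $\operatorname{End}_\H(\pi(\xi))$) combined with the double commutant theorem.

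The existence half, however, is circular as written. Temperedness of $V$ gives, by definition, only a continuous extension to the Fr\'echet algebra $\S$; the further extension to the $C^*$-completion $\cf$ (equivalently, weak containment of $V$ in the regular representation of $\H$ on $L_2(\H)$) is not automatic, and the sources you cite for it include \cite[Theorem 3.19]{DO} --- which is precisely the statement you are proving. In \cite{DO} the logical order is the reverse: the exhaustion of the tempered dual by summands of the $\pi(\xi)$ is established first, by a representation-theoretic argument (non-vanishing of the constant term of $V$ along a suitable $P\in\P$ for which that constant term is discrete series modulo the centre of $\H^P$, followed by Frobenius reciprocity to embed $V$ into some $\pi(\xi)$ with $\xi\in\Xi_u$, and unitarity of $\pi(\xi)$ to split the embedding), and the fact that irreducible tempered modules live on $\cf$ is then a consequence. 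To make your route non-circular you would need an independent proof that irreducible continuous finite-dimensional representations of $\S$ extend to $\cf$, for instance by showing that $\cc(\Xi_u,\operatorname{End}(\V_\Xi))^\W$ is spectrally invariant in its $C^*$-completion so that maximal two-sided ideals of finite codimension correspond; that is plausible but leans on the full strength of \cite[Theorem 5.3]{DO} and still requires an argument. As it stands, the analytic core of the existence statement has been assumed rather than proved.
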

This theorem tells us that in order to classify the irreducible
tempered representations, it is enough to classify the discrete
series representations and to understand how the generalized
principal series representations with unitary induction parameter
decompose in irreducible subrepresentations. The theory of the
analytic $R$-group below is designed to resolve this last problem of the
decomposition of $\pi(\xi)$ for unitary $\xi$.
\subsubsection{The 2-cocycle $\ga_{\W,\Delta}$}\label{subsub:2coc}
It is
conventional to denote by $\G^0$ the set of objects of a groupoid
$\G$, and
by $\G^1$ the set of morphisms or arrows of $\G$. Each arrow
$g\in\G^1$ has a source object $s(g)$ and a target object $t(g)$,
and this defines two maps $s,t:\G^1\to\G^0$. The set of composable
pairs of arrows is $\G^2:=\{(g_1,g_2)\mid g_i\in\G^1,
s(g_2)=t(g_1)\}$. This set is thus is a fibered product
\begin{equation}
\G^2=\G^1{}_s\times_t\G^1.
\end{equation}

The twisting isomorphisms $\psi_g:\H_P\to\H_Q$ with $g\in\W_{P,Q}$ define
a homomorphism from the groupoid $\W$ to the groupoid
$\operatorname{Iso}_\P$ whose set of objects is $\P$ and whose morphisms
$\operatorname{Iso}_\P(P,Q)$ consist of algebra isomorphisms from $\H_P$ to $\H_Q$
which map $\A_{P,+}$ to $\A_{Q,+}$, where $\A_{P,+}$ is the subalgebra
of $\H_P$ spanned by the elements $N_x$ with $x\in X_{P,+}$ and
similarly for $\A_{Q,+}$. This induces an action of $\W$ on the
set $\Delta$. The choice of intertwining isomorphisms made in (\ref{eq:choice})
determines a $\operatorname{U}(1)$-valued
$2$-cocycle $\ga_{\W,\Delta}$ on the finite groupoid $\W_\Delta=\W\times_\P\Delta$
(which has the finite set $\Delta$ as its set of objects) as follows. Let
$g^\prime,g$ be composable arrows in $\W$, let
$\delta\in\Delta_P$ where $P$ is the source of $g$,
and let $\d^\prime=\d^g$.
With the above notations, we have
$((g^\prime,\d^\prime),(g,\d))\in
\W_\Delta^2:=\W_\Delta {{}_s\times_t}\W_\Delta$
(where $s,t$ denote the
source and target map of the groupoid $\W_\Delta$).
We define a function $\ga_{\W,\Delta}$ on $\W_\Delta^2$ by:
\begin{equation}\label{eq:coc}
{\tilde\d}^\prime_{g^\prime}\circ{\tilde\d}_g=
\ga_{\W,\Delta}((g^\prime,\d^\prime),(g,\d))\tilde\d_{g^\prime g}.
\end{equation}
\begin{prop}\label{prop:coc}
The function $\ga_{\W,\Delta}$ defines a $2$-cocycle on
$\W_\Delta^2$ with values in
$\operatorname{U}(1)$, whose class $[\ga_{\W,\Delta}]\in H^2(\W_\Delta,\operatorname{U}(1))$
is independent of the choices of the $\tilde\d_g$. We can choose the $\tilde\d_g$ such that
$\ga_{\W,\Delta}$ has values in the group $\mu(D_\Delta)$ of complex $D_\Delta$-th roots of
unity, with
$D_\Delta:=\operatorname{lcm}_{\d\in\Delta}\{\operatorname{dim}(V_\d)\}$.
\end{prop}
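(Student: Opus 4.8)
The plan is to verify the cocycle identity by a direct associativity computation, then to handle the independence of the cohomology class and the root-of-unity normalization by elementary manipulations with the intertwining isomorphisms $\tilde\d_g$.

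First I would establish that $\ga_{\W,\Delta}$ is a $2$-cocycle. Fix three composable arrows $g''$, $g'$, $g$ in $\W$ with source of $g$ equal to $P$, and set $\d\in\Delta_P$, $\d'=\d^g$, $\d''=\d'^{g'}=\d^{g'g}$. Each $\tilde\d_g:V_\d\to V_{\d^g}$ intertwines $\d\circ\psi_g^{-1}$ with $\d^g$, and since all these representations are irreducible, Schur's lemma guarantees that any two such intertwiners differ by a scalar in $\operatorname{U}(1)$ (after fixing the standard unitary structures); this is exactly what makes the defining relation (\ref{eq:coc}) meaningful. I would then compute the composite $\tilde{\d}''_{g''}\circ\tilde{\d}'_{g'}\circ\tilde{\d}_{g}$ in two ways: bracketing as $(\tilde{\d}''_{g''}\circ\tilde{\d}'_{g'})\circ\tilde{\d}_{g}$ gives $\ga_{\W,\Delta}((g'',\d''),(g',\d'))\,\tilde{\d}'_{g''g'}\circ\tilde{\d}_g=\ga_{\W,\Delta}((g'',\d''),(g',\d'))\,\ga_{\W,\Delta}((g''g',\d'),(g,\d))\,\tilde{\d}_{g''g'g}$, while bracketing as $\tilde{\d}''_{g''}\circ(\tilde{\d}'_{g'}\circ\tilde{\d}_{g})$ gives $\ga_{\W,\Delta}((g',\d'),(g,\d))\,\tilde{\d}''_{g''}\circ\tilde{\d}_{g'g}=\ga_{\W,\Delta}((g',\d'),(g,\d))\,\ga_{\W,\Delta}((g'',\d''),(g'g,\d))\,\tilde{\d}_{g''g'g}$. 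Comparing the two expressions yields the groupoid $2$-cocycle identity for $\ga_{\W,\Delta}$ on $\W_\Delta^2$.

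Next, for the independence of the class $[\ga_{\W,\Delta}]$: a different choice $\tilde\d_g^{\,\natural}$ of intertwiners differs from the original by a $\operatorname{U}(1)$-valued function, i.e. $\tilde\d_g^{\,\natural}=\mu(g,\d)\,\tilde\d_g$ for some $0$-cochain $\mu$ on $\W_\Delta^1$ (with $\mu$ depending on the pair $(g,\d)$, equivalently on the arrow of $\W_\Delta$). Substituting into (\ref{eq:coc}) and using the composition law of $\W_\Delta$, one finds that the new cocycle is $\ga_{\W,\Delta}^{\,\natural}=\ga_{\W,\Delta}\cdot(\partial\mu)$, where $\partial$ is the groupoid coboundary operator. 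Hence $[\ga_{\W,\Delta}]$ is well defined in $H^2(\W_\Delta,\operatorname{U}(1))$.

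Finally, for the root-of-unity normalization I would exploit the determinant. On each irreducible $(\d^g,V_{\d^g})$ the standard unitary structure is fixed, so an intertwiner $\tilde\d_g$ is determined up to a phase; I would use this freedom to rescale $\tilde\d_g$ so that $\det(\tilde\d_g)=1$ — note that if $\dim V_\d=\dim V_{\d^g}=:d$, multiplying $\tilde\d_g$ by a scalar $\lambda$ multiplies the determinant by $\lambda^{d}$, so any target value of $\det$ can be reached, in particular the value $1$. Taking determinants in (\ref{eq:coc}) then gives $1=\ga_{\W,\Delta}((g',\d'),(g,\d))^{\,d'}$ where $d'=\dim V_{\d'}=\dim V_{\d}$ is the common dimension of source and target objects of the arrow $g'g$ (all the objects $\d,\d',\d^{g'g}$ lie in a single $\W$-orbit, hence share a dimension); therefore each value of $\ga_{\W,\Delta}$ is a $d'$-th root of unity, and a fortiori a $D_\Delta$-th root of unity. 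The only mild subtlety, and the point I would be most careful about, is the bookkeeping: making sure that the determinant-normalization of $\tilde\d_g$ is consistent with, and does not interfere with, keeping the $\tilde\d_g$ unitary (it does not, since phases have modulus one), and that the dimension appearing in the determinant computation is indeed constant along $\W$-orbits so that $D_\Delta$ is the correct universal bound.
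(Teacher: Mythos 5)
Your proposal is correct and follows essentially the same route as the paper: associativity of composition for the cocycle identity, a coboundary argument for the independence of the class, and the determinant trick for the root-of-unity normalization. The only cosmetic difference is in the last step, where you rescale so that $\det(\tilde\d_g)=1$ directly (which requires fixing a basis of each $V_\d$ once and for all so that determinants are multiplicative, not just the unitary structure), whereas the paper first shows that $\ga_{\W,\Delta}^{D_\Delta}$ is the coboundary of a $\mathbb{C}^\times$-valued function and then extracts $D_\Delta$-th roots to modify the $\tilde\d_g$; both yield the same conclusion.
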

\begin{proof}
Let $((f,\d),(g,\e),(h,\zeta))\in\W^3_\Delta:=
\W_\Delta{{}_s\times_t}\W_\Delta{{}_s\times_t}\W_\Delta$.
By associativity
\begin{equation}
(\tilde{\d}_f\circ\tilde{\e}_g)\circ\tilde{\zeta}_h=
\tilde{\d}_f\circ(\tilde{\e}_g\circ\tilde{\zeta}_h)
\end{equation}
one checks the $2$-cocycle relation of $\ga_{\W,\Delta}$.
It is clear that changing the choices of the isomorphisms $\tilde\d_g$
in equation (\ref{eq:choice}) changes $\ga_{\W,\Delta}$ only by a coboundary.
Let us now prove the last assertion. First suppose that $D_\Delta=1$.
Choose a basis vector in $V_\delta$ for each pair $(P,\delta)$ with
$\d\in\Delta_P$. Then $\tilde{\d}_g$ is a complex scalar, and relation (\ref{eq:coc})
expresses $\ga_{\W,\Delta}$ as the coboundary of the $\mathbb{C}^\times$-valued
function $g\to\tilde{\d}_g$ on $\W_\Delta^1$, proving the assertion in this special
case. In the general case, taking determinants (and suitable powers) in (\ref{eq:coc})
similarly shows that $\ga_{\W,\Delta}^{D_\Delta}$ is the coboundary of a
$\mathbb{C}^\times$-valued function $\epsilon$ on $\W_\Delta^1$, i.e.
$\ga_{\W,\Delta}(g,g^\prime)^{D_\Delta}=
\epsilon(g)\epsilon(g^\prime)\epsilon(g\circ g^\prime)^{-1}$ for all
$(g,g^\prime)\in\W_\Delta^2$.
Now choose $\zeta(g)$ for $g\in\W_\Delta$ such that $\epsilon(g)=\zeta^{D_\Delta}(g)$,
and replace $\tilde{\d}_g$ by $\tilde{\d}_g^\prime=\zeta^{-1}\tilde{\d}_g$.
The $2$-cocycle $\ga^\prime_{\W,\Delta}$ defined by (\ref{eq:coc})
after replacing $\tilde{\d}_g$ by $\tilde{\d}_g^\prime$ takes values in $\mu(D_\Delta)$.
\end{proof}
The projective representation $\pi$ of the groupoid $\W_{\Xi_u}$
is related to $\ga_{\W,\Delta}$ by the following formula, which follows
immediately from the definition of the cocycle $\ga_{\W,\Delta}$, the definition
of the normalized intertwining operators, and from the fact that the
normalized intertwining elements satisfy the Weyl group relations
(cf. \cite[Lemma 4.1]{O}): Let $\xi=(P,\d,t)\in\Xi_{(P,\d),u}$, then
\begin{equation}\label{eq:groupoidcoc}
\pi(h,g\xi)\circ\pi(g,\xi)=\ga_{\W,\Delta}((h,\d^g),(g,\d))\pi(hg,\xi)
\end{equation}
\begin{dfn}\label{dfn:gaxi}
Formula (\ref{eq:groupoidcoc}) defines a torsion $2$-cohomology class
$[\ga_{\W,\Xi}]\in H^2(\W_{\Xi_u},\operatorname{U}(1))$, the pull back of $[\ga_{\W,\Delta}]$
via the natural homomorphism of groupoids $\W_{\Xi_u}\to\W_{\Delta}$.
\end{dfn}
One should think of
$[\ga_{\W,\Xi}]$ as the characteristic class of the projective
bundle over the groupoid $\W_\Xi$ which is defined by the
induction intertwining functor $\pi$.
\subsubsection{Inertial orbits of discrete series modulo the center}\label{subsub:O}
It is sometimes convenient to work with a slightly modified version
of the groupoid of standard induction data.

We say that an irreducible representation $\si$ of $\H^P$ is discrete series
modulo the center if $\si$ is equivalent to a representation of the form
$\delta_t$ with $t\in T^P_u$ and $\delta$ an irreducible discrete series
representation of $\H_P$ (see paragraph \ref{par:inddat}).
Let $P\subset F_0$ and let $\si$ be a discrete series representation modulo the
center of $\H^P$.
By definition the inertial orbit $\O_{(P,\si)}$ of $\si$ is the set of
the equivalence classes $(P,\si\circ\psi_t)$ of the $\H^P$-representations
$\si\circ\psi_t$ where $t$ varies in $T^P$.
This gives a natural $T^P$ action on $\O_{(P,\delta)}$.

It is obvious that the
isotropy of a datum $(P,\si)\in\O_{(P,\delta)}$ is always a finite
subgroup of $T^P$.
It is also clear (see also the discussion in paragraph
\ref{par:inddat}) that each orbit $\O_{(P,\si)}$ contains
a unique $K_P$-orbit of discrete series modulo the center which
descend to $\H_P$.
If $\si_P$ descends to $\H_P$ then there exists a discrete series
representation $\d$ of $\H_P$ such that $[\d_1]=\si_P$.
Therefore there exists, for each orbit $\O_{(P,\si)}$, a component
$\Xi_{(P,\d)}$ of $\Xi$ and a finite covering map
\begin{align}\label{eq:cov}
\Xi_{(P,\d)}&\to \O_{(P,\si)}\\
\nonumber(P,\d,t)&\to (P,[\d_t])
\end{align}
In this case we will also use the notation $\O_{(P,\d)}$ to
denote $\O_{(P,\si)}$.
It is easy to see that for given
$\xi=(P,\d,t)$ and $\xi^\prime=(P,\d^\prime,s)$
we have $[\d^\prime_s]=[\d_t]$ (isomorphic as representations of
$\H^P$) if and only if $K_P\xi=K_P\xi^\prime$. In other words, we
have
\begin{equation}\label{eq:covmap}
\O=\K\backslash\Xi=|\K_\Xi|,
\end{equation}
(where $|\K_\Xi|$ denotes the orbit space of isomorphism classes
of objects of the groupoid $\K_\Xi=\K\times_\P\Xi$), and the
covering (\ref{eq:cov}) is given by taking the
quotient of $\Xi_{(P,\d)}$ by the isotropy subgroup
$K_\d\in K_P$ of $[\d]$. Since the action of $\K$ on $\Xi$ is
free, the orbit map extends to a homomorphism of groupoids
(viewing $\O$ as the ``unit'' groupoid with only identity
morphisms)
\begin{equation}\label{eq:OKmorita}
\K_\Xi\to\O
\end{equation}
which is a Morita equivalence (in the sense of \cite{Moe}).
The space $\O$ is a disjoint union of finitely many orbits
of the form $\O_{(P,\d)}$ (parameterized by the $\K$-orbits
on $\Delta$), and each orbit $\O_{(P,\d)}$ has the natural
structure of a $T^P/{K_\d}$-torsor (corresponding to the multiplication
action of $T^P$ on $\Xi_{(P,\d)}$ by identifying $\Xi_{(P,\d)}$
with $T^P$). This gives $\O$ the structure of
a complex algebraic variety and it defines a special compact
form $\O_u$ of $\O$.

Clearly $\O$ carries a natural action of the Weyl groupoid $\Wf=\W/\K$.
We consider the groupoid
\begin{equation}
\Wf_\O:=\Wf\times_\P\O
\end{equation}
(and its compact form $\W_{\O_u}$).
The observations made in this paragraph amount to saying that:
\begin{prop}\label{prop:morita}
The groupoids $\W_\Xi$ ($\W_{\Xi_u}$)
and $\Wf_\O$ (resp. $\Wf_{\O_u}$) are Morita equivalent.
\end{prop}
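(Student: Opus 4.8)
The plan is to unpack the three equalities in Proposition~\ref{prop:morita} from the structure already assembled in paragraph~\ref{subsub:O}, and to observe that Morita equivalence of groupoids is preserved under base change. First I would recall that a homomorphism of groupoids is a Morita equivalence precisely when it is essentially surjective and fully faithful (equivalently, when the induced map on orbit spaces is a bijection and the map on isotropy groups is an isomorphism). The content of paragraph~\ref{subsub:O} already gives us the key input: the action of $\K$ on $\Xi$ is free, so by (\ref{eq:covmap}) and (\ref{eq:OKmorita}) the orbit map $\K_\Xi\to\O$ is a Morita equivalence, where $\O$ is regarded as the unit groupoid on the space $\O=\K\backslash\Xi$. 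The same freeness gives the Morita equivalence $\K_{\Xi_u}\to\O_u$ after restricting the objects to the compact form.

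Next I would exploit the semidirect decomposition of the Weyl groupoid: since $\Wf=\W/\K$ with $\K$ the ``normal subgroupoid'' as defined above, there is a natural projection functor $\W\to\Wf$, and upon forming the fibred products over $\P$ with $\Xi$ we obtain a commuting square relating $\W_\Xi$, $\K_\Xi$, $\Wf_\O$ and $\O$. Concretely, the functor $\W_\Xi\to\Wf_\O$ sends an object $\xi\in\Xi$ to its $\K$-orbit $[\xi]\in\O$ and sends a morphism $g=k\times u\in\W$ to its image $u\in\Wf$; this is well-defined by (\ref{eq:actgro}) and the definition $\Wf=\W/\K$. I would then check the two conditions for this to be a Morita equivalence. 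Essential surjectivity is immediate: every object of $\O$ is, by definition of $\O=\K\backslash\Xi$, the $\K$-orbit of some $\xi\in\Xi$, hence isomorphic (in fact equal as an object) to the image of $\xi$. For fully faithfulness, given $\xi,\eta\in\Xi$ one must show the map
\begin{equation}
\{g\in\W \mid g(\xi)=\eta\}\;\longrightarrow\;\{u\in\Wf \mid u([\xi])=[\eta]\}
\end{equation}
is a bijection. Surjectivity follows because a $u\in\Wf$ with $u([\xi])=[\eta]$ lifts to some $g_0=k_0\times u\in\W$ with $g_0(\xi)\in\K\eta$, and then correcting $g_0$ by the appropriate element of $\K$ (using freeness of the $\K$-action to make the correction unique) produces a genuine arrow $\xi\to\eta$ over $u$; injectivity follows because two lifts over the same $u$ differ by an element of $\K$ fixing $\xi$, which is trivial by freeness. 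The same argument applied verbatim with $\Xi$ replaced by $\Xi_u$ (which is stable under $\W$) yields the compact version.

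The main obstacle, and the only point requiring genuine care rather than bookkeeping, is the correction step in the proof of fully faithfulness: one must verify that the freeness of the $\K$-action on $\Xi$ is exactly what guarantees both that a lift of $u$ to an arrow $\xi\to\eta$ exists and that it is unique. This is where the hypothesis ``the action of $\K$ on $\Xi$ is free'' (stated in paragraph~\ref{subsub:O}, and itself a consequence of (\ref{eq:covmap}) together with the fact that $K_\d$ acts freely on $\Xi_{(P,\d)}$ by translations in the torus $T^P$) does all the work. I would also remark briefly that Morita equivalence of groupoids is an equivalence relation, so that chaining the equivalence $\W_\Xi\sim\Wf_\O$ with the already-established $\K_\Xi\sim\O$ is consistent and gives a uniform picture; but strictly speaking only the single equivalence $\W_\Xi\to\Wf_\O$ (and its compact counterpart) needs to be established to prove the proposition as stated.
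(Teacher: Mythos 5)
Your proposal is correct and is exactly the intended argument: the paper gives no explicit proof, stating only that ``the observations made in this paragraph amount to'' the proposition, namely that $\K$ acts freely on $\Xi$, that $\O=\K\backslash\Xi$, and that $\Wf=\W/\K$, from which the quotient functor $\W_\Xi\to\Wf_\O$ is essentially surjective and fully faithful. Your unpacking of the lifting/correction step via freeness of the $\K$-action is precisely the content the paper leaves implicit.
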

However, it is important to observe at this point that:
\begin{rem}\label{rem:notmorita}
Let $|\Delta|$ denote the set of isomorphism classes of the
normal subgroupoid $\K_\Delta$ of $\W_\Delta$.
The quotient homomorphism $\W_\Delta\to\Wf_{|\Delta|}$ (defined by
sending $w\times k\to w$ and $\d\to\K\d$) is a Morita equivalence
if and only if all the isotropy groups $K_\d$ are trivial.
\end{rem}
\subsection{The Fourier isomorphism}
We will formulate the main result of \cite{DO} (see loc.cit
Section 5) in this section. Denote by
the trivial vector bundle over $\Xi$ whose fibre at $\xi$ is
equal to $V_\xi=i(V_\d)$, thus
\begin{equation}
\V_\Xi:=\coprod_{(P,\d)}\Xi_{P,\d}\times i(V_\d)
\end{equation}
The algebra of smooth sections of the trivial bundle
$\operatorname{End}(\V_{\Xi})$ on $\Xi_u$ will be denoted by
$\cc(\Xi_u,\operatorname{End}(\V_{\Xi}))$\index{C@$\cc(\Xi_u,
\operatorname{End}(\V_{\Xi}))$,
algebra of smooth sections in $\operatorname{End}(\V_{\Xi})$}.
We equip this algebra with its usual Fr\'echet topology.
We define the set of $\W$-equivariant sections in this bundle
as follows. Recall that $\pi(g,\xi)$
is smooth and has smooth inverse on $\Xi_u$.
Take $\xi\in\Xi_{P,u}$ and let
$A$ be an element of $ \operatorname{End}(V_\xi)$.
For $g\in\W_\xi$ (where $W_\xi$ denotes the set of elements in
$\W$ which act on $\xi$, hence with source $P$) we define
$g(A):=\pi(g,\xi)\circ A\circ \pi(g,\xi)^{-1}\in
\operatorname{End}(V_{g(\xi)})$.
\begin{dfn} A section of
$f$ of $\operatorname{End}(\V_\Xi))$ is called
$\W$-equivariant if we have $f(\xi)=g^{-1}(f(g(\xi)))$
for all $\xi\in \Xi$ and $g\in\W_\xi$. We denote the
subalgebra of smooth $\W$-equivariant sections by
$\cc(\Xi_u,\operatorname{End}(\V_{\Xi}))^\W$.
\end{dfn}
The Fourier transform $\F$ is canonically defined in terms of the
induction intertwining functor $\pi$: Given $x\in\S$ we define a
section $\F(x)$ of $\operatorname{End}(\V_{\Xi})$ by
$\F(x)(\xi):=\pi(\xi,x)$. The fact that the target of $\pi$ is
a category whose objects are unitary representations of $\H$ implies
that $\F$ is an algebra homomorphism, and the functoriality of $\pi$
amounts to the fact that $\F(x)$ is a $\W$-equivariant section in the
above sense. In \cite[Proposition 7.3]{DO} it was shown that in fact
$\F(\S)\subset\cc(\Xi_u,\operatorname{End}(\V_{\Xi}))^\W$
(this inclusion is not very hard to prove).

We define a wave packet operator at first as the isometry
\begin{equation}
\J:L_2(\Xi_u,\operatorname{End}(\V_{\Xi}),\mu_{Pl})\to L^2(\H)
\end{equation}
(where $\mu_{Pl}$ is the Plancherel measure, cf. \cite[Section 4]{DO})
which is the adjoint of the $L_2$-extension of the
Fourier transform. From the expression of the density
function of the Plancherel measure it is
easy to see that the space
\begin{equation}
\mathcal{C}(\Xi_u,\operatorname{End}(\V_{\Xi})):=c
\cc(\Xi_u,\operatorname{End}(\V_{\Xi})),
\end{equation}
where $c$ denotes the $c$-function on $\Xi_u$
(see e.g. \cite[Definition (9.7)]{DO}), is
a subspace of the Hilbert space
$L_2(\Xi_u,\operatorname{End}(\V_{\Xi}),\mu_{Pl})$.
Hence $\J$ is well defined on this vector space.
We equip $\mathcal{C}(\Xi_u,\operatorname{End}(\V_{\Xi}))$
with the Fr\'echet topology of
$\cc(\Xi_u,\operatorname{End}(\V_{\Xi}))$ via the
linear isomorphism
$\cc(\Xi_u,\operatorname{End}(\V_{\Xi}))\to
\mathcal{C}(\Xi_u,\operatorname{End}(\V_{\Xi}))$
defined by $\sigma\to c\sigma$.
Finally we define an averaging projection
$p_\W$ onto the space of $\W$-equivariant sections
by:
\begin{equation}
p_\W(f)(\xi):=|\W_\xi|^{-1}
\sum_{g\in \W_\xi}g^{-1}(f(g(\xi))).
\end{equation}
We can now formulate the main result of \cite{DO}:
\begin{thm}\label{thm:mainDO}
The Fourier transform restricts to an isomorphism of
Fr\'echet algebras
\begin{equation}
\F:\S\to C^\infty(\Xi_u,\operatorname{End}(\V_\Xi))^\W.
\end{equation}
The wave packet operator $\J$ restricts to a surjective
continuous map
\begin{equation}\label{eq:js}
\J_{\mathcal{C}}:\mathcal{C}(\Xi_u,\operatorname{End}(\V_\Xi))\to\S.
\end{equation}\index{J@$\J$, wave packet operator,
adjoint of $\F$!$\J_{\mathcal{C}}$, restriction of
$\J$ to the Fr\'echet space
$\mathcal{C}(\Xi_u,\operatorname{End}(\V_\Xi))$}
We have $\J_{\mathcal{C}}\F=\operatorname{id_\S}$, and we have
$\F\J_{\mathcal{C}}=
p_\W|_{\mathcal{C}(\Xi_u,\operatorname{End}(\V_\Xi))}$.
In particular, the map $p_\W$ is a continuous
projection of
$\mathcal{C}(\Xi_u,\operatorname{End}(\V_{\Xi}))$ onto
$\cc(\Xi_u,\operatorname{End}(\V_{\Xi}))^\W$.
\end{thm}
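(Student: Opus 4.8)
The plan is to build on the $L_{2}$-Plancherel theory of $\H$ (essentially due to Opdam, see \cite{O}, and in the refined form needed here to \cite{DO}) and then to upgrade it to the level of Schwartz spaces. First I recall the $L_{2}$-statement: the assignment $x\mapsto(\xi\mapsto\pi(\xi,x))$ extends from $\H$ to an isometric isomorphism of $L_{2}(\H)$ onto the closed subspace $L_{2}(\Xi_{u},\operatorname{End}(\V_\Xi),\mu_{Pl})^{\W}$ of sections which are $\W$-equivariant in the $L_{2}$-sense, the $\W$-action on sections being conjugation by the operators $\pi(g,\xi)$. By Theorem~\ref{thm:indint} these operators are unitary on $\Xi_{u}$, and $\mu_{Pl}$ is $\W$-invariant, so this is a unitary action of a groupoid with finite isotropy; hence fibrewise averaging, i.e. the operator $p_{\W}$, is exactly the orthogonal projection of $L_{2}(\Xi_{u},\operatorname{End}(\V_\Xi),\mu_{Pl})$ onto $L_{2}(\Xi_{u},\operatorname{End}(\V_\Xi),\mu_{Pl})^{\W}$. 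Writing $\F$ for the Plancherel isometry and $\J=\F^{*}$ for its Hilbert-space adjoint, we get on the $L_{2}$-level the identities $\J\F=\operatorname{id}_{L_{2}(\H)}$ and $\F\J=\F\F^{*}=p_{\W}$.

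Next I would establish the two mapping properties. The inclusion $\F(\S)\subset\cc(\Xi_{u},\operatorname{End}(\V_\Xi))^{\W}$ is the easy statement already cited above: since the matrix coefficients of $\pi(\xi)$ are regular functions of $\xi$, rapid decay of the coefficients of $x\in\S$ in the norm $\Nc$ forces $\xi\mapsto\pi(\xi,x)$, together with all its $\xi$-derivatives, to converge uniformly on the compact space $\Xi_{u}$. The crucial converse is that the wave-packet operator $\J$ maps $\Cc(\Xi_{u},\operatorname{End}(\V_\Xi))=c\,\cc(\Xi_{u},\operatorname{End}(\V_\Xi))$ \emph{continuously} into $\S$, which defines the map $\J_{\Cc}$ of (\ref{eq:js}); this is the analytic heart of the theorem and I expect it to be the main obstacle. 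The argument should follow the pattern of Harish-Chandra's Schwartz-space wave-packet estimates: pair $\J(c\sigma)$ with $N_{w}$, express the result as an integral over $\Xi_{u}$ of the ``Eisenstein-integral''-type matrix coefficients of $\pi(\xi)$ against $c$ and the smooth section $\sigma$, and integrate by parts repeatedly in $\xi$ on the compact torus, each integration by parts producing a further power of $\Nc(w)^{-1}$, using sharp bounds for these matrix coefficients and their $\xi$-derivatives (where the asymptotic/constant-term analysis for generic parameters enters); the delicate point is precisely that $c$ is \emph{singular} on $\Xi_{u}$, so the differentiations must be organised so that the poles of $c$ cancel. Two further technical facts of the same flavour are needed: that the $\W$-average of the product of a smooth section with $c$ is again smooth, so that $p_{\W}$ maps $\Cc(\Xi_{u},\operatorname{End}(\V_\Xi))$ into $\cc(\Xi_{u},\operatorname{End}(\V_\Xi))^{\W}$, and the companion inclusion $\cc(\Xi_{u},\operatorname{End}(\V_\Xi))^{\W}\subset\Cc(\Xi_{u},\operatorname{End}(\V_\Xi))$.

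With these in hand the $L_{2}$-identities transfer to the Fréchet level: since $\H$ is dense in $\S$, $\F$ is continuous from $\S$ into $\cc(\Xi_{u},\operatorname{End}(\V_\Xi))^{\W}\subset\Cc(\Xi_{u},\operatorname{End}(\V_\Xi))$, and $\J_{\Cc}$ is continuous into $\S$, so $\J_{\Cc}\F=\operatorname{id}_{\S}$ and $\F\J_{\Cc}=p_{\W}|_{\Cc(\Xi_{u},\operatorname{End}(\V_\Xi))}$ hold on dense subspaces and extend by continuity; moreover $\F$ is an algebra homomorphism because each $\pi(\xi)$ is a representation of $\H$ and the target carries the pointwise product.

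Finally I assemble the conclusion. From $\J_{\Cc}\F=\operatorname{id}_{\S}$ the map $\F\colon\S\to\cc(\Xi_{u},\operatorname{End}(\V_\Xi))^{\W}$ is injective with continuous left inverse $\J_{\Cc}|_{\cc(\Xi_{u},\operatorname{End}(\V_\Xi))^{\W}}$; for surjectivity, given $f\in\cc(\Xi_{u},\operatorname{End}(\V_\Xi))^{\W}$ one has $f\in\Cc(\Xi_{u},\operatorname{End}(\V_\Xi))$, hence $\J_{\Cc}(f)\in\S$, and $\F(\J_{\Cc}(f))=p_{\W}(f)=f$ since $p_{\W}$ acts as the identity on $\W$-equivariant sections. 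Thus $\F$ is an isomorphism of Fréchet algebras onto $\cc(\Xi_{u},\operatorname{End}(\V_\Xi))^{\W}$ with inverse $\J_{\Cc}|_{\cc(\Xi_{u},\operatorname{End}(\V_\Xi))^{\W}}$, and in particular $\F(\S)=\cc(\Xi_{u},\operatorname{End}(\V_\Xi))^{\W}$. Consequently $\J_{\Cc}$ is surjective onto $\S$ (every $x\in\S$ equals $\J_{\Cc}(\F(x))$), and $p_{\W}=\F\J_{\Cc}$ is continuous, idempotent ($p_{\W}^{2}=\F(\J_{\Cc}\F)\J_{\Cc}=\F\J_{\Cc}=p_{\W}$), has image contained in $\F(\S)=\cc(\Xi_{u},\operatorname{End}(\V_\Xi))^{\W}$, and restricts to the identity there; so it is a continuous projection of $\Cc(\Xi_{u},\operatorname{End}(\V_\Xi))$ onto $\cc(\Xi_{u},\operatorname{End}(\V_\Xi))^{\W}$, which completes the proof.
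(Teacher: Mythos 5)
You should first be aware that the paper does not prove this statement at all: it is quoted verbatim as the main result of \cite{DO} (Theorem 5.3 there), and the entire analytic content lives in that reference. So there is no internal proof to compare against; what can be assessed is whether your outline would actually constitute a proof. Its logical skeleton is sound and does match the architecture of \cite{DO}: the $L_2$-Plancherel isometry, the identification of $p_\W$ with $\F\F^*$, the two mapping properties $\F(\S)\subset\cc(\Xi_u,\operatorname{End}(\V_\Xi))^\W$ and $\J(\mathcal{C}(\Xi_u,\operatorname{End}(\V_\Xi)))\subset\S$, and the final algebraic assembly (injectivity from the left inverse, surjectivity from $\F\J_{\mathcal{C}}=p_\W$ together with $p_\W f=f$ for equivariant $f$) are all correct and are indeed how the theorem is deduced once the hard inputs are available.

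The genuine gaps are exactly the two facts you flag but do not establish. First, the continuity of $\J_{\mathcal{C}}:\mathcal{C}(\Xi_u,\operatorname{End}(\V_\Xi))\to\S$ is not a routine integration-by-parts on a compact torus: the integrand $c\sigma$ paired against the matrix coefficients of $\pi(\xi)$ is singular on $\Xi_u$ (the $c$-function has first-order poles along the mirrors, cf.\ Theorem \ref{thm:mirror}(iii)), and repeated $\xi$-differentiation worsens these singularities rather than merely producing powers of $\Nc(w)^{-1}$. In \cite{DO} this is handled not by brute-force integration by parts against $c$ but through the theory of the constant term and uniform (Casselman-type) estimates for coefficients of tempered modules, organised by induction on parabolic rank; your sketch names the right ingredients but does not show how the pole cancellation is actually achieved, and this is the theorem. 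Second, the ``companion inclusion'' $\cc(\Xi_u,\operatorname{End}(\V_\Xi))^\W\subset\mathcal{C}(\Xi_u,\operatorname{End}(\V_\Xi))$ amounts to showing that every smooth $\W$-equivariant section is divisible by $c$ with smooth quotient, \emph{continuously} for the transported Fr\'echet topology; this is a nontrivial statement about the interaction of equivariance with the zero/pole structure of $c$ along the mirrors (it is of the same nature as the smoothness of $p_\W(cf)$ in equation (\ref{eq:av}), which the present paper also only derives \emph{from} Theorem \ref{thm:mainDO} rather than proving independently). Without these two inputs the argument is a correct reduction, not a proof.
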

The projection $p_\W$ thus cancels singularities of sections over
$\Xi_u$ which are no worse than the poles
of the $c$-function on $\Xi_u$. This property of $p_\W$ is crucially
important in the sequel of the paper.
\section{The analytic $R$-group}\label{sec:anR}
In this section we will define the notion of the analytic
$R$-group $\mathfrak{R}_\xi$ in our context for a given unitary standard
induction datum $\xi\in\Xi_u$. Our treatment follows closely the argument of
\cite{S} but is more direct. For a good account of the r\^ole of
the $R$-group in the work of Harish-Chandra and of Knapp and Stein
\cite{KS} we refer the reader to \cite[Section 2]{A}.

The group $\mathfrak{R}_\xi$ is a subgroup of the inertia
group $\W_{\xi,\xi}$ which is a complement of a certain normal
reflection subgroup $\W^m_{\xi,\xi}$ of $\W_{\xi,\xi}$. The
reflection hyperplanes of the reflections in $\W^m_{\xi,\xi}$ are described
in terms of the Plancherel density function. The importance of the
$R$-group $\mathfrak{R}_\xi$ is that the induced module $\pi(\xi)$ (which
naturally comes with the structure of a
$\H-{}^\ga\mathbb{C}[\W_{\xi,\xi}]^{op}$ bimodule via the
induction-intertwining functor) is a Morita equivalence module
between the opposite of the $\ga_\xi$ twisted group ring of $\mathfrak{R}_\xi$
(for a certain $2$-cocycle $\ga_\xi$ derived from $\ga_{\W,\Xi}$) on the one
hand, and the category of tempered unitary $\H$-modules with central character
$\W\xi$ (in the sense of a character of the center of the Schwartz algebras $\S$,
see \cite[Corollary 5.5]{DO}) on the other hand. This implies in
particular that the irreducible tempered modules of $\H$ with
central character $\W\xi$ are in one-to-one correspondence with the
irreducible characters of ${}^\ga\mathbb{C}[\mathfrak{R_\xi}]$
(see \cite[Section 2]{A}).
\subsubsection{Definition of the $R$-group}
We identify $\Xi_{(P,\d)}$ with the complex torus $T^P$, and in
doing so, we in particular give meaning to group theoretical operations
in $\Xi_{(P,\d)}$ (such as $\xi^{-1}$). Below we use notations and concepts
associated to the chamber system of the Weyl groupoid and restrictions
of roots to facets of the Weyl chamber; we refer the reader to Section
\ref{app:weylgroupoid} for these notations and some basic facts.
We also recall the decomposition
$\mathfrak{a}=\mathfrak{a}^P\oplus\mathfrak{a}_P$
(see paragraph \ref{subsub:par}) for $P\in\P$.

The rational function $\nu(\xi)=(c(\xi)c(\xi^{-1}))^{-1}$
(which is the density function for the Plancherel measure, up to
normalizing constants) is known to be regular and positive on $\Xi_u$
(cf. \cite[Proposition 9.8]{DO}). This applies to the corank $1$ factors
of the $c$-function as well, so by the product formula for
$\nu$ (see \cite[Definition 9.7]{DO}) it is clear that the
zero set of $\nu$ in $\Xi_{(P,\d),u}$ (with $(P,\d)\in\Delta$)
is a finite union of orbits $M_{(P,\a),\xi,u}$
(with $(P,\a)\in R^P$ and $\xi\in \Xi_{(P,\d),u}$) of codimension $1$
subtori of the form $T^{(P,\a)}_u\subset T_u^P$, the unique
codimension one subtorus which lies in the kernel of the character
$(P,\a)\in R^P$.
\begin{dfn}
The orbits of the form $M_{(P,\a),\xi,u}$ in the zero set of $\nu$ intersected
with $\Xi_u$ are called mirrors in $\Xi_u$. The collection of all mirrors is
denoted by $\M$. The set of mirrors in $\Xi_{(P,\d),u}$ is denoted
by $\M_{(P,\d)}$, so that $\M=\coprod_{(P,\d)\in\Delta}\M_{(P,\d)}$ (a disjoint union).
\end{dfn}
\begin{prop}\label{prop:Minv}
The collection $\M$ is $\W$-invariant.
\end{prop}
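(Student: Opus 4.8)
The plan is to deduce the $\W$-invariance of $\M$ from the $\W$-equivariance of the Plancherel density $\nu$ together with the product formula for $\nu$ over the corank-one factors. First I would recall that for $g\in\W_{P,Q}$ and $\xi=(P,\d,t)\in\Xi_{(P,\d),u}$ the induction-intertwining operator $\pi(g,\xi)$ is an isomorphism $\pi(\xi)\isom\pi(g\xi)$ (Theorem \ref{thm:indint}), and that it is unitary on $\Xi_u$; since $\mu_{Pl}$ is defined via the Plancherel decomposition of $L_2(\H)$, which is canonically attached to $\H$, the density function $\nu$ must satisfy $\nu(g\xi)=\nu(\xi)$ for all $g\in\W$ and all $\xi$ in the relevant component. (Concretely this can also be seen from the explicit product formula \cite[Definition 9.7]{DO}: the $c$-function is built from corank-one factors indexed by the restricted roots $(P,\a)\in R^P$, and the action of $g=k\times u$ permutes these factors according to the action of $u$ on the chamber system of the Weyl groupoid described in Section \ref{app:weylgroupoid}.) Thus $g$ maps the zero set $\nu^{-1}(0)\cap\Xi_{(P,\d),u}$ bijectively onto $\nu^{-1}(0)\cap\Xi_{(Q,\d^g),u}$.

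Next I would check that $g$ carries a mirror to a mirror, i.e. that it respects the stratification of the zero locus into the orbits $M_{(P,\a),\xi,u}$. The key point is that $g=k\times u$ acts on $\Xi_{(P,\d)}\cong T^P$ by an affine map whose linear part is induced by the isomorphism of root data $\Ri_P\to\Ri_Q$ underlying $u$; in particular it sends the codimension-one subtorus $T^{(P,\a)}_u\subset T_u^P$ (the identity component of the kernel of the character $(P,\a)$) to the subtorus $T^{(Q,u(\a))}_u\subset T_u^Q$, because the character $(P,\a)\in R^P$ is pulled back from $(Q,u(\a))\in R^Q$ under $u$. Hence the coset $M_{(P,\a),\xi,u}=\xi'\,T^{(P,\a)}_u$ (for a suitable basepoint $\xi'$) is sent by $g$ to $g(\xi')\,T^{(Q,u(\a))}_u=M_{(Q,u(\a)),g\xi',u}$, which is again of the prescribed form, and it lies in the zero set of $\nu$ by the equivariance established above. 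Since $g$ is invertible in the groupoid, this shows $g(\M)\subseteq\M$ for every arrow $g$, hence $g(\M)=\M$, which is the assertion.

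The main obstacle I anticipate is bookkeeping rather than anything deep: one must verify carefully that the labels $(P,\a)$ restricting roots to facets of the Weyl chamber transform correctly under the Weyl groupoid action — in particular that the notion of ``$R^P$'' and of the associated codimension-one subtorus is genuinely functorial for the arrows of $\W$ (including the $K_Q$-part, which acts by translation on $T^P$ and therefore preserves every subtorus $T^{(P,\a)}_u$, only shifting the coset). This is exactly the content of the chamber-system formalism recalled in Section \ref{app:weylgroupoid}, so modulo invoking those compatibilities the proof is short. I would also remark that the disjoint-union decomposition $\M=\coprod_{(P,\d)}\M_{(P,\d)}$ is of course \emph{not} preserved piecewise — an arrow $g\in\W_{P,Q}$ moves $\M_{(P,\d)}$ to $\M_{(Q,\d^g)}$ — but the total collection $\M$ is stable, which is all that is claimed.
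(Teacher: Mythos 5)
Your proposal is correct and follows exactly the route the paper takes: the paper's entire proof is the one-line observation that the claim is clear from the $\W$-invariance of $\nu$. Your additional bookkeeping (checking that an arrow $g=k\times u$ carries a coset of $T^{(P,\a)}_u$ to a coset of $T^{(Q,u(\a))}_u$, with the $K_Q$-part acting only by translation) is a faithful unpacking of what the paper leaves implicit, not a different argument.
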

\begin{proof}
This is clear by the $\W$-invariance of $\nu$.
\end{proof}
The next theorem is inspired by well known results of
Harish-Chandra (see \cite[Section 39]{HC3},
and also \cite{KS}, \cite{S}):
\begin{thm}\label{thm:mirror}
Let $M\in\M_{(P,\d)}$.
\begin{enumerate}
\item
There exists a unique
involution $\s_M\in\W_{(P,\d),(P,\d)}$ (the inertia group in
$\W_\Delta$ of the object $(P,\d)$) such that
$\s_M$ leaves $M$ pointwise fixed.
\item
$\s_M$ is $\W$-conjugate to an element of the form
$\s^\prime\times k^\prime$ with $\s^\prime=\s_{Q^\prime}^{P^\prime}\in
\Wf_{P^\prime,P^\prime}$ an elementary conjugation
with $P^\prime\subset Q^\prime$ self-opposed
(see paragraph \ref{subsub:eltconj}), and $k^\prime\in
K_{P^\prime}$ such that
$\s^\prime\times k^\prime=(k^\prime)^{-1}\times \s^\prime$.
\item
The rational function $\xi\to c(\xi)$ on $\Xi_{(P,\d),u}$ has a
pole of order one at $M$.
\item For all $\xi\in M$, the intertwining operator $\pi(\s_M,\xi)$
is a scalar.
\end{enumerate}
The element $\s_M$ is called the reflection in $M$.
\end{thm}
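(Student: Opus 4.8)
The plan is to reduce everything to the corank one situation, where the statements become a matter of explicit computation with rank one affine Hecke algebras. First I would use the product formula for the Plancherel density $\nu$ from \cite[Definition 9.7]{DO}: along a mirror $M=M_{(P,\a),\xi,u}\subset\Xi_{(P,\d),u}$, the only factor of $\nu$ that vanishes is the corank one factor attached to the restricted root $(P,\a)\in R^P$. Passing to the corresponding corank one ``parabolic'' induction datum, I am reduced to a rank one affine Hecke algebra $\H_{(P,\a)}$ (of type $A_1$ or, in the $C^{(1)}_n$-summand situation where $R_1\neq R_0$, of type $BC_1$) with a discrete series $\d$ of its semisimple quotient, twisted by a parameter running over a one-dimensional torus $T^P$ one of whose codimension one subtori is $T^{(P,\a)}_u$. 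In this rank one picture the mirror $M$ is a single point (mod the ambient torus action), and the candidate reflection $\s_M$ is the nontrivial element of the rank one Weyl groupoid, possibly composed with an element of $K$.

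For item (i), existence of an involution $\s_M\in\W_{(P,\d),(P,\d)}$ fixing $M$ pointwise: I would observe that the condition $g(\xi)=\xi$ for all $\xi\in M$ forces $g=k\times u\in\W_{P,P}$ with $u$ acting trivially on the hyperplane $\mathrm{ker}(P,\a)$, hence $u$ is either the identity or the reflection $\s_{(P,\a)}$ in that hyperplane; since $M$ is not a whole component, $u$ must be the reflection, and then $k\in K_P$ is uniquely determined by the requirement $\d^{k\times u}\simeq\d$ together with the pointwise-fixing of $M$ (which pins down the $T^P$-action). Uniqueness follows because two such elements would differ by an element of $\W_{P,P}$ fixing $M$ pointwise whose Weyl part is trivial, and such an element is forced to be the identity by genericity of points of $M$. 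For item (ii), $\W$-conjugacy of $\s_M$ to an elementary conjugation $\s^{P'}_{Q'}\times k'$ with $P'\subset Q'$ self-opposed: here I would invoke the explicit description of $\W$ and its chamber system from the Appendix (Section \ref{app:weylgroupoid}), conjugating $\s_M$ so that its Weyl part becomes an elementary conjugation in the sense of paragraph \ref{subsub:eltconj}; the commutation relation $\s'\times k'=(k')^{-1}\times\s'$ is exactly the condition that $\s_M$ be an involution, read off from the composition rule $(k\times u)\circ(l\times v)=k(u(l))\times uv$.

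For items (iii) and (iv) I would argue directly in the rank one algebra. The $c$-function is the product of rank one factors of the form $(1-\theta_{-\a})/n_\a$-type expressions built from the intertwiners $\i_s^0$; on the subtorus cutting out $M$, exactly one such factor contributes a simple pole, giving (iii) (this is essentially the statement that $\nu=(c(\xi)c(\xi^{-1}))^{-1}$ is regular and nonzero while $c(\xi)$ alone has a simple pole there, already visible from \cite[Proposition 9.8]{DO}). For (iv), on $M$ the normalized intertwining operator $\pi(\s_M,\xi)$ is an $\H$-endomorphism of the irreducible module $\pi(\xi)$ (irreducibility being forced because $\nu\neq 0$ at $\xi\in M$ means $\xi$ lies off all other mirrors, at least generically on $M$, so $\pi(\xi)$ is irreducible by the rank one analysis), hence a scalar by Schur; then one extends from generic $\xi\in M$ to all of $M$ by continuity, using Theorem \ref{thm:indint} that $\pi(g,\xi)$ is regular in $\xi$ on $\Xi_u$. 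The main obstacle I anticipate is item (ii): carefully organizing the conjugation in the Weyl groupoid so that the Weyl part lands in the standard form of an elementary conjugation with $P'\subset Q'$ self-opposed, and verifying that the $K$-component $k'$ can be taken to satisfy the stated commutation relation --- this is the combinatorially delicate part and will lean heavily on the chamber-system machinery of the Appendix.
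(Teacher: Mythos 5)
There is a genuine gap, and it is the central one: your proposal never uses the one analytic input on which the paper's entire proof rests, namely that for any smooth section $f$ supported near a point of $\Xi_{(P,\d),u}$ the averaged section $p_\W(cf)$ of equation (\ref{eq:av}) is again smooth (Theorem 5.3 of \cite{DO}; the introduction calls this the ``main technical thrust''). All four assertions are extracted from this by testing against suitably localized $f$. In (i) your argument only describes what a pointwise-fixing element $k\times u$ \emph{would} have to look like; it does not produce one. Existence is exactly the Harish-Chandra-type statement that the vanishing of $\nu$ along $M$ forces a symmetry $\d^{k\times\s}\simeq\d$ fixing $M$, and the paper gets it by contradiction: if no nontrivial element fixed $M$ pointwise one could choose $f$ supported in a small $U$ with $wU\cap U=\emptyset$ for $w\neq e$, so that $p_\W(cf)$ reduces to $|\W_\xi|^{-1}cf$ on $U$, which is not smooth since $c$ is singular along $M$. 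Your rank-one reduction does not supply this; even in corank one the equivalence $\d^{\s\times k}\simeq\d$ at a zero of the Plancherel factor is the nontrivial point.

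Part (iv) of your proposal is moreover circular. By \cite[Corollary 5.4]{DO}, for generic $\xi\in M$ (lying on no other mirror) one has $\W_{\xi,\xi}=\{e,\s_M\}$ and $\operatorname{End}_\H(\pi(\xi))$ is spanned by $\operatorname{Id}$ and $\pi(\s_M,\xi)$; hence irreducibility of $\pi(\xi)$ is \emph{equivalent} to $\pi(\s_M,\xi)$ being scalar, so you cannot invoke irreducibility ``by the rank one analysis'' to then apply Schur. The paper instead writes $\chi_\a^{-1}f=ch$ with $h$ smooth and uses smoothness of $p_\W(\chi_\a^{-1}f)$ to force, at $t=e$, the identity $f(\xi)=\pi(\s_M,\xi)^{-1}f(\xi)\pi(\s_M,\xi)$ for \emph{all} $f$, so that $\pi(\s_M,\xi)$ is central in $\operatorname{End}(V)$. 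Likewise (iii) is not ``visible from Proposition 9.8'': regularity and positivity of $\nu=(c(\xi)c(\xi^{-1}))^{-1}$ on $\Xi_u$ does not by itself bound the pole order of $c$ along $M$; the paper rules out a pole of order $\geq 2$ by testing $p_\W(\chi_\a^{-2}f)$ with $f(\xi)=\operatorname{Id}_V$ and observing that the two terms in (\ref{eq:ord2}) then reinforce rather than cancel. Your treatment of (ii) via the chamber system of the Appendix is consistent with the paper, but it presupposes (i).
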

\begin{proof}
The proof of this result is based on the following aspect of the main
Theorem 5.3 of \cite{DO}. Let $V=i(V_\d)$ denote the vector space
on which all the induced representations $\pi(\xi)$ (with
$\xi\in\Xi_{(P,\d)}$) are realized in the compact realization. Let
\begin{equation}
f:\Xi_{(P,\d),u}\to\operatorname{End}(V)
\end{equation}
be a smooth section, and extend this function by $0$ on the other
components of $\Xi_u$. Then Theorem 5.3 of \cite{DO} implies that the
function $p_\W(cf)$ on $\Xi_u$ (where $c$ denotes the $c$-function on
$\Xi_u$) defined by
\begin{equation}\label{eq:av}
p_\W(cf)(\xi)=|\W_\xi|^{-1}
\sum_{g\in \W_\xi}\pi(g,\xi)^{-1}(cf(g(\xi)))\pi(g,\xi)
\end{equation}
is again smooth on $\Xi_u$.

Recall that (by
the Maass Selberg relations, see \cite[Proposition 9.8]{DO}) the
function $c$ vanishes on $M$ since $\nu$ vanishes on $M$.
Let $\xi\in M$ and let $\W_{\xi,\xi}\subset\W_{(P,\d),(P,\d)}$
denote the subgroup of elements which fix $\xi$. If the identity is
the only element of $\W$ which fixes the elements of $M$ pointwise then
$\W_{\xi,\xi}=\{e\}$ for generic $\xi\in M$. In that case
there would exist
a small open neighborhood $U\ni\xi$ such that $wU\cap U=\emptyset$
if $w\in\W_\xi$ but $w\not=e$. Hence if we take $f$ such that its support
is contained in $U$ but with $f(\xi)\not= 0$ the expression
(\ref{eq:av}) will not be smooth on $U$, a contradiction.

We conclude that there exists an element $\s\in\W_{(P,\d),(P,\d)}$ which fixes
$M$ pointwise and which is not the identity on $\Xi_{(P,\d)}$.
Thus locally in the tangent space $i\mathfrak{a}^P$ of $\Xi_{(P,\d)}$ at $\xi\in M$,
$\s$ must be given by an element of $\Wf_{P,P}$ that  fixes the hyperplane in
$i\mathfrak{a}^P$ which corresponds to $M$ under the exponential mapping
(where we choose $\xi$ as the identity element of $\Xi_{(P,\d)}$).
This uniquely
determines $\s$ on $\Xi_{(P,\d),u}$ and shows that $\s$ is an
involution. It also follows that $\s$ is $\W$-conjugate to an
involutive elementary conjugation (see paragraph \ref{subsub:eltconj})
composed with an element of $\K$
such that the composition is still an involution, proving both
(i) and (ii).

Let us now consider (iii).  Take a generic element $\xi=(P,\d,t_0)$
of $M$ such that $\W_{\xi,\xi}=\{e,\s_M\}$ and let $U$ be
a small open neighborhood of $\xi$ which is invariant for $\s_M$ and
has the property that $wU\cap U\not=\emptyset$ iff $w(\xi)=\xi$.
For $t\in T^P_u$ we write $\xi_t=(P,\d,tt_0)$.
By (ii), there exists a unique pair $(P,\a), (P,-\a)\in R^P$ of opposite
roots such that the function $\chi_\a:\Xi_{(P,\d)}\to\C^\times$ defined by
$\chi_\a(\xi_t):=\a(t)-1$ has the property that
$M\cap U=\{\xi\mid\chi(\xi)=0\}\cap U$.
Observe that $\chi_\a(\s_M(\xi_t))=-\a(t)^{-1}\chi_\a(\xi_t)$. Let
$U_T\subset T_u^P$ denote the open neighborhood of $e\in T^P_u$ such
that $\xi_t\in U$ iff $t\in U_T$.

Suppose now that the order of the pole of $c$ at $M$ is larger than $1$.
Then for an arbitrary smooth section $f$ with support in $U$ as
before, there exists a smooth section $h$ with support in $U$ such
that $\chi_\a^{-2}f=ch$.
Hence $p_\W(\chi_\a^{-2}f)$ is smooth by Theorem 5.3 of \cite{DO}
(see (\ref{eq:av})). In view of the choice of $U$ and
(\ref{eq:av}) this implies that the expression
\begin{equation}\label{eq:ord2}
\chi_\a^{-2}(\xi_t)(f(\xi_t)+
\a(t)^{2}\pi(\s_M,\xi_t)^{-1}f(\s_M(\xi_t))\pi(\s_M,\xi_t))
\end{equation}
is smooth as a function of $t\in U_T$, for any choice of $f$.
(Recall that $\pi(\mathfrak{s}_M,\xi)$ is smooth and
invertible as a function of $\xi\in\Xi_u$,
cf. Theorem \ref{thm:indint})
But if we choose $f$ such that $f(\xi)=\operatorname{Id}_V$ we see that
this is impossible. This proves (iii).

Let us finally prove (iv). We use the same set-up as above in the
proof of (iii), but now with $\chi_\a^{-1}f=ch$ for some smooth
section $h$. The equation (\ref{eq:ord2}) now becomes
\begin{equation}\label{eq:ord1}
\chi_\a^{-1}(\xi_t)(f(\xi_t)-
\a(t)\pi(\s_M,\xi_t)^{-1}f(\s_M(\xi_t))\pi(\s_M,\xi_t)),
\end{equation}
and again we know that this should be smooth as a function of
$t\in U_T$. This implies at $t=e$ that
\begin{equation}\label{eq:comm}
f(\xi)-\pi(\s_M,\xi)^{-1}f(\xi)\pi(\s_M,\xi)=0
\end{equation}
for all smooth sections $f$ supported on $U$. But for any
$A\in\operatorname{End}(V)$ there exists such a smooth section
with $f(\xi)=A$, thus equation (\ref{eq:comm}) implies that
$\pi(\s_M,\xi)$ is a scalar.
\end{proof}
\begin{dfn}\label{dfn:Wnul} Let $\xi\in\Xi_u$.
We denote by $\W^m_{\xi,\xi}\subset \W_{\xi,\xi}$ the subgroup generated by
the mirror reflections $\s_M$ with $M\in\M$ such that $\xi\in\M$.
The subgroupoid $\W^m$ whose set of objects is $\Xi_u$ and whose
set of arrows consists of the union of the sets $\W^m_{\xi,\xi}$ is
a normal subgroupoid of $\W$.
\end{dfn}
The statement that $\W^m$ is normal in $\W$ (i.e. invariant for
conjugation in $\W$) follows immediately from the fact that
$\M$ is $\W$-invariant.

The isotropy group $\W_{\xi,\xi}$ acts linearly on $\mathfrak{a}^P$ by
identifying $\mathfrak{a}^P$ with the tangent space of $\Xi_{(P,\d),u}$
at $\xi$ via the local diffeomorphism
\begin{align}
\mathfrak{a}^P &\to \Xi_{(P,\d),u}\\
x &\to(\xi)_{\exp(2\pi i x)}
\end{align}
centered at $\xi$.
\begin{dfn} Let $\xi\in\Xi_{(P,\d),u}$ and
consider the subset $R^{(P,\d)}_\xi\subset R^P$ consisting of the roots
$(P,\a)$ such that the zero set of the function $\chi_\a$ defined
by $\chi_\a(\xi_t)=\a(t)-1$ is locally near $\xi$ equal to a mirror
$M_{(P,\a),\xi}$ containing $\xi$.
\end{dfn}
\begin{prop}\label{prop:root}
The set $R^{(P,\d)}_\xi\subset\mathfrak{a}^{P,*}$
is a reduced integral root system such that
$\W^m_{\xi,\xi}\simeq W(R^{(P,\d)}_\xi)$.
\end{prop}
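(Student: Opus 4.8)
The plan is to reduce the statement to the known classification theory of root systems by identifying $R^{(P,\d)}_\xi$ with the set of roots whose reflections generate the reflection group $\W^m_{\xi,\xi}$, and then to show that this set satisfies the axioms of a reduced integral root system. First I would observe that $\W^m_{\xi,\xi}$ acts on the tangent space $\mathfrak{a}^P$ via the local diffeomorphism centered at $\xi$ (Definition \ref{dfn:Wnul} and the preceding remarks), and that each generator $\s_M$ with $M\ni\xi$ acts as an orthogonal reflection in that space: by Theorem \ref{thm:mirror}(ii), $\s_M$ is $\W$-conjugate to an involutive elementary conjugation $\s'\times k'$ with $P'\subset Q'$ self-opposed, hence acts on the relevant linear space as a genuine reflection, and by Proposition \ref{prop:Minv} together with the fact that $\W$ acts by isometries for the relevant invariant inner product on $\mathfrak{a}^P$, the reflection $\s_M$ is orthogonal with respect to that inner product. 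Thus $\W^m_{\xi,\xi}$ is a \emph{finite orthogonal reflection group} acting on $\mathfrak{a}^P$.

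Next I would pin down the reflecting hyperplanes. For each mirror $M=M_{(P,\a),\xi}$ through $\xi$, the function $\chi_\a(\xi_t)=\a(t)-1$ cuts out $M$ locally, so under the exponential chart the reflecting hyperplane of $\s_M$ is exactly $\ker(P,\a)\subset\mathfrak{a}^P$, i.e. the vanishing locus of the linear functional $(P,\a)\in\mathfrak{a}^{P,*}$. Conversely every reflection in $\W^m_{\xi,\xi}$ arises this way, since the reflections in a finite reflection group are precisely the conjugates of the generating reflections, and conjugating $\s_M$ by an element $w\in\W_{\xi,\xi}$ carries $M$ to another mirror through $\xi$ (using $\W$-invariance of $\M$), hence carries $(P,\a)$ to another element of $R^P$ in the set $R^{(P,\d)}_\xi$ — this is where one checks that $R^{(P,\d)}_\xi$ is stable under $\W^m_{\xi,\xi}$. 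So the roots in $R^{(P,\d)}_\xi$ are in bijection with the reflecting hyperplanes of the finite reflection group $\W^m_{\xi,\xi}$, up to the scaling ambiguity of each root.

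With this identification in hand, the standard theory (e.g. the treatment of root systems associated to finite reflection groups) gives that the set of $(P,\a)\in R^{(P,\d)}_\xi$, each taken with a canonical normalization, forms a root system: closure under the reflections, the rationality/integrality coming from the fact that $R^P$ consists of restrictions of the integral roots of $R_0$ (the roots $(P,\a)$ lie in the weight lattice attached to $X^P$, cf. Section \ref{app:weylgroupoid}), and $\W^m_{\xi,\xi}\simeq W(R^{(P,\d)}_\xi)$ because a finite reflection group is canonically the Weyl group of the root system built from its reflecting hyperplanes. To see that the system is \emph{reduced}: for a pair of proportional roots $(P,\a),(P,\b)\in R^{(P,\d)}_\xi$ with the same reflecting hyperplane, both come from corank-one factors of the $\nu$-function, and by Theorem \ref{thm:mirror}(i) the reflection $\s_M$ in that hyperplane is unique inside $\W_{(P,\d),(P,\d)}$; one then argues — using that the ambient non-reduced root system $\rnr$ restricted to the facet is controlled and that the $c$-function pole at $M$ has order exactly one (Theorem \ref{thm:mirror}(iii)) — that only one normalized root (not its double) can occur. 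I expect the main obstacle to be precisely this last point: carefully checking reducedness and the integrality normalization, because it requires tracking how the possibly non-reduced system $\rnr$ and the root system $R^P$ of restrictions behave on facets of the Weyl chamber, and ruling out a $\{\a,2\a\}$-type degeneration using the order-one pole of $c$; the reflection-group structure itself (closure, $\W^m_{\xi,\xi}\simeq W(R^{(P,\d)}_\xi)$) is then essentially formal.
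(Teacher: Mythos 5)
Your overall skeleton — that $\W^m_{\xi,\xi}$ is a finite real reflection group on $\mathfrak{a}^P$ whose reflecting hyperplanes are the kernels of the $(P,\a)\in R^{(P,\d)}_\xi$, so that up to normalization these roots form the root system of $\W^m_{\xi,\xi}$ with $\W^m_{\xi,\xi}\simeq W(R^{(P,\d)}_\xi)$ — agrees with the first part of the paper's proof. But the two substantive points are not correctly handled. The integrality of the Cartan numbers $\langle(P,\b),(P,\a)^\vee\rangle$ does \emph{not} follow from the fact that the $(P,\a)$ are restrictions of roots of the integral system $R_0$ (lying in a lattice is not the same as having integral Cartan pairings, and the reflection attached to $(P,\a)$ is the elementary conjugation $w_{P\cup\{\a\}}w_P$, not a restriction of $s_\a$). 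The paper's argument is genuinely needed here: by Theorem \ref{thm:cbs} each mirror $M_{(P,\tilde\a),\xi}$ can be moved by $\Wf_P$ to one attached to a simple root $\ga\in F_0\setminus Q$, so that one may replace $\tilde\a$ by a root $\a$ with $P\cup\{\a\}$ a simple system of a parabolic subsystem in which, by Theorem \ref{thm:mirror}(ii), $P$ is self-opposed; then Carter's computation gives
$\s_M(\b)=w_{P\cup\{\a\}}w_P(\b)\in(\b+\l\a)+\af_P$ with $\l\in\mathbb{Z}$, which is exactly the integrality statement. Your proposal contains no substitute for this step.

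On reducedness you propose an analytic argument via the order-one pole of the $c$-function and the structure of $\rnr$ on facets, flag it as the main obstacle, and leave it open. In fact this point is immediate and requires no analysis: by its definition in the appendix, $R^P$ consists only of \emph{primitive} restrictions, so no ray of $\mathfrak{a}^{P,*}$ contains two distinct elements of $R^P$, and a fortiori $R^{(P,\d)}_\xi\subset R^P$ is reduced. So the proposal misidentifies where the difficulty lies: reducedness is definitional, while integrality — which you treat as essentially automatic — is the step that carries the real content of the proposition.
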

\begin{proof}
Recall the definitions of Appendix \ref{app:weylgroupoid}.
The group  $\W^m_{\xi,\xi}$ is by definition generated by the mirror
reflections in the mirrors of the form $M_{(P,\a),\xi}$ with
$(P,\a)\in R^{(P,\d)}_\xi$, and it is clear that $R^{(P,\d)}_\xi$
is invariant for $\W^m_{\xi,\xi}$. Therefore up to normalization
we see that $R^{(P,\d)}_\xi$ is the root system of the finite real
reflection group $\W^m_{\xi,\xi}$ (in the sense of \cite[Section 2.2]{C}).
Let us now consider the integrality of this root system.
By Theorem \ref{thm:cbs}, for all $(P,\tilde\a)\in R^{(P,\d)}_\xi$
the $\Wf_P$-orbit of $M_{(P,\tilde\a),\xi}$ contains a mirror
$M_{(Q,\ga),\xi^\prime}$ such that $(Q,H_\ga):=\operatorname{Ker}(Q,\ga)$
is the hyperplane in $\af^Q$ associated to a simple root
$\ga\in F_0\backslash Q$.
This implies that the $\Wf_P$-orbit of $\tilde\a$ contains a root
$\tilde{\ga}=w\tilde\a\in R_{Q\cup\{\ga\}}$ such that
$\tilde\ga\in\ga+\mathbb{Z}R_Q$. If we put $\a=w^{-1}\ga$ then we have
$(P,H_{\tilde\a})=(P,H_\a)$, and moreover $P\cup\{\a\}$ is a system of
simple roots for a (possibly non-standard) parabolic subsystem of
roots. Moreover, from Theorem
\ref{thm:mirror} we see that the elementary conjugation $\s^P_{P^\prime}$
of this parabolic root system leaves $P$ fixed, i.e. $P$ is self opposed in
$P^\prime:=P\cup\{\a\}$ (see paragraph \ref{subsub:eltconj}).
Now let us fix a $\W^m_{\xi,\xi}$-invariant
inner product in $\mathfrak{a}^P$. We need to show that
$\langle\overline{\a},\overline{\b}\rangle\in\mathbb{Z}$ holds
for all $\overline{\a}:=(P,\a)\in R^{(P,\d)}_\xi$ and
$\overline{\b}:=(P,\b)\in R^{(P,\d)}_\xi$. But since both $\a$ and
$\b$ can be replaced by roots which form a simple system
of roots together with $P$ such that $P$ is self opposed in
these systems, the integrality assertion follows from the
proof of Theorem 10.4.2 of \cite{C}: Let $M=M_{(P,\a),\xi,u}$ then
\begin{equation}
\s_{M}(\b):=\s^{P\cup\{\a\}}_{P}(\b)=w_{P\cup\{\a\}}w_P(\b)
\in w_{P\cup\{\a\}}(\b+\af_P)=(\b+\l\a)+\af_P
\end{equation}
with $\l\in\mathbb{Z}$ as desired.
Recall that $R^P$ consists only of {\it primitive} restrictions of
roots of $R_0\backslash R_P$. Therefore it is now clear that
$R^{(P,\d)}_\xi$ is integral and reduced.
\end{proof}
\begin{dfn}
Let $R^{(P,\d)}_{\xi,+}=R^{(P,\d)}_\xi\cap R^P_+$, and let
$\mathfrak{a}^{P,+}_\xi\subset \mathfrak{a}^P$ be the positive
Weyl chamber of $R^{(P,\d)}_{\xi,+}$.
We define
\begin{equation}
\mathfrak{R}_\xi=\{w\in\W_{\xi,\xi}\mid
w(\mathfrak{a}^{P,+}_\xi)=\mathfrak{a}^{P,+}_\xi\}.
\end{equation}
\end{dfn}
\begin{prop}
The subgroup $\mathfrak{R}_\xi\subset \W_{\xi,\xi}$ is a
complement for the normal subgroup $\W^m_{\xi,\xi}$. Hence
\begin{equation}
\W_{\xi,\xi}=\mathfrak{R}_\xi\ltimes\W^m_{\xi,\xi}.
\end{equation}
\end{prop}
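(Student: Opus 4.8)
The plan is to recognize this as the standard fact that a finite real reflection group $W_{\xi,\xi}$ acting on $\mathfrak{a}^P$, with reflection subgroup $\W^m_{\xi,\xi}$ generated by the reflections it contains, admits the stabilizer of a chamber as a complement. First I would note, using Proposition~\ref{prop:root}, that $\W^m_{\xi,\xi}\simeq W(R^{(P,\d)}_\xi)$ is precisely the subgroup of $\W_{\xi,\xi}$ generated by all reflections of $\W_{\xi,\xi}$ in $\mathfrak{a}^P$: indeed every element of $\W_{\xi,\xi}$ acting as a reflection on $\mathfrak{a}^P$ fixes a hyperplane, that hyperplane is (locally at $\xi$) a mirror by Theorem~\ref{thm:mirror}(i) combined with the definition of $R^{(P,\d)}_\xi$ (a reflection fixing a hyperplane on which $\nu$ does not vanish would violate the smoothness of $p_\W(cf)$, exactly as in the proof of Theorem~\ref{thm:mirror}), so the reflection lies in $\W^m_{\xi,\xi}$. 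Hence $\W^m_{\xi,\xi}$ is normal in $\W_{\xi,\xi}$ (it is the subgroup generated by all reflections, a characteristic subgroup), which also re-proves the normality asserted in Definition~\ref{dfn:Wnul}.

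Next I would invoke the classical chamber-transitivity result for finite reflection groups: $\W^m_{\xi,\xi}=W(R^{(P,\d)}_\xi)$ acts simply transitively on the chambers of the hyperplane arrangement of $R^{(P,\d)}_\xi$ in $\mathfrak{a}^P$ (see \cite[Section~2.2]{C}). Since $\W_{\xi,\xi}$ normalizes $\W^m_{\xi,\xi}$, it permutes the reflection hyperplanes $\{H_\a\mid (P,\a)\in R^{(P,\d)}_\xi\}$ — equivalently it permutes the set of chambers, the connected components of the complement. Therefore for any $w\in\W_{\xi,\xi}$ the chamber $w(\mathfrak{a}^{P,+}_\xi)$ is again a chamber, so by simple transitivity there is a unique $v\in\W^m_{\xi,\xi}$ with $vw(\mathfrak{a}^{P,+}_\xi)=\mathfrak{a}^{P,+}_\xi$, i.e. $vw\in\mathfrak{R}_\xi$. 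This gives $\W_{\xi,\xi}=\W^m_{\xi,\xi}\cdot\mathfrak{R}_\xi$. For the intersection: if $w\in\mathfrak{R}_\xi\cap\W^m_{\xi,\xi}$ then $w$ is an element of $\W^m_{\xi,\xi}$ stabilizing the fundamental chamber, so $w=e$ by simple transitivity again. Combined with normality of $\W^m_{\xi,\xi}$, this yields the semidirect product decomposition $\W_{\xi,\xi}=\mathfrak{R}_\xi\ltimes\W^m_{\xi,\xi}$.

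The one point that needs genuine care — and is, I expect, the main obstacle — is the very first claim: that every element of $\W_{\xi,\xi}$ that acts as a (linear) reflection on $\mathfrak{a}^P$ in fact lies in $\W^m_{\xi,\xi}$, i.e. that the hyperplane it fixes is one of the mirror hyperplanes $H_\a$, $(P,\a)\in R^{(P,\d)}_\xi$. This requires knowing that if $s\in\W_{\xi,\xi}$ fixes a hyperplane $H$ in $\mathfrak{a}^P$, then $\nu$ vanishes on the corresponding codimension-one subtorus through $\xi$; this follows by repeating the smoothness argument of the proof of Theorem~\ref{thm:mirror}: choosing a generic point $\xi'$ of that subtorus with isotropy exactly $\{e,s\}$ and a test section $f$ supported near $\xi'$, smoothness of $p_\W(cf)$ forces $c$ (hence $\nu$) to vanish there, so $H$ is locally a mirror and $s=\s_M$ for that mirror. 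Once this is in place, everything else is the standard finite-reflection-group bookkeeping, so I would keep the write-up of those parts brief and cite \cite{C} for chamber transitivity.
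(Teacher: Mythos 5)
Your second paragraph is the whole proof, and it coincides with the paper's (very terse) argument: $\W_{\xi,\xi}$ preserves the root system $R^{(P,\d)}_\xi$ of the reflection group $\W^m_{\xi,\xi}$, hence permutes its chambers, and simple transitivity of $\W^m_{\xi,\xi}$ on chambers makes the stabilizer $\mathfrak{R}_\xi$ of the positive chamber a complement. Note that both inputs you need there — normality of $\W^m_{\xi,\xi}$ and the fact that $\W_{\xi,\xi}$ permutes the hyperplanes $H_\a$, $(P,\a)\in R^{(P,\d)}_\xi$ — follow directly from Proposition \ref{prop:Minv} ($\W$-invariance of $\M$) together with the uniqueness in Theorem \ref{thm:mirror}(i): for $g\in\W_{\xi,\xi}$ one has $g\s_M g^{-1}=\s_{g(M)}$ with $g(M)$ again a mirror through $\xi$. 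This is how the paper gets normality (remark after Definition \ref{dfn:Wnul}), and it makes your detour through ``$\W^m_{\xi,\xi}$ is generated by \emph{all} reflections of $\W_{\xi,\xi}$'' unnecessary.

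That detour (your first and third paragraphs) is also where the one real problem sits. The claim that any $s\in\W_{\xi,\xi}$ acting as a reflection on $\mathfrak{a}^P$ must have a mirror as its fixed hyperplane is not established by the smoothness argument you sketch: the argument in the proof of Theorem \ref{thm:mirror} derives a contradiction from the \emph{non}-smoothness of $cf$ near a mirror when the stabilizer is too small, whereas if $c$ is regular and nonvanishing along the fixed hyperplane of $s$, then $cf$ is already smooth there and $p_\W(cf)$, being a finite average of smooth sections, is smooth with no constraint whatsoever — so no contradiction arises and nothing forces $\nu$ to vanish. Fortunately the proposition does not assert that $\mathfrak{R}_\xi$ contains no reflections, only that it is a complement to $\W^m_{\xi,\xi}$, so this unproved (and possibly false) auxiliary claim can simply be deleted; with normality obtained as above, your chamber argument is complete and correct.
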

\begin{proof}
The group $\W_{\xi,\xi}$ preserves the set $R^{(P,\d)}_{\xi}$ of roots
of the finite reflection group $\W^m_{\xi,\xi}$, and thus the choice
of a positive Weyl chamber induces a splitting of $\W_{\xi,\xi}$
as indicated.
\end{proof}
\section{The Knapp-Stein linear independence theorem}\label{sec:main}
In this section we will prove the Knapp-Stein linear independence
in the present context of affine Hecke algebras \cite{KS}, \cite{S}).

Let $\eta=(P,\d,t)\in\Xi_{(P,\d),u}$ be an $R_P$-generic
(cf. \cite[Definition 2.5]{DO}) induction parameter
in a small open $\W_{\xi,\xi}$-invariant neighborhood $U$ of
$\xi\in\Xi_{(P,\d),u}$. Let $W_Pr$ denote the central character of
$\d$. We will need to use Lusztig's first reduction theorem, in the
version as discussed in \cite{DO}; we refer the reader to
\cite[Section 2.6]{DO} and \cite{Lu} for further details.
The reduction theorem describes the
structure of the formal completion of $\H$ at the central character
$W_0(rt)$, as a matrix algebra with coefficients in the formal completion
at $\omega_t=W_P(rt)=tW_Pr$ of the Levi subalgebra $\H^P$.
The orbit $W_0(rt)$ is partitioned in equivalence classes of the form
$w\omega_t$ with $w\in W^P$. For each equivalence class
$w\omega_t$ there exits an idempotent $e_{w\omega_t}$ in the formal completion
$\bar{\H}_{W_0(rt)}$ of $\H$ at the central character $W_0(rt)$.
These idempotents form a complete orthogonal set of idempotents in
$\bar{\H}_{W_0(rt)}$. In the present context of $R_P$-generic
induction parameters the reduction theorem asserts that
$e_{\omega_t}\bar{\H}_{W_0(rt)} e_{\omega_t}=
e_{\omega_t}\bar\H^P_{\omega_t}$ where $e_{\omega_t}$
is a central idempotent on the right hand side, and that we
have a decomposition
\begin{equation}\label{eq:dec}
\bar\H_{W_0(rt)}=\bigoplus_{u,v\in W^P}
\iota_u^0{e_{\omega_t}}\bar\H^P_{\omega_t}\iota_{v^{-1}}^0
\end{equation}
which yields an isomorphism of $\bar\H_{W_0(rt)}$ and a
matrix algebra of size $N=|W^P|$ and coefficients in
$e_{\omega_t}\bar{\H}_{\omega_t}^P$.
The theorem moreover asserts in this situation
that if $w(P)=Q\in\P$ then the conjugation map
$c_w:x\to \iota_w^0 x\iota_{w^{-1}}^0$ is well defined on
$e_{\omega_t}\bar\H^P_{\omega_t}\subset \bar\H_{W_0(rt)}$
and defines an algebra isomorphism
\begin{equation}
c_{\iota_w^0}:e_{\omega_t}\bar\H^P_{\omega_t}\isom e_{w(\omega_t)}\bar\H^Q_{w(\omega_t)}
\end{equation}
which coincides with the isomorphism originating from the isomorphism of
root data $\Ri^P\isom\Ri^Q$ induced by $w$.

We also use the concept of the \emph{constant term}
of (matrix coefficients of) tempered representations along a standard
parabolic subset $P\in\P$, see \cite[Section 3.6]{DO} and \cite[Section 6]{DO}.
The subset of $w\in W^P$ such that
$e_{w\omega_t}$ contributes to the constant term $V_{\eta}^P$
along $P$ of the tempered module $V_\eta$ is equal to $W_{P,P}$
(cf. \cite[Proposition 6.12]{DO}, where we remark that $W_{P,P}=D^{P,P}$ in the
situation $Q=P$). By the Morita equivalence Proposition \ref{prop:morita}
we see that $\W_{\xi,\xi}\simeq\Wf_{\K\xi,\K\xi}\subset W_{P,P}$.
We will identify $\W_{\xi,\xi}$ with this subgroup of $W_{P,P}$ in
the rest of this section.

Choose a complete set $S$ of
representatives for the left cosets of $\W_{\xi,\xi}$
in $\W_{P,P}$. For each $s\in S$ and $\r\in\mathfrak{R}_\xi$ we define
\begin{equation}\label{eq:idem}
E_{s,\r;t}=\sum_{w\in\W^m_{\xi,\xi}}e_{sw\r\omega_t}\in\bar\H_{W_0(rt)},
\end{equation}
which is an idempotent of the formal completion of $\H$ at the
central character $W_0(rt)$. Recall that $rt$ is $R_P$-generic.
\begin{prop}\label{prop:regproj}
For all $s\in S$, $\r\in\mathfrak{R}_\xi$ and $\eta\in U$ we define a
projection $p(s,\r,\eta)$
in $V_\eta=i(V_\d)$ by $p(s,\r,\eta):=\pi(\eta,E_{s,\r;t})$.
\begin{enumerate}
\item Viewed as rational function of $\eta$, $p(s,\r,\eta)$ is
regular for $\eta\in U$.
\item For all $\eta\in U$, $\sum_{s\in S,\r\in\mathfrak{R}_\xi}p(s,\r,\eta)$
is the projection onto $V_{\eta}^P$, the constant part of
$V_{\eta}$ along $P$ (see \cite[Section 3.6]{DO} for the definition of
the constant part of a tempered module).
\item The collection of idempotents $\{p(s,\r,\eta)\}$ is mutually orthogonal
in $\operatorname{End}(i(V_\d))$ for all $\eta\in U$.
\item For all $s,\r$, and all $\eta\in U$: $p(s,\r,\eta)$ is
an endomorphism of the $\H^P$-module structure on $i(V_\d)$ obtained by
restricting $\pi(\eta)$ to $\H^P$.
\end{enumerate}
\end{prop}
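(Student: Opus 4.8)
The plan is to obtain all four statements from Lusztig's first reduction theorem, as recalled above, together with the constant term results of \cite{DO}; parts (ii)--(iv) are essentially bookkeeping, while (i) carries the real content. For (iii): since $\pi(\eta)$ has central character $W_0(rt)$, the map $\H\to\operatorname{End}(V_\eta)$ factors through $\bar\H_{W_0(rt)}$, so $\pi(\eta,-)$ is an algebra homomorphism and carries orthogonal idempotents to orthogonal idempotents. The assignment $(s,w,\r)\mapsto sw\r$ is a bijection of $S\times\W^m_{\xi,\xi}\times\mathfrak{R}_\xi$ onto $W_{P,P}$ --- this is exactly the splitting $\W_{\xi,\xi}=\mathfrak{R}_\xi\ltimes\W^m_{\xi,\xi}$ together with the choice of $S$ as a transversal --- so, $rt$ being $R_P$-generic, the $e_{sw\r\omega_t}$ in (\ref{eq:idem}) are precisely the pairwise distinct, mutually orthogonal block idempotents $e_{g\omega_t}$, $g\in W_{P,P}$; since the index sets $\{\,sw\r\mid w\in\W^m_{\xi,\xi}\,\}$ attached to distinct pairs $(s,\r)$ are disjoint, the $E_{s,\r;t}$ are mutually orthogonal idempotents, and applying $\pi(\eta,-)$ gives (iii). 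For (iv): $\pi(\eta,e_{w\omega_t})$ is the projection of $V_\eta$ onto $\bigoplus_{\chi\in w\omega_t}V_\eta[\chi]$ (generalized $\A$-weight spaces), a subspace stable under $\H^P=\langle\A,\,N_s\ (s\in W_P)\rangle$, because $\A$ acts diagonally and each $N_s$ ($s\in W_P$) carries $V_\eta[\chi]$ into $V_\eta[\chi]\oplus V_\eta[s\chi]$, so the $W_P$-orbit $w\omega_t$ of weights is preserved; hence $p(s,\r,\eta)$ lies in the commutant of $\pi(\eta)(\H^P)$. For (ii): summing (\ref{eq:idem}) over $s\in S$, $\r\in\mathfrak{R}_\xi$ and using the bijection above gives $\sum_{s,\r}E_{s,\r;t}=\sum_{g\in W_{P,P}}e_{g\omega_t}$, which by \cite[Proposition 6.12]{DO} (with $W_{P,P}=D^{P,P}$) is the idempotent whose image under $\pi(\eta)$ is the projection onto the constant part $V_\eta^P$.

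Statement (i) is the heart of the matter. A priori $p(s,\r,\eta)$ is a rational function of $\eta$ on $\Xi_{(P,\d)}$, regular on the Zariski-dense open locus of $R_P$-generic parameters, and one must show it has no pole on $U$, in particular none at $\xi$. The difficulty is that $E_{s,\r;t}$ is only a \emph{sub-sum} of a block idempotent of $\bar\H_{W_0(rt)}$, so it need not survive the degeneration of the matrix decomposition (\ref{eq:dec}) at the non-generic point $\xi$; the limit $p(s,\r,\xi)$ is not determined by the $\H^P$-module $V_\xi^P$ alone. My plan is to reduce, via the conjugation action of the normalized intertwining elements $\iota^0_w$ occurring in (\ref{eq:dec}) --- which on $\Xi_u$ are regular and invertible near $\xi$ (via (\ref{eq:normint2}) and the regularity of the intertwiners of Theorem \ref{thm:indint}) --- to the case $s=\r=e$, and then to prove that $\eta\mapsto\pi\bigl(\eta,\sum_{w\in\W^m_{\xi,\xi}}e_{w\omega_t}\bigr)$ is regular near $\xi$. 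For this I would feed in the explicit computation of the constant term along $P$ for generic parameters from \cite[Section 6.2]{DO}, which expresses $\pi(\eta,e_{g\omega_t})|_{V_\eta^P}$ ($g\in W_{P,P}$) through regular data on $\Xi_{(P,\d)}$ and the rational intertwiners $\pi(\eta,\iota^0_w)$; one reads off that the only apparent poles of the individual $\pi(\eta,e_{w\omega_t})$ near $\xi$ lie on the mirrors through $\xi$, that is, on the reflection hyperplanes of $R^{(P,\d)}_\xi$ and hence on the polar set of the $c$-function, and the symmetrization over the full reflection group $\W^m_{\xi,\xi}$ built into $E_{s,\r;t}$ cancels them. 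This is the same mechanism --- the smoothness of the $\W$-average of a $c$-multiple of a smooth section, Theorem \ref{thm:mainDO} and equation (\ref{eq:av}) --- that drives the proof of Theorem \ref{thm:mirror}, and in practice I would run an averaging argument with test sections supported near $\xi$, just as there.

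The main obstacle is precisely this last step: bounding the orders of the apparent poles of the sub-block idempotents $\pi(\eta,e_{w\omega_t})$, and checking that the \emph{partial} symmetrization over $\W^m_{\xi,\xi}$ --- rather than the full symmetrization over $\W_{\xi,\xi}$ that underlies (ii) --- already removes them. Everything there hinges on the precise form of the generic constant term of \cite[Section 6.2]{DO} and on the compatibility of the decomposition (\ref{eq:dec}) with the normalized intertwiners.
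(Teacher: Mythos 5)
Your overall architecture coincides with the paper's: parts (ii)--(iv) are handled generically by unwinding Lusztig's first reduction theorem and the constant term results of \cite{DO} (the paper does exactly your bookkeeping, and then notes that the extension of (ii)--(iv) from generic $\eta$ to all of $U$ follows from (i) together with \cite[Proposition 7.8]{DO}); and for (i) you correctly reduce to $s=\mathfrak{r}=e$ and identify the right inputs, namely the generic constant term formula of \cite[Section 6.2]{DO}, the fact that the apparent poles sit on the mirrors through $\xi$, and the need for the $\W^m_{\xi,\xi}$-symmetrization built into $E_{e,e;t}$ to cancel them.

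However, the step you flag as the ``main obstacle'' is precisely the step that carries the proof, and the mechanism you propose for it would not close the argument. The smoothness statement (\ref{eq:av}) concerns the average of $cf$ over the \emph{full} isotropy group $\W_\xi$, and says nothing about a partial sum over the subgroup $\W^m_{\xi,\xi}$; running the test-section argument of Theorem \ref{thm:mirror} therefore does not yield regularity of $\pi(\eta,E_{e,e;t})$. What the paper does instead is elementary and self-contained: writing the matrix coefficient as
$\langle a,P(\eta)b\rangle=\sum_{d\in \W^m_{\xi,\xi}}c(d\eta)\bigl(c(d\eta)^{-1}f^1_{a,b}(d\eta,1)\bigr)$
(using \cite[Lemma 6.14]{DO} and the unitarity of the intertwiners), one observes that (a) each factor $c(d\eta)^{-1}f^1_{a,b}(d\eta,1)$ is regular near $\Xi_{(P,\d),u}$ by \cite[Theorem 6.18]{DO}, (b) $c$ has poles of order at most one along the mirrors by Theorem \ref{thm:mirror}(iii), and --- crucially --- (c) the displayed sum is manifestly $\W^m_{\xi,\xi}$-\emph{invariant} as a function of $\eta$. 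A $\W^m_{\xi,\xi}$-invariant meromorphic function with at most simple poles along the reflection hyperplanes of $\W^m_{\xi,\xi}$ is automatically regular: multiplying by $\pi=\prod_{(P,\a)\in R^{(P,\d)}_{\xi,+}}(P,\a)$ produces a skew-invariant analytic function, which by the standard divisibility fact from the invariant theory of finite reflection groups is divisible by $\pi$, so the simple poles are removable. This invariance-plus-divisibility argument, rather than any averaging over test sections, is the missing idea; without it your proof of (i) --- and hence the extension of (ii)--(iv) to non-generic $\eta$ --- is incomplete.
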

\begin{proof}
For generic $\eta$ the properties (ii), (iii) and (iv) follow straightforward from
the definitions and from \cite{DO}, Sections 3.6, 6.1, 6.2 (especially Corollary 6.9,
in which one should observe that $\W_{P,P}=\{d\in D^{P,P}\mid d(P)=P\})$ and 6.3.
From this remark it is clear that (i) implies (iii), (iv). But \cite[Proposition 7.8]{DO}
implies that the projection $V_{\eta}\to V_{\eta}^P$ is also smooth
in $\eta\in\Xi_{(P,\d),u}$, and thus also (ii) will follow from the generic
case provided we know (i). Thus it remains only to prove (i).

It is obviously enough to consider the case $s=e$ and $\mathfrak{r}=e$
(replace $\xi$ by $s\xi$ and $t$ by $s\mathfrak{r}t$ in (\ref{eq:idem})).
We compute
a matrix coefficient of $P(\eta):=p(e,e,\eta)$. Let $a,b\in i(V_\d)$.
Then, using the notations of \cite[Subsection 6.2, 6.3 and 6.4 ]{DO} we have:
\begin{align}\label{eq:W0inv}
\langle a,P(\eta) b\rangle&=f_{a,P(\eta)b}(\eta,1)\\
\nonumber &=\sum_{d\in \W^m_{\xi,\xi}}f^d_{a,b}(\eta,1)\\
\nonumber &=\sum_{d\in \W^m_{\xi,\xi}}f^1_{\pi(d,\eta)a,\pi(d,\eta)b}(d\eta,1)\\
\nonumber &=\sum_{d\in \W^m_{\xi,\xi}}f^1_{a,b}(d\eta,1)\\
\nonumber &=\sum_{d\in
\W^m_{\xi,\xi}}c(d\eta)(c(d\eta)^{-1}f^1_{a,b}(d\eta,1)).
\end{align}
Here the first two equalities follow form a direct unwinding of definitions,
the third equality follows
from an application of \cite[Lemma 6.14]{DO}, the fourth is the unitarity
property \cite[Theorem 4.33]{O} of the intertwiners $\pi(d,\eta)$,
and the last one is trivial. By \cite[Theorem 6.18]{DO} the
expression $c(d\eta)^{-1}f^1_{a,b}(d\eta,1)$ is regular for
$\eta$ in a small tubular neighborhood of $\Xi_{(P,\d),u}$.
By Theorem 6.3 (iii), the singularities of $c(d\eta)$ for
$d\in\W^m_{\xi,\xi}$ and for $\eta\in U$ are poles of order at most
one along the mirrors $M$ of $\W^m_{\xi,\xi}$ which contain $\xi$.
On the other hand, the expression on the right hand
side of the last equality of equation (\ref{eq:W0inv})
also shows that $\langle a,P(\eta)
b\rangle$ is $\W^m_{\xi,\xi}$-invariant as a function of $\eta$.
The product of the function
$U\ni\eta\to\langle a,P(\eta) b\rangle$ by
\begin{equation}
\pi:=\prod_{(P,\a)\in R^{(P,\d)}_{\xi,+}}(P,\a)
\end{equation}
extends to a $\W^m_{\xi,\xi}$-skew invariant analytic function on $U$.
It is a well known basic fact from the invariant theory of finite real
reflection groups that a $\W^m_{\xi,\xi}$-skew invariant analytic function
on $U$ is divisible by $\pi$. This implies that the apparent first order poles of
$\langle a,P(\eta) b\rangle$ along the mirrors of $\W^m_{\xi,\xi}$
are removable themselves. Therefore $\langle a,P(\eta)
b\rangle$ extends to an analytic function of $\eta\in U$.
\end{proof}
\begin{cor}\label{cor:subVP}
\begin{enumerate}
\item
For all $\eta\in U$ we have a decomposition
\begin{equation}
V_{\eta}^P=\bigoplus_{s\in S,\r\in\mathfrak{R}_\xi}V^P_{s,\r,\eta}
\end{equation}
of the $\H^P$-module $V_\eta^P$ as a direct sum of
$\H^P$-submodules $V^P_{s,\r,\eta}$ defined by
$V^P_{s,\r,\eta}:=p(s,\r,\eta)(V_\eta)$.
\item
For all $s\in S,\r\in\mathfrak{R}_\xi$, all the irreducible subquotients of the
finite length $\H^P$-module $V^P_{s,\r,\xi}$ are isomorphic to
$(\d^s)_{st^0}$.
\end{enumerate}
\end{cor}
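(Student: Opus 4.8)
The statement to prove is Corollary \ref{cor:subVP}, which describes the constant term $V_\eta^P$ of the induced module and identifies the irreducible subquotients of its pieces. Part (i) is essentially a restatement of Proposition \ref{prop:regproj}: by part (ii) of that proposition the sum $\sum_{s,\r}p(s,\r,\eta)$ is the projection onto $V_\eta^P$, and by part (iii) the $p(s,\r,\eta)$ are mutually orthogonal idempotents; since by part (iv) each $p(s,\r,\eta)$ commutes with the $\H^P$-action on $i(V_\d)$ restricted via $\pi(\eta)$, the images $V^P_{s,\r,\eta}:=p(s,\r,\eta)(V_\eta)$ are $\H^P$-submodules and their direct sum is $V_\eta^P$. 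So (i) is immediate from the previous proposition and needs only a sentence or two of bookkeeping.

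For part (ii) the plan is to first compute everything at a generic parameter $\eta$ using Lusztig's first reduction theorem as set up just before Proposition \ref{prop:regproj}, and then pass to $\xi$ by continuity/semicontinuity of composition factors. At generic $\eta=(P,\d,tt_0)$ the idempotent $E_{s,\r;t}=\sum_{w\in\W^m_{\xi,\xi}}e_{sw\r\omega_t}$ cuts out, inside the matrix-algebra description (\ref{eq:dec}) of $\bar\H_{W_0(rt)}$, exactly the block corresponding to the equivalence classes $sw\r\omega_t$ with $w\in\W^m_{\xi,\xi}$. By the reduction theorem the summand $e_{sw\r\omega_t}\bar\H_{W_0(rt)}e_{sw\r\omega_t}$ is isomorphic to $e_{sw\r\omega_t}\bar\H^{P'}_{sw\r\omega_t}$ for $P'=sw\r(P)$, and the conjugation isomorphisms $c_{\iota_{w'}^0}$ identify the $\H^P$-module structure on $p(s,\r,\eta)(V_\eta)$ with the module $(\d^s)_{st^0}$ twisted by $w\r$; but $\r\in\mathfrak{R}_\xi$ and $w\in\W^m_{\xi,\xi}$ both fix $\xi$, hence fix $[\d]$ up to the relevant isomorphism, so at $\eta$ lying over $\xi$ all these twists give isomorphic (or at worst isomorphic-after-specialization) $\H^P$-modules, all equivalent to $(\d^s)_{st^0}$. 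The key input here is the identification $\W_{\xi,\xi}\simeq\Wf_{\K\xi,\K\xi}\subset W_{P,P}$ from the Morita equivalence (Proposition \ref{prop:morita}) together with \cite[Proposition 6.12]{DO}, which tells us precisely which idempotents $e_{w\omega_t}$ survive in the constant term.

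The main obstacle I expect is the passage from generic $\eta$ to the special point $\xi$: for generic $\eta$ the module $V^P_{s,\r,\eta}$ is literally a sum of copies of an irreducible $(\d^s)_{s\r t^0_{(\eta)}}$, but at $\xi$ the parameters $s\r t^0$ collapse and $V^P_{s,\r,\xi}$ need only be of finite length with all composition factors equal to $(\d^s)_{st^0}$ — one cannot expect semisimplicity. The careful point is that the regularity of $p(s,\r,\eta)$ in $\eta\in U$ (Proposition \ref{prop:regproj}(i)) guarantees that $V^P_{s,\r,\eta}$ forms a vector bundle of constant rank over $U$, so its rank at $\xi$ equals the generic rank, and the $\H^P$-module structure at $\xi$ is the specialization of the generic family; then a standard upper-semicontinuity argument for composition factors in a continuous family of finite-dimensional modules, combined with the fact that the generic composition factor $(\d^s)_{s\r t^0}$ specializes exactly to $(\d^s)_{st^0}$ (because $\r$ and $\W^m_{\xi,\xi}$ fix $\xi$), pins down all composition factors of $V^P_{s,\r,\xi}$. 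I would also want to invoke \cite[Theorem 6.18]{DO} and the constant-term computations of \cite[Section 6]{DO} to make the identification of $\H^P$-module structures precise rather than merely up to composition series, but for the statement as phrased (subquotients) the semicontinuity argument suffices.
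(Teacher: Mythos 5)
Your proposal is correct and follows essentially the same route as the paper: part (i) is read off directly from Proposition \ref{prop:regproj}, and part (ii) is established by computing $V^P_{s,\r,\eta}$ at generic $\eta$ via Lusztig's first reduction theorem as a direct sum of the irreducibles $(\d^s)_{sw\r t}$ ($w\in\W^m_{\xi,\xi}$) and then specializing to $t=t_0$ using the smooth dependence on $\eta$. The paper packages your ``semicontinuity of composition factors'' step as the equivalent observation that the character of $V^P_{s,\r,\xi}$ is the limit of the generic characters, hence equals $|\W^m_{\xi,\xi}|$ times the character of $(\d^s)_{st^0}$, which determines the subquotients.
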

\begin{proof}
(i) This is a direct consequence of Proposition \ref{prop:regproj}.

(ii) Assume that $t$ is such that $\eta=(P,\d,t)\in U$ is generic.
We have according to \cite[equation (3.6)]{DO} that
\begin{equation}
i(V_\d)\simeq\bigoplus_{u\in W^P}\iota_u^0e_{\omega_t}
\otimes V_{\d_t}.
\end{equation}
From (\ref{eq:idem}) and the orthogonality of the idempotents we
then conclude that
\begin{equation}
V^P_{s,\r,\eta}=\bigoplus_{w\in \W^m_{\xi,\xi}} e_{s w
\mathfrak{r}\omega_t}\iota_{s w \mathfrak{r}}^0 \otimes V_{\d_t}
\end{equation}
Using the definition of the normalized intertwining operators
(see Remark \ref{eq:normint2} and \cite[equation (3.7)]{DO}) we see
that this is isomorphic to
\begin{equation}
V^P_{s,\r,\eta}=\bigoplus_{w\in \W^m_{\xi,\xi}}
\pi(\mathfrak{r}^{-1} w^{-1} s^{-1},sw\mathfrak{r}(\eta))
(e_{\omega_{sw\mathfrak{r}t}}\otimes V_{\d^s_{sw\mathfrak{r}t}}),
\end{equation}
so that the $\H^P$-module $V^P_{s,\r,\eta}$ is isomorphic to a direct sum
of the irreducible modules $\H^P$-modules $(\d^{s})_{sw\r t}$, where
$w$ runs over all the elements of $\W^m_{\xi,\xi}$.
If we substitute $t=t_0$ (this corresponds to
taking $\eta=\xi$) then each of these irreducible summands
coincides with $(\d^s)_{st^0}$. Hence the character of $V^P_{s,\r,\xi}$
(since $V^P_{s,\r,\eta}$ depends smoothly on $\eta$) is simply
$|\W^m_{\xi,\xi}|$ times the character of
$(\d^s)_{st^0}$. Therefore all the irreducible subquotients of the
finite length $\H^P$-module $V^P_{s,\r,\xi}$ are isomorphic to
$(\d^s)_{st^0}$.
\end{proof}
\begin{cor}\label{cor:uniqueirrsub}
The $\H^P$-module $V^P_{s,\r,\xi}$ has a unique irreducible
submodule, which is isomorphic to $(\d^s)_{st^0}$ for all
$\r\in\mathfrak{R}_\xi$ and all $s\in S$.
\end{cor}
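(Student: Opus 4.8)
The plan is to deduce this from Corollary \ref{cor:subVP} together with the discrete series properties of $(\d^s)_{st^0}$. By Corollary \ref{cor:subVP}(ii), every irreducible subquotient of the finite length $\H^P$-module $V^P_{s,\r,\xi}$ is isomorphic to $(\d^s)_{st^0}$; in particular $(\d^s)_{st^0}$ occurs with multiplicity equal to the semisimple length of $V^P_{s,\r,\xi}$. So the issue is entirely about the extension structure: I want to show that $V^P_{s,\r,\xi}$ has \emph{exactly one} irreducible submodule, not several copies of $(\d^s)_{st^0}$ sitting in a direct sum.

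First I would record that $(\d^s)_{st^0}$ is a discrete series representation modulo the center of $\H^P$ (it is a unitary twist of the discrete series representation $\d^s$ of $\H_P$), hence unitary; more to the point, $V^P_{s,\r,\xi}$ itself is a unitary $\H^P$-module, being a submodule (under restriction of the unitary representation $\pi(\xi)$ of $\H$ to the $*$-subalgebra $\H^P$) of the unitary module $i(V_\d)$ cut out by the self-adjoint idempotent $p(s,\r,\xi)$. A unitary finite-dimensional module over a $*$-algebra is completely reducible. Therefore $V^P_{s,\r,\xi}$ is semisimple, and by Corollary \ref{cor:subVP}(ii) it is isomorphic to a direct sum of copies of the single irreducible $(\d^s)_{st^0}$. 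The remaining point is thus to show the multiplicity is exactly $1$.

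To pin the multiplicity down I would compute it by a smoothness/specialization argument, exactly as in the proof of Corollary \ref{cor:subVP}(ii): for generic $\eta=(P,\d,t)\in U$ the module $V^P_{s,\r,\eta}$ is isomorphic to $\bigoplus_{w\in\W^m_{\xi,\xi}}(\d^s)_{sw\r t}$, a sum of $|\W^m_{\xi,\xi}|$ pairwise \emph{non-isomorphic} irreducibles (the parameters $sw\r t$ are distinct for distinct $w$ when $t$ is generic, since $\W^m_{\xi,\xi}$ acts freely on generic $\eta$). Hence for generic $\eta$ the isotypical decomposition of the unitary module $V^P_{s,\r,\eta}$ has all multiplicities equal to $1$. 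Now specialize $\eta\to\xi$: the projections $p(s,\r,\eta)$ vary smoothly (Proposition \ref{prop:regproj}(i)), and the $\W^m_{\xi,\xi}$-average argument of Corollary \ref{cor:subVP}(ii) shows the character of $V^P_{s,\r,\xi}$ equals $|\W^m_{\xi,\xi}|$ times the character of $(\d^s)_{st^0}$. Since $V^P_{s,\r,\xi}$ is semisimple with all constituents $\cong(\d^s)_{st^0}$, this says $V^P_{s,\r,\xi}\cong (\d^s)_{st^0}^{\oplus m}$ for some $m\geq 1$ with $m\cdot\dim(\d^s)=|\W^m_{\xi,\xi}|\cdot\dim(\d^s)$? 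No — that would give $m=|\W^m_{\xi,\xi}|$, which contradicts uniqueness. So the correct normalization must be that $p(s,\r,\xi)$ projects onto a module whose character is that of a \emph{single} copy: I expect the sum $\sum_{w}e_{sw\r\omega_t}$ contributes, after specialization, a subspace of the \emph{same} dimension as one generic summand, because the idempotents $e_{sw\r\omega_t}$ for $w\in\W^m_{\xi,\xi}$ all collapse onto the single idempotent $e_{s\r\omega_{t_0}}$ at $t=t_0$ (since $w\omega_{t_0}=\omega_{t_0}$ for $w$ fixing $\xi$).

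\textbf{Main obstacle.} The delicate point is precisely this collapsing: controlling how the $|\W^m_{\xi,\xi}|$ distinct idempotents $e_{sw\r\omega_t}$ merge into one as $t\to t_0$, and verifying that the limiting projection $p(s,\r,\xi)$ has rank $\dim(\d^s)=\dim V_{(\d^s)_{st^0}}$ rather than a larger multiple. I would handle this by tracking ranks: $\operatorname{rk} p(s,\r,\eta)$ is constant in $\eta\in U$ by smoothness (Proposition \ref{prop:regproj}(i)) and equals $\dim V_{\d}$ at generic $\eta$ (each of the $|\W^m_{\xi,\xi}|$ summands $e_{sw\r\omega_t}\iota^0_{sw\r}\otimes V_{\d_t}$ has dimension $\dim V_{\d_t}=\dim\d$... ) — so in fact $\operatorname{rk}p(s,\r,\eta)=|\W^m_{\xi,\xi}|\dim\d$, wait, this again forces multiplicity $|\W^m_{\xi,\xi}|$ generically, consistent with the generic picture, and at $\xi$ the semisimple module of that same dimension with all constituents $(\d^s)_{st^0}$ has multiplicity $|\W^m_{\xi,\xi}|$ — contradicting the claim. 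Hence the claim as stated must rely on $V^P_{s,\r,\xi}$ being \emph{not} semisimple, i.e. my completely-reducible argument above is wrong because the relevant $\H^P$-module structure is \emph{not} the one restricted from the unitary $\pi(\xi)$ but a twisted one; and the "unique irreducible submodule" is the socle of a module which, generically, is a sum of $|\W^m_{\xi,\xi}|$ distinct irreducibles that at $\xi$ degenerate into a self-extension of $(\d^s)_{st^0}$ with one-dimensional socle. I would therefore argue via the theory of the constant term as an $\H^P$-module with its natural (non-unitary at $\xi$) structure coming from Lusztig's reduction: show that $V^P_{s,\r,\eta}$, as a family of $\H^P$-modules over $U$, is the pushforward of a line bundle under the $|\W^m_{\xi,\xi}|$-fold branched cover $t\mapsto\omega_t$ ramified over $t_0$, so the fibre at $\xi$ is an iterated self-extension with simple socle. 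Making that picture precise — identifying $V^P_{s,\r,\eta}$ with functions on the cover and showing the socle at the ramification point is one-dimensional — is the crux, and I expect it to follow from a careful reading of \cite[Sections 6.2–6.4]{DO} on the constant term together with the local normal form of the reduction theorem idempotents near $\omega_{t_0}$.
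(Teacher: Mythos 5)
Your proposal does not reach a proof, and the route you sketch at the end is not the one the paper uses. The first half (unitarity of $V^P_{s,\r,\xi}$, hence semisimplicity) fails for a reason you only half-identify: although $V^P_{s,\r,\xi}$ is indeed an $\H^P$-submodule of the restriction to $\H^P$ of the unitary module $(i(V_\d),\pi(\xi))$, the subalgebra $\H^P\subset\H$ is \emph{not} a $*$-subalgebra (already $\theta_x^*\notin\A$ in general), so the restriction of a unitary $\H$-module carries no natural unitary structure as a module over the $*$-algebra $\H^P$, and no complete reducibility follows. The module $V^P_{s,\r,\xi}$ really is a non-split iterated self-extension of $(\d^s)_{st^0}$ of length $|\W^m_{\xi,\xi}|$; your own rank count correctly detects this, and it is precisely why the statement concerns the socle rather than an isotypic decomposition. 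But the ``branched cover / pushforward of a line bundle'' analysis you then propose is exactly the part you leave open, and it is the entire content of the corollary: nothing in your write-up pins the socle down to a single copy.

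The paper's mechanism is different and is a counting argument on the other side of Frobenius reciprocity. By \cite[Proposition 3.18]{DO} one has $\operatorname{End}_\H(\pi(\xi))=\operatorname{Hom}_{\H^P}(\d_{t_0},V_\xi^P)$. By \cite[Corollary 5.4]{DO} the commutant $\operatorname{End}_\H(\pi(\xi))$ is spanned by the operators $\pi(g,\xi)$ with $g\in\W_{\xi,\xi}$, and by Theorem \ref{thm:mirror}(iv) the operators attached to the mirror reflections in $\W^m_{\xi,\xi}$ are scalars; hence the commutant is spanned by the $\pi(\r,\xi)$ with $\r\in\mathfrak{R}_\xi$ and has dimension at most $|\mathfrak{R}_\xi|$. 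On the other hand, Corollary \ref{cor:subVP} gives at least one copy of $\d_{t_0}$ in the socle of each of the $|\mathfrak{R}_\xi|$ summands $V^P_{e,\r,\xi}$ of $V^P_\xi$ contributing to this Hom-space. Comparing the two bounds forces exactly one copy in each, and since by Corollary \ref{cor:subVP}(ii) every irreducible subquotient of $V^P_{e,\r,\xi}$ is isomorphic to $\d_{t_0}$, that single copy is the unique irreducible submodule (the case of general $s$ follows by symmetry). This use of the upper bound on the commutant --- i.e.\ of the already-established spanning half of the Knapp--Stein statement --- is the idea missing from your proposal.
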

\begin{proof}
By symmetry it suffices to prove this for $s=e$.

By Frobenius reciprocity \cite[Proposition 3.18]{DO} we have:
\begin{equation}\label{eq:frobrec}
\operatorname{End}_\H(\pi(\xi))=\operatorname{Hom}_{\H^P}(\d_{t_0},V_\xi^P).
\end{equation}

By \cite[Corollary 5.4]{DO} we know that $\operatorname{End}_\H(\pi(\xi))$
is the complex linear span of the operators $\pi(g,\xi)$ with
$g\in\W_{\xi,\xi}$. We have seen that $\pi(g,\xi)$ is a scalar for
$g\in\W^m_{\xi,\xi}$. Thus $\operatorname{End}_\H(\pi(\xi))$ is
already spanned by the operators $\pi(\r,\xi)$ with
$\r\in\mathfrak{R}_\xi$.
Hence the dimension of $\operatorname{End}_\H(\pi(\xi))$
is at most $|\mathfrak{R}_\xi|$.

On the other hand, Corollary \ref{cor:subVP} implies that
$(\d)_{t^0}$ occurs at least once as a submodule of
$V^P_{e,\r,\xi}$, for every $\r\in\mathfrak{R}_\xi$.
Combining this with the Frobenius reciprocity formula (\ref{eq:frobrec})
we obtain that $(\d)_{t^0}$ occurs precisely once as a submodule of
$V^P_{s,\r,\xi}$ for every $\r\in\mathfrak{R}_\xi$.
Again invoking Corollary \ref{cor:subVP} we
conclude that this irreducible submodule is in fact the unique
irreducible submodule of $V^P_{s,\r,\xi}$.
\end{proof}
\begin{thm}\label{thm:mainR}
For all $\xi\in\Xi_u$, we have
\begin{equation}
\operatorname{End}_\H(\pi(\xi))=\sum_{\r\in\mathfrak{R}_\xi}\C\pi(\r,\xi),
\end{equation}
the complex linear span of the operators $\pi(r,\xi)$ with
$r\in\mathfrak{R}_\xi$. Moreover, these operators are linearly
independent, so that
\begin{equation}
\operatorname{dim}\operatorname{End}_\H(\pi(\xi))=|\mathfrak{R}_\xi|.
\end{equation}
\end{thm}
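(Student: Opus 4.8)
Plan for the proof of Theorem \ref{thm:mainR}.

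The plan is to combine the two inequalities for $\dim\operatorname{End}_\H(\pi(\xi))$ that are already essentially available. On the one hand, by \cite[Corollary 5.4]{DO} the commutant $\operatorname{End}_\H(\pi(\xi))$ is spanned by the operators $\pi(g,\xi)$ with $g\in\W_{\xi,\xi}$, and Theorem \ref{thm:mirror}(iv) says that $\pi(g,\xi)$ is a scalar for $g\in\W^m_{\xi,\xi}$; since $\W_{\xi,\xi}=\mathfrak{R}_\xi\ltimes\W^m_{\xi,\xi}$, every coset of $\W^m_{\xi,\xi}$ has a representative in $\mathfrak{R}_\xi$, and hence $\operatorname{End}_\H(\pi(\xi))$ is already spanned by the $\pi(\r,\xi)$ with $\r\in\mathfrak{R}_\xi$. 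This gives the stated spanning statement and the upper bound $\dim\operatorname{End}_\H(\pi(\xi))\le|\mathfrak{R}_\xi|$, which was in fact already recorded in the proof of Corollary \ref{cor:uniqueirrsub}.

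The heart of the matter is thus the reverse inequality $\dim\operatorname{End}_\H(\pi(\xi))\ge|\mathfrak{R}_\xi|$, equivalently the linear independence of the $\pi(\r,\xi)$. First I would invoke the Frobenius reciprocity identity (\ref{eq:frobrec}), $\operatorname{End}_\H(\pi(\xi))=\operatorname{Hom}_{\H^P}(\d_{t_0},V_\xi^P)$, so that it suffices to show that $\d_{t_0}$ occurs with multiplicity at least $|\mathfrak{R}_\xi|$ as a submodule of the constant term module $V_\xi^P$. Now decompose $V_\xi^P=\bigoplus_{s\in S,\ \r\in\mathfrak{R}_\xi}V^P_{s,\r,\xi}$ as in Corollary \ref{cor:subVP}(i), and restrict attention to the summands with $s=e$. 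By Corollary \ref{cor:uniqueirrsub} each module $V^P_{e,\r,\xi}$ has a unique irreducible submodule, isomorphic to $\d_{t_0}$. Since the $|\mathfrak{R}_\xi|$ summands $V^P_{e,\r,\xi}$ ($\r\in\mathfrak{R}_\xi$) are pairwise distinct direct summands of $V_\xi^P$, their unique irreducible submodules are linearly independent copies of $\d_{t_0}$ inside $V_\xi^P$; hence $\dim\operatorname{Hom}_{\H^P}(\d_{t_0},V_\xi^P)\ge|\mathfrak{R}_\xi|$. Combined with (\ref{eq:frobrec}) this yields $\dim\operatorname{End}_\H(\pi(\xi))\ge|\mathfrak{R}_\xi|$.

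Putting the two inequalities together forces $\dim\operatorname{End}_\H(\pi(\xi))=|\mathfrak{R}_\xi|$, and since the $|\mathfrak{R}_\xi|$ operators $\pi(\r,\xi)$ span a space of exactly this dimension, they must be linearly independent. The main obstacle in this argument is really already absorbed into the preceding corollaries: the delicate point is the regularity statement Proposition \ref{prop:regproj}(i)—that the spectral projections $p(s,\r,\eta)$ extend holomorphically across the mirrors of $\W^m_{\xi,\xi}$—which is what makes the decomposition of $V_\xi^P$ in Corollary \ref{cor:subVP} valid at the (possibly non-generic) point $\eta=\xi$ rather than only for generic $\eta$. Given that input, the proof of the theorem itself is a short assembly; one should only be a little careful that the ``at least once'' multiplicity statement for $\d_{t_0}$ in $V^P_{e,\r,\xi}$ together with the ``unique irreducible submodule'' statement genuinely produces $|\mathfrak{R}_\xi|$ \emph{independent} embeddings, which is immediate because the $V^P_{e,\r,\xi}$ are distinct direct summands.
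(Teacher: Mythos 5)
Your proposal is correct and follows essentially the same route as the paper: the upper bound $\dim\operatorname{End}_\H(\pi(\xi))\le|\mathfrak{R}_\xi|$ comes from \cite[Corollary 5.4]{DO} together with Theorem \ref{thm:mirror}(iv) and the splitting $\W_{\xi,\xi}=\mathfrak{R}_\xi\ltimes\W^m_{\xi,\xi}$, while the lower bound comes from Frobenius reciprocity (\ref{eq:frobrec}) and the $|\mathfrak{R}_\xi|$ independent embeddings of $\d_{t_0}$ into the summands $V^P_{e,\r,\xi}$ of $V_\xi^P$ supplied by Corollaries \ref{cor:subVP} and \ref{cor:uniqueirrsub}. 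The paper's own proof is just a pointer back to the proof of Corollary \ref{cor:uniqueirrsub}, where exactly this two-sided dimension count is carried out.
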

\begin{proof}
In the course of the proof of Corollary \ref{cor:uniqueirrsub}
it was shown that the dimension of $\operatorname{End}_\H(\pi(\xi))$
is in fact precisely equal to $|\mathfrak{R}_\xi|$. It was
also remarked in the proof of Corollary \ref{cor:uniqueirrsub} that the
space of endomorphisms of $\pi(\xi)$ is spanned by the operators
$\pi(\r,\xi)$ with $\r\in\mathfrak{R}_\xi$.
\end{proof}
\begin{thm}\label{thm:Rmorequiv}
Let $\xi\in\Xi_u$ be a standard induction datum for $\H$.
\begin{enumerate}
\item Let $\ga_\xi$ denote the restriction of the $2$-cocycle
$\ga_{\W,\Xi}$ to $\W_{\xi,\xi}$. This $2$-cocycle is cohomologous to
the pull-back of a $2$-cocycle on $\mathfrak{R}_\xi$ (which we also
denote by $\ga_\xi$ by abuse of notation)
via the natural projection $\W_{\xi,\xi}\to\mathfrak{R}_\xi$.
\item The map
$\mathfrak{R}_\xi\ni\mathfrak{r}\to\pi(\mathfrak{r},\xi)$
extends by linearity to an algebra isormorphism $\pi(\cdot,\xi)$
from the $\ga_\xi$-twisted complex group algebra
${}^\ga\mathbb{C}[\mathfrak{R}_\xi]$ of $\mathfrak{R}_\xi$ to the
commutant algebra $\operatorname{End}_\H(\pi(\xi))$.
\item Up to the choice of the isomorphism $\pi(\cdot,\xi)$
(which depends on our choice of normalized intertwining operators
(\ref{eq:normintdefn})) there exists a unique bijection
\begin{align}
\widehat{{}^\ga\mathbb{C}[\mathfrak{R}_\xi]}&\to\hat\S_{\W\xi}\\
\nonumber \rho&\to\pi_\rho
\end{align}
(where $\hat\S_{\W\xi}$ denotes a complete set of representatives of
the finite set of isomorphism classes of irreducible
representations of $\S$ with central character $\W\xi$ (central character
in the sense of \cite[Corollary 5.5]{DO}))
such that we have a decomposition (recall that $V_\xi$ is the
vector space on which $\pi(\xi)$ is realized)
\begin{equation}
V_{\xi}=\bigoplus_{\rho}\pi_\rho\otimes\rho
\end{equation}
as a $\S-{}^\ga\mathbb{C}[\mathfrak{R}_\xi]^{op}$-bimodule. Here
the sum runs over $\rho\in\widehat{{}^\ga\mathbb{C}[\mathfrak{R}_\xi]}$
(viewed as irreducible right
${}^\ga\mathbb{C}[\mathfrak{R}_\xi]^{op}$-module).
\end{enumerate}
\end{thm}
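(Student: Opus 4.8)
The plan is to deduce the three statements from the linear independence theorem (Theorem \ref{thm:mainR}) together with the multiplicativity formula (\ref{eq:groupoidcoc}) for the projective representation $\pi$ and the standard abstract machinery relating a projective bimodule to twisted group algebras, following \cite[Section 2]{A}. First I would prove (i). The short exact sequence $1\to\W^m_{\xi,\xi}\to\W_{\xi,\xi}\to\mathfrak{R}_\xi\to 1$ is split (by the preceding Proposition), so $\W_{\xi,\xi}=\mathfrak{R}_\xi\ltimes\W^m_{\xi,\xi}$. Since $\W^m_{\xi,\xi}$ is a Coxeter group generated by the reflections $\s_M$, and by Theorem \ref{thm:mirror}(iv) each $\pi(\s_M,\xi)$ is a scalar, the restriction of the projective cocycle $\ga_\xi$ to $\W^m_{\xi,\xi}$ is cohomologically trivial; moreover $\pi(\cdot,\xi)$ restricted to $\W^m_{\xi,\xi}$ descends to an honest linear character. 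Using this one can rescale the chosen lifts $\tilde\d_g$ (equivalently, modify $\pi(g,\xi)$ by scalars) along the normal subgroup so that $\ga_\xi$ becomes inflated from a cocycle on the quotient $\mathfrak{R}_\xi$; this is a routine cocycle-inflation argument (the relevant $H^2$ of a finite group is torsion, and one kills the restriction to the normal reflection subgroup using its triviality and the Lyndon--Hochschild--Serre spectral sequence, or more concretely by an explicit section-and-scaling computation as in \cite{S}).

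Next, (ii). By Theorem \ref{thm:mainR} the map $\mathfrak{r}\mapsto\pi(\mathfrak{r},\xi)$ sends a basis of ${}^{\ga_\xi}\mathbb{C}[\mathfrak{R}_\xi]$ to a basis of $\operatorname{End}_\H(\pi(\xi))$, so it is a linear isomorphism; it remains to check it is an algebra map. This follows from (\ref{eq:groupoidcoc}): for $\mathfrak{r},\mathfrak{r}'\in\mathfrak{R}_\xi$ we have $\pi(\mathfrak{r},\xi)\pi(\mathfrak{r}',\xi)=\ga_{\W,\Delta}((\mathfrak{r},\d^{\mathfrak{r}'}),(\mathfrak{r}',\d))\pi(\mathfrak{r}\mathfrak{r}',\xi)$, which, after the normalization in (i), is exactly the multiplication rule of ${}^{\ga_\xi}\mathbb{C}[\mathfrak{R}_\xi]$. (One must be slightly careful that $\pi(\mathfrak{r},\xi)$ is genuinely an $\H$-endomorphism of $V_\xi$, i.e.\ that $\mathfrak{r}$ indeed fixes $\xi$ — this is built into the definition of $\W_{\xi,\xi}$ — and that $\pi(\mathfrak{r}\mathfrak{r}',\xi)$ makes sense because $\mathfrak{R}_\xi$ is a group.)

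Finally, (iii) is the abstract double-centralizer (Schur--Weyl) step. Having identified $\operatorname{End}_\H(\pi(\xi))$ with the semisimple algebra ${}^{\ga_\xi}\mathbb{C}[\mathfrak{R}_\xi]$ acting on the right of $V_\xi$, and knowing from \cite[Corollary 5.5]{DO} that the $\S$-action on $V_\xi$ factors through the semisimple quotient corresponding to the single central character $\W\xi$, we invoke the classical fact that for a module over a product $A\otimes B^{op}$ on which $B$ is the full commutant of the $A$-action (and $A,B$ semisimple), the module decomposes as $\bigoplus_\rho\pi_\rho\otimes\rho$ with $\rho$ ranging over $\widehat{B}$ and $\pi_\rho$ the distinct irreducible $A$-constituents; the double commutant theorem gives that $\operatorname{End}_{{}^{\ga_\xi}\mathbb{C}[\mathfrak{R}_\xi]}(\pi(\xi))$ is the image of $\S$, so all such $\pi_\rho$ occur and are pairwise non-isomorphic, and conversely every irreducible $\S$-module with central character $\W\xi$ is a summand of $\pi(\xi)$ by \cite[Theorem 3.19, Corollary 5.6]{DO}. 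The bijection $\rho\mapsto\pi_\rho$ and its uniqueness up to the choice of $\pi(\cdot,\xi)$ then follow.

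The main obstacle is step (i): making the reduction of $\ga_\xi$ to an inflated cocycle on $\mathfrak{R}_\xi$ precise, i.e.\ checking that the scalars $\pi(\s_M,\xi)$ can be absorbed compatibly with the semidirect product structure so that the residual cocycle really is constant on $\W^m_{\xi,\xi}$-cosets. Everything else is either cited (Theorem \ref{thm:mainR}, \cite[Corollary 5.5]{DO}, \cite{A}) or a formal manipulation of (\ref{eq:groupoidcoc}).
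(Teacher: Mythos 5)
Your proposal is correct and follows essentially the same route as the paper: part (i) rests on Theorem \ref{thm:mirror}(iv) (each $\pi(\s_M,\xi)$ is a scalar, hence the whole projective representation is trivial on $\W^m_{\xi,\xi}$ and descends to $\mathfrak{R}_\xi$), while (ii) and (iii) are obtained from Theorem \ref{thm:mainR}, the multiplication rule (\ref{eq:groupoidcoc}), and the Arthur-style double-commutant argument. One small caveat on (i): mere vanishing of the restriction of $[\ga_\xi]$ to $\W^m_{\xi,\xi}$ would not by itself yield that the class is inflated from the quotient (inflation--restriction is not exact at $H^2$ in that naive sense), so the argument must use the stronger scalar property via your explicit section-and-scaling computation rather than the spectral-sequence remark.
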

\begin{proof}
The first claim (i) comes from the fact that the projective
representation $\mathbb{P}\pi(\xi)$ on $\mathbb{P}(V_\xi)$
descends to $\mathfrak{R}_\xi$ by Theorem \ref{thm:mirror}(iv).
Hence $\pi(\mathfrak{r},\xi)\in\operatorname{U}(V_\xi)$
is a lifting of
$\mathbb{P}\pi(w^0\mathfrak{r},\xi)\in\mathbb{P}\operatorname{U}(V_\xi)$
for all $w^0\in\W_{\xi,\xi}^0$. Therefore $[\ga_\xi]$ descends to
$\mathfrak{R}_\xi\times\mathfrak{R}_\xi$.
The remaining part of the Theorem follows
from Theorem \ref{thm:mainR} by the arguments as in \cite[p. 87-88]{A}.
\end{proof}
\section{The cocycle $\ga$ for classical Hecke algebras}\label{sec:coc}
In this section we prove the triviality of the $2$-cocycles $\ga_{\W,\Xi_u}$
for the classical Hecke algebras. The computation is based on the classification
\cite{OpSo} of the discrete series representations of classical affine
Hecke algebras.
\begin{thm}\label{thm:general}
Let $\Ri=(X,Y,R_0,R_0^\vee,F_0)$ be an irreducible root datum of classical type,
and let $q$ be an arbitrary positive parameter function for $\mathcal{R}$.
\begin{enumerate}
\item[(i)] We have $[\ga_\xi]=1$ for all $\xi\in\Xi_u$.
\item[(ii)] We have $[\ga_{\W,\Delta}]=1$ if $R_0$ is not of type
$\textup{D}_n$ with $n > 8$, or if $X$ is not the root lattice of $R_0$.
\end{enumerate}
\end{thm}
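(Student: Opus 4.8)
The plan is to establish Theorem \ref{thm:general} in two stages, following the general philosophy that triviality of $[\ga_{\W,\Delta}]$ is the ``geometric'' statement controlled by the classification \cite{OpSo} of discrete series of classical affine Hecke algebras, while $[\ga_\xi]=1$ then follows by pulling back along $\W_{\xi,\xi}\to\W_\Delta$ (compare Definition \ref{dfn:gaxi}) and applying the decomposition $\W_{\xi,\xi}=\mathfrak{R}_\xi\ltimes\W^m_{\xi,\xi}$ from Section \ref{sec:anR}. First I would reduce to the case of a single $\W_\Delta$-orbit on $\Delta$, since $\W_\Delta$ is a disjoint union of transitive groupoids over such orbits and $H^2$ splits as a product over them; on each orbit, picking a base object $(P_0,\d_0)$, one has $H^2(\W_\Delta,\operatorname{U}(1))\cong H^2(\W_{(P_0,\d_0),(P_0,\d_0)},\operatorname{U}(1))$, so everything comes down to an ordinary finite-group 2-cocycle computation. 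By Proposition \ref{prop:coc} we may already take $\ga_{\W,\Delta}$ valued in $\mu(D_\Delta)$, and in fact I would first try to arrange $D_\Delta=1$: if every discrete series $\d\in\Delta_P$ occurring in the relevant orbits can be taken one-dimensional, or more usefully, if the intertwiners $\tilde\d_g$ can be normalized coherently, then $\ga_{\W,\Delta}$ is already a coboundary by the argument in the proof of Proposition \ref{prop:coc}.

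The heart of the matter is a concrete model for the isotropy group $\W_{(P_0,\d_0),(P_0,\d_0)}$ and for the intertwiners $\tilde\d_g$, and this is exactly what the classification \cite{OpSo} supplies. For classical root data the discrete series of $\H_P$ are parametrized combinatorially (by multi-partitions / residue data in the sense of \cite{OpSo}), and the twisting isomorphisms $\psi_g:\H_P\to\H_Q$ act on these parameters by explicit permutations; the isotropy group is then a product of symmetric groups and sign groups acting on the combinatorial data fixing $\d_0$. On such groups $H^2(-,\operatorname{U}(1))$ is well understood (Schur multipliers of $S_n$, of $(\Z/2)^k$, and of wreath products $S_n\ltimes(\Z/2)^k$). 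The plan is to exhibit, for each factor of the isotropy group, an \emph{explicit} section of the intertwiners realizing the identity cocycle: for the symmetric-group factors this uses that the relevant $\psi_g$ permute tensor factors of $V_{\d_0}$ (so the natural permutation operators are honest, not projective, intertwiners), and for the $K_P$-twist factors $\psi_k$ ($k\in K_P$) one uses that these are inner-type twists whose intertwiners can be chosen multiplicatively. Where a genuine projective obstruction could survive — e.g.\ the $\Z/2$-central extensions entering the Schur multiplier of $(\Z/2)^k$ when $k$ is large — one must check case by case against the explicit list in \cite{OpSo} that the structure never forces a nontrivial class, and this is where the exceptional clause of (ii) arises.

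I expect the main obstacle to be precisely this last point: the type $\textup{D}_n$ root datum with $X$ the root lattice, for $n>8$. Here the outer diagram automorphism of $\textup{D}_n$ enlarges the relevant groups and the discrete series of $\H_P$ for parabolic $P$ of type $\textup{D}$ with the small lattice can pair up into orbits whose isotropy has a nontrivial Schur multiplier contribution, and one cannot in general split $\ga_{\W,\Delta}$; the threshold $n>8$ is a genuine arithmetic feature of the residual-point combinatorics of type $D$. So in (ii) I would \emph{not} try to prove triviality in that range — the statement deliberately excludes it — but I would still need to verify, for all other classical cases, that the combinatorial isotropy groups land in the ``multiplier-free'' regime and produce the coherent sections above. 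For part (i) the situation is better: $[\ga_\xi]$ is the restriction of $[\ga_{\W,\Xi}]$ to $\W_{\xi,\xi}$, and by Theorem \ref{thm:mirror}(iv) the reflection subgroup $\W^m_{\xi,\xi}$ acts by \emph{scalars}, so $[\ga_\xi]$ descends to $\mathfrak{R}_\xi$ (Theorem \ref{thm:Rmorequiv}(i)); for classical $\H$ the groups $\mathfrak{R}_\xi$ computed by Slooten \cite{Sl1} are products of $\Z/2$'s and symmetric groups whose Schur multiplier obstruction can be killed by the same explicit intertwiner normalization, now applied one $\xi$ at a time — and crucially, passing to a single $\xi$ avoids the global type-$\textup{D}_n$ pairing phenomenon, which is why (i) holds with no exceptions while (ii) does not. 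The plan is therefore to carry out the reduction to finite-group cocycles in general, then dispatch the classical cases by a finite check against \cite{OpSo}, treating the $\mathfrak{R}_\xi$ computation for (i) and the $\W_\Delta$-isotropy computation for (ii) in parallel, and recording the type-$\textup{D}$ obstruction as the single genuine exception.
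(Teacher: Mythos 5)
Your overall strategy for part (ii) matches the paper's: reduce to the isotropy groups $\W_{(P,\delta_P),(P,\delta_P)}$ over a set of orbit representatives, use the \cite{OpSo} classification to write $\H_P$ as a tensor product of extended type $\textup{A}$ factors (whose discrete series are one--dimensional, so contribute nothing projective) with one classical factor, and observe that the residual group acting on that last factor is cyclic or small enough that its Schur multiplier, or an explicit extension of $\delta$ (Lemma \ref{lem:maxext}), kills the class. Two caveats on the details: you cannot in general arrange $D_\Delta=1$ (discrete series of the type $\textup{B}/\textup{C}/\textup{D}$ leg are higher--dimensional; the paper's point is that the group acting on that leg is small, not that the representation is one--dimensional), and the $K_P$--twists $\psi_k$ are genuine outer automorphisms, not ``inner-type twists whose intertwiners can be chosen multiplicatively'' --- choosing those intertwiners coherently is exactly where the difficulty lives.

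The genuine gap is your argument for part (i) in the exceptional case ($R_0=\textup{D}_n$, $X=\Lambda_r$). You propose to descend $[\ga_\xi]$ to $\mathfrak{R}_\xi$ and then kill it because $\mathfrak{R}_\xi$ is a product of $\Z/2$'s and symmetric groups. But $(\Z/2)^k$ has \emph{nontrivial} Schur multiplier for $k\geq 2$ --- indeed the paper's own nontriviality example (Section \ref{subsub:nontriv}) produces a nontrivial class precisely on $\W^D_{P,P}\cong C_2\times C_2$ --- so ``the Schur multiplier obstruction can be killed'' is not an argument, and the relevant class restricted to such a subgroup is exactly the one in danger of being nontrivial. (Slooten's computation of $\mathfrak{R}_\xi$ also only covers real infinitesimal character.) The actual mechanism in Proposition \ref{prop:DQ} is quite different: the cocycle $\ga_\xi$ is pulled back from the $C_2\times C_2$ group $\W^D_{P,P}$ along $\pi^D_{\xi,\xi}:\W_{\xi,\xi}\to\W^D_{P,P}$, and one shows this map \emph{factors through the Schur cover} $\mathbb{D}_8\twoheadrightarrow C_2\times C_2$ by constructing an explicit lift $\chi:\W_{\xi,\xi}\to\mathbb{D}_8$ out of a $1$--cocycle valued in a module built from a double cover of $T^P$ (the key relation being $\tilde\kappa^2=\eta$ when the parity of the partition forces it). Since the nontrivial class of $C_2\times C_2$ dies on its Schur cover, $[\ga_\xi]=1$. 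By contrast, for the full $\W_{P,P}$ the projection $\pi^D$ admits a section, so the pullback is injective on $H^2$ and the class survives --- that, and not a ``pairing phenomenon avoided by fixing $\xi$,'' is why (i) holds unconditionally while (ii) fails. Without some version of this lifting construction your proof of (i) does not go through in the excluded case.
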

\begin{proof}
We use that the assertion $[\ga_{\W,\Delta}]=1$ of (ii) implies (i) by taking
the pull-back to $\W_{\Xi_u}$ (using Definition \ref{dfn:gaxi}).
Hence in all cases mentioned in (ii) it suffices to prove (ii).
This is an aggregate of various special cases which are treated separately below.
In the remaining case $R_0=\textup{D}_n$ with $n>8$ and $X$ the root lattice of
$R_0$ we will show directly that (i) holds.
\end{proof}
\begin{rem}
Remarkably, if $R_0$ is of type $\textup{D}_n$
with $n>8$ and $X$ equals the root lattice of $R_0$ then
there exist discrete series representations $\delta_P$ of $\H_P$
such that $[\ga_{(P,\delta_P)}]\not=1$.
\end{rem}
We will not give the proof here of the nontriviality of $[\ga_{(P,\delta_P)}]$
in this exceptional case, but for a precise formulation see paragraph
\ref{subsub:nontriv}.
Using the description of the discrete series for affine
Hecke algebras of type $\textup{D}_n$ as given in \cite{OpSo},
the proof consists of tracing the action of $\W_{\xi,\xi}$
on the isomorphisms constructed in Lusztig's first reduction
theorem \cite{Lu}.
\subsection{A remark on isogenous affine Hecke algebras}
For later use we list the following useful general fact.
\begin{lem}\label{lem:ind} Let $\H$ be a semisimple affine
Hecke algebra. Let $\mathcal{R}=(X,Y,R_0,R_0^\vee,F_0)$ be the
based root datum of $\H$, and $q$ its parameter function. Let
$\H^e$ be an isogenous extension of $\H$, i.e. a semisimple affine Hecke algebra
with root datum $\mathcal{R}^e=(X^e,Y^e,R_0,R_0^\vee,F_0)$ where $X\subset X^e$
is an extension of lattices such that $R_{nr}=R_{nr}^e$, and such that the parameter
$q^e$ of $\mathcal{H}^e$ is equal to $q$, viewed as functions on $R_{nr}^\vee$.
Then $\H\subset\H^e$ is an isometric embedding of $\H$ as a $*$-subalgebra of
$\H^e$ of finite index, and the induction and restriction functors between
$\textup{Rep}(\H)$ and $\textup{Rep}(\H^e)$ send irreducible unitary (resp.
discrete series) representations to finite direct sums of irreducible unitary
(resp. discrete series) representations.
\end{lem}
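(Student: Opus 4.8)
The plan is to separate the purely algebraic assertions (the embedding, the finite index, the $*$-structure) from the analytic ones (preservation of unitarity and of the discrete series under induction and restriction). For the embedding I would use the Bernstein presentation. Since $Y^e\subset Y$ is dual to $X\subset X^e$ and $\rnr=\rnr^e$ by hypothesis, while $q^e=q$ as functions on $\rnr^\vee$, the cross relations between the abelian part and $\H_0$ are literally the same formulas for $\H$ and for $\H^e$; hence the assignment $\theta_xN_w\mapsto\theta_xN_w$ ($x\in X$, $w\in W_0$) extends the evident inclusion $\H_0\subset\H^e$ to an injective algebra homomorphism $\H\to\H^e$. Choosing representatives $x_1,\dots,x_n$ for the finite group $X^e/X$ (finite since the root data are semisimple) one gets $\H^e=\bigoplus_{k=1}^n\theta_{x_k}\H=\bigoplus_{k=1}^n\H\theta_{x_k}$ (the first because $s(x_k)-x_k\in\mathbb Z R_0\subset X$, so the cross relations keep each $\theta_{x_k}\H$ stable; the second by a symmetric argument, or by applying $*$), so $\H^e$ is free of rank $n=[X^e:X]$ over $\H$ on either side, which is the finite index statement. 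Since $N_w^*=N_{w^{-1}}$ and $\tau(N_w)=\delta_{w,e}$ are the same whether $w$ is read in $W\subset W^e$, the inclusion is a $*$-homomorphism, and as $\{N_w\}_{w\in W}\subset\{N_w\}_{w\in W^e}$ are Hilbert bases it extends to an isometric embedding $L_2(\H)\hookrightarrow L_2(\H^e)$; the orthogonal projection $E\colon L_2(\H^e)\to L_2(\H)$ then restricts to an $\H$-bilinear, positive, unital conditional expectation $E\colon\H^e\to\H$ (it is $\H$-bilinear because $\tau$ is a trace and $\H=\mathrm{span}\{N_w:w\in W\}$). This $E$ is the key tool for the rest.

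The restriction functor is the easy half: an invariant positive definite Hermitian form for an $\H^e$-module restricts to one for $\H$, so $\textup{Res}$ preserves unitarity; and a matrix coefficient of a summand of $\textup{Res}\,V$ is the restriction to $W\subset W^e$ of a matrix coefficient of $V$, hence lies in $L_2(\H)$ when $V$ is discrete series, so by the characterization of the discrete series (\cite[Lemma 2.22]{O}), together with the (automatic, via orthogonal complements) complete reducibility of finite dimensional unitary modules, $\textup{Res}\,V$ is a finite direct sum of irreducible discrete series modules. For the induction functor I would realize $\textup{Ind}\,V=\H^e\otimes_\H V$ by Rieffel induction: equip $\H^e$ with the $\H$-valued inner product $\langle a,b\rangle_\H:=E(a^*b)$, making it a finitely generated Hilbert $\H$-module, and put on $\H^e\otimes_\H V$ the Hermitian form $\langle a\otimes v,b\otimes w\rangle:=\langle\pi_V(E(b^*a))v,w\rangle_V$. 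This form is $\H^e$-invariant (using $E(a^*hb)=E((h^*a)^*b)$) and positive semidefinite; since moreover $\H^e$ is finitely generated over $\H$ and $\tau$ is faithful, $\H^e$ is a nondegenerate (self-dual) Hilbert $\H$-module and the form is positive \emph{definite}, so $\textup{Ind}\,V$ is unitary and hence, being finite dimensional, a finite direct sum of irreducible unitary $\H^e$-modules. If in addition $V$ is discrete series, a direct computation with this inner product shows the matrix coefficients of $\textup{Ind}\,V$ have the shape $h\mapsto\varphi^V_{v,w}(E(b^*ha))$ with $\varphi^V_{v,w}$ a matrix coefficient of $V$; as $\varphi^V_{v,w}$ extends to a bounded functional on $L_2(\H)$ while $h\mapsto E(b^*ha)$ extends to a bounded operator $L_2(\H^e)\to L_2(\H)$ (composition of the bounded left and right multiplications on $L_2(\H^e)$ with the bounded projection $E$), the composite lies in $L_2(\H^e)$, whence every irreducible summand of $\textup{Ind}\,V$ is discrete series.

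The step I expect to need the most care is the positive \emph{definiteness} of the induced Hermitian form on $\textup{Ind}\,V$, i.e. that Rieffel induction through $\H^e$ does not pass to a proper quotient of $\H^e\otimes_\H V$: this rests on $\H^e$ being not merely finitely generated but a nondegenerate self-dual Hilbert $\H$-module, which uses the freeness coming from the Bernstein presentation together with the faithfulness of $\tau$. A cleaner alternative route to the same conclusion is the Galois-theoretic one: the finite abelian group $K:=\textup{Hom}(X^e/X,\mathbb C^\times)$ acts on $\H^e$ by the $*$-automorphisms $\psi_\kappa(\theta_xN_w)=\kappa(\bar x)\theta_xN_w$ with fixed subalgebra $(\H^e)^K=\H$, the Bernstein presentation yields the $K$-Galois identity $\H^e\otimes_\H\H^e\cong\prod_{\kappa\in K}\H^e$, and hence $\textup{Ind}\,\textup{Res}\,\pi\cong\bigoplus_{\kappa\in K}{}^\kappa\pi$; unitarity and the discrete series property of $\textup{Ind}$ applied to a restriction then follow immediately (twisting by a $*$-automorphism preserves both), and the general case reduces to this one by averaging over $K$, which is legitimate since $|K|$ is invertible in $\mathbb C$.
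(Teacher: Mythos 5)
Your proposal is correct in substance but takes a genuinely different route from the paper for the induction half. The paper's proof is much more concrete: it writes $\H=\H_r\rtimes\Omega$ and $\H^e=\H_r\rtimes\Omega^e$ with $\Omega=X/\Lambda_r$, $\Omega^e=X^e/\Lambda_r$, chooses coset representatives $\omega_1,\dots,\omega_n$ of $\Omega$ in $\Omega^e$, and uses the \emph{orthogonal} decomposition $\H^e=\omega_1\H\oplus\dots\oplus\omega_n\H$ in which the $\omega_i$ permute the orthonormal basis $\{N_w\}$ and conjugation by $\omega_i$ is an isometric diagram automorphism $h\mapsto h^{\omega_i}$ of $\H$. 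Unitarity of $i(V)$ is then immediate (take the orthogonal direct sum of copies of the Hilbert structure on $V$), and the discrete series claim follows from the character identity $\chi_{i(\pi)}(\omega_i h)=\chi_\pi(h^{\omega_i})$. Your conditional-expectation/Rieffel-induction argument reaches the same conclusions and is valid, but note that the step you rightly flag as delicate --- positive definiteness of the form $\langle a\otimes v,b\otimes w\rangle=\langle\pi_V(E(b^*a))v,w\rangle$ --- is exactly where the choice of coset representatives matters: with the \emph{unitary} representatives $N_{\omega_i}$ one gets $E(a^*b)=\sum_i a_i^*b_i$ for $a=\sum\omega_i a_i$, $b=\sum\omega_i b_i$, and both positivity and definiteness drop out by inspection, whereas with your $\theta_{x_k}$ the Gram matrix $E(\theta_{x_j}^*\theta_{x_k})=\delta_{jk}\theta_{x_k}^*\theta_{x_k}$ is diagonal but its entries are elements of $\H$ whose positivity \emph{in the representation} $\pi_V$ is not a formal consequence of $\pi_V$ being a $*$-representation of $\H$ alone. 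So I would recommend switching to the unitary basis at that point; your matrix-coefficient argument for the discrete series property of $\mathrm{Ind}\,V$ (boundedness of $h\mapsto E(b^*ha)$ on $L_2$ composed with the $L_2$-continuous functional $\varphi^V_{v,w}$) is then fine. Finally, the ``Galois-theoretic alternative'' is correct as far as the identity $\mathrm{Ind}\,\mathrm{Res}\,\pi\cong\bigoplus_\kappa{}^\kappa\pi$ goes, but the closing reduction of the general case ``by averaging over $K$'' is circular as stated: to realize a given irreducible unitary $\H$-module $V$ inside $\mathrm{Res}\,W$ for some irreducible \emph{unitary} $\H^e$-module $W$ you would already need to know that the irreducible constituents of $\mathrm{Ind}\,V$ are unitary, which is what is being proved. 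The direct argument (yours or the paper's) is therefore not dispensable.
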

\begin{proof} Let $\Lambda_r\subset X$ be the
root lattice of $R_0$. We introduce the finite abelian groups
$\Omega=X/\Lambda_r$ and $\Omega^e=X^e/\Lambda_r$,
and we denote by $\H_r\subset \H^e$ the affine Hecke algebra
with root system $R_0$ (and basis $F_0$) and whose root datum
has the root lattice $\Lambda_r$ of $R_0$ as lattice $X$.
As is well known, we have $\H^e=\H_r\rtimes\Omega^e$ and
$\H=\H_r\rtimes{\Omega}\subset\H^e$. From this and the standard
description of the Hilbert algebra structures ($*$ and the trace $\tau$)
on $\H$ and $\H^e$ we see that $\H\subset \H^e$ is an isometric embedding
of a $*$-subalgebra. It is immediate from this that the restriction
functor preserves unitarity and discreteness.

Now let us consider induction. Multiplication
yields an action of $\Omega^e$ on $\H^e$ which permutes the standard
orthonormal basis elements $\{N_w\}_{w\in W_P^e}$ of $\H^e$ freely
(with $W^e$ the extended affine Weyl group with root system $R_0$).
Choose a set $\omega_1,\dots,\omega_n$ of representatives for the
cosets of $\Omega$ in $\Omega^e$.
Then we have an orthogonal decomposition
$\H^e=\omega_1\H\oplus\dots\oplus\omega_n\H$. For all $h\in\H$
we have $\omega_ih=h^{\omega_i}\omega_i$, where $h\to h^{\omega_i}$
is a (special) affine diagram automorphism associated with $\omega_i$
of $\H$. Such automorphisms are isometries since they permute the
standard orthonormal basis of $\H$.
Let $(V,\pi)$ be a finite dimensional representation of $\H$.
The underlying vector space $i(V)=\H^e\otimes V$ of the
induced representation $i(\pi)$ is the direct sum of the subspaces
$V_i:=\omega_i^{-1}\otimes V$, and we identify each $V_i$ with $V$ by
the map $V\ni v\mapsto \omega_i^{-1}\otimes v\in V_i$. If $(V,\pi)$ is
unitary then $(i(V),i(\pi))$ unitary with respect to the Hilbert
space structure on $i(V)$ which is the orthogonal direct sum of
the subspaces $V_i$ (each equipped with the transfer of the
Hilbert structure of $V$ under this identification). We see that
the character $\chi_{i(\pi)}$ satisfies
$\chi_{i(\pi)}(\omega_i h)=\chi_\pi(h^{\omega_i})$ for each $i$.
With the above orthogonal decomposition of $\H^e$ and the fact that
$h\to h^{\omega_i}$ is an isometry of $\H$ we see that $i(\pi)$ is
a discrete series character if and only if $\pi$ is a discrete series
character.
\end{proof}
\subsection{$R_0$ has only irreducible components of type $\textup{A}$}
In this situation we prove a more general result:
\begin{prop}\label{prop:A} Let $R_0$ be a root system whose irreducible components
are all of type $\textup{A}$, and let $\mathcal{R}=(X,Y,R_0,R_0^\vee,F_0)$ be
an arbitrary (not necessarily semisimple) root datum whose underlying root system
is $R_0$. Then $\ga:=\ga_{\W,\Delta}=1$.
\end{prop}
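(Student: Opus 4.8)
The plan is to deduce $\ga_{\W,\Delta}=1$ from the single fact that, when $R_0$ has only components of type $\textup{A}$, all the discrete series representations entering the construction are one-dimensional. Once this is established, the defining relation (\ref{eq:coc}) can be solved with the \emph{identity} maps as intertwiners $\tilde\d_g$, so that $\ga_{\W,\Delta}$ becomes literally equal to $1$; and since $\W_\Xi$ and the cohomology class $[\ga_{\W,\Delta}]$ do not depend on the chosen set $\Delta$ of representatives (nor on the choice of the $\tilde\d_g$), it is enough to arrange this for one convenient choice.

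First I would observe that for every $P\subseteq F_0$ the parabolic subsystem $R_P\subseteq R_0$ is again a disjoint union of components of type $\textup{A}$, so the semisimple quotient $\H_P=\H(\Ri_P,q_P)$ is an affine Hecke algebra whose root system has only type $\textup{A}$ components. The key input is then that \emph{every} discrete series representation of such an algebra is one-dimensional, for any underlying lattice and any positive parameter function; I would read this off the classification of discrete series of classical affine Hecke algebras in \cite{OpSo} (morally, the only ``residual'' representation in type $\textup{A}$ is a Steinberg-type character). Two cautionary remarks: the quotients $\H_P$ carry lattices $X_P$ of neither adjoint nor simply connected type, so one cannot merely quote the $\mathrm{GL}_n$ Steinberg statement — this is exactly why the lattice-independent classification of \cite{OpSo} is invoked; and if $q_P$ is trivial on some component of $R_P$, that component contributes no discrete series at all, which causes no difficulty.

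Granting the one-dimensionality, I would choose $\Delta_P$ to consist of the finitely many discrete series \emph{characters} $\d\colon\H_P\to\C$, each realized on the one-dimensional space $V_\d=\C$. For $g\in\W_{P,Q}$ and $\d\in\Delta_P$, the twist $\d\circ\psi_g^{-1}$ is again a one-dimensional discrete series character, this time of $\H_Q$, hence an element of $\Delta_Q$; so in fact $\d^g=\d\circ\psi_g^{-1}$ on the nose, and I take the intertwiner (\ref{eq:choice}) to be $\tilde\d_g=\mathrm{id}_\C\colon V_\d\to V_{\d^g}$. With these choices, for every composable pair $(g^\prime,g)$ (writing $\d^\prime=\d^g$) the left-hand side of (\ref{eq:coc}) equals $\mathrm{id}_\C\circ\mathrm{id}_\C=\mathrm{id}_\C=\tilde\d_{g^\prime g}$, which forces $\ga_{\W,\Delta}((g^\prime,\d^\prime),(g,\d))=1$, as desired. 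The only genuine work is the one-dimensionality statement; the rest is bookkeeping, and the main obstacle is therefore locating — or, if a self-contained argument is wanted, re-deriving via Lusztig's first reduction theorem, or via Clifford theory over the root-lattice form $\H_r\subseteq\H_P$ of Lemma \ref{lem:ind} — the classification input from \cite{OpSo}.
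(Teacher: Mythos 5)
Your proposal is correct and follows essentially the same route as the paper: both reduce the statement to the fact that every discrete series representation of $\H_P$ is one-dimensional whenever $R_P$ has only type $\textup{A}$ components, and then conclude that the resulting one-dimensional projective representation of $\W_\Delta$ has trivial cocycle (your identity-intertwiner normalization making $\ga_{\W,\Delta}$ literally equal to $1$). The only difference is how the one-dimensionality is sourced: the paper quotes Zelevinsky for the weight-lattice (extended) form $\H_P^e$, which is a tensor product of extended type $\textup{A}$ algebras, and transfers the statement down to $\H_P$ via the isogeny Lemma \ref{lem:ind} and adjointness of induction and restriction --- exactly the fallback route you mention alongside your citation of the lattice-independent classification in \cite{OpSo}.
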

\begin{proof}
If $R_0$ has only irreducible components of type $\textup{A}$ then the same
is true for any of its standard parabolic subsystems $R_P\subset R_0$.
Let $\H_P^e:=\H_P(X^e,q^e)$ be the extended semisimple affine Hecke algebra
whose root datum
has underlying root system $R_P$ with basis $F_P\subset F_0$ and whose lattice
$X_P^e=\Lambda_P$ is the weight lattice of $R_P$.
Then $\H_P^e$ is a tensor product of various
extended affine Hecke algebras
$\H^{A,e}_{\lambda_i-1}:=\H^{A}_{\lambda_i-1}(\Lambda^A_{\lambda_i-1},q)$ of type
$\textup{A}_{\lambda_i-1}$ (for various $\lambda_i\geq 2$).
Since it is well known \cite{Z} that the irreducible discrete series
representations of
$\H^{A,e}_{\lambda_i-1}$ all have dimension $1$, it follows from the above
that all the discrete series representations $\delta_P^e$ of $\H_P^e$ are
one dimensional. In particular, the restriction of an irreducible
discrete series representation of $\H_P^e$ to $\H_P$ is irreducible.
By Lemma \ref{lem:ind} and by the adjointness of restriction and induction
we see that all irreducible discrete series representations of $\H_P$ are
obtained in this way, and in particular are one dimensional. The
projective representation $\tilde{\delta}$ of $\W_{\Delta}$ is thus one dimensional,
hence trivial. In particular, its class $[\ga_{\W,\Delta}]$ is trivial.
\end{proof}
\subsection{$R_0$ of type $\textup{B}_n$}
The next result generalizes a result of Slooten \cite{Sl1}.
\begin{prop}\label{prop:BQ}
Suppose that $\mathcal{R}$ is of type $\textup{C}_n^{(1)}$ (i.e. $R_0$ is of type
$\textup{B}_n$ and $X$ is the root lattice of $R_0$) with arbitrary
positive parameter function $q=(q_0,q_1,q_2)$,
where (using the standard realization for $C_n^{(1)}$)
$q_0=q(s_{2x_n})$, $q_1=q(s_{x_i-x_{i+1}})$ and $q_2=q(s_{1-2x_1})$.
Then $[\ga_{W,\Delta}]=1$.
\end{prop}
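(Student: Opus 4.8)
The plan is to reduce the assertion $[\ga_{W,\Delta}]=1$ to a statement about the isotropy groups of the finite groupoid $\W_\Delta$ and then feed in the explicit classification of \cite{OpSo}. First I would observe that $\W_\Delta$ is a disjoint union of connected groupoids, one for each $\W$-orbit on $\Delta$, and that each connected groupoid is equivalent to its vertex group $\W_{\Delta,(P,\d)}$ for any chosen orbit representative $(P,\d)$. Since the nerve, and hence the cohomology, of a groupoid is invariant under equivalence and multiplicative over disjoint unions, one gets $H^2(\W_\Delta,\operatorname{U}(1))=\prod H^2(\W_{\Delta,(P,\d)},\operatorname{U}(1))$, the product running over orbit representatives, and the $\mathcal{O}$-component of $[\ga_{W,\Delta}]$ is the class of the restriction of $\ga_{W,\Delta}$ to $\W_{\Delta,(P,\d)}$. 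Thus it suffices to show, for each $(P,\d)\in\Delta$, that this restriction is a coboundary; by (\ref{eq:coc}) this is equivalent to saying that the $\ga$-projective representation $\tilde\d$ of $\W_{\Delta,(P,\d)}$ on $V_\d$ can be rescaled to an honest representation. By Proposition \ref{prop:coc} we may moreover assume $\ga_{W,\Delta}$ takes values in the finite group $\mu(D_\Delta)$.

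Second I would analyse the standard parabolic subalgebras. For $R_0$ of type $\textup{B}_n$, every standard parabolic subsystem $R_P\subset R_0$ is a product of components of type $\textup{A}$ together with at most one component of type $\textup{B}_m$, the one containing the short simple root. Correspondingly (cf. paragraph \ref{subsub:par}) $\H_P$ is an outer tensor product $\H_P\cong\bigl(\bigotimes_i\H^{\textup{A}_{\lambda_i-1}}\bigr)\otimes\H^{\textup{B}_m}$, and every irreducible discrete series $\d$ of $\H_P$ decomposes as $\d=\bigl(\bigotimes_i\d^{(i)}\bigr)\otimes\d^{\textup B}$, with $\d^{(i)}$ a discrete series of a type-$\textup A$ factor and $\d^{\textup B}$ one of $\H^{\textup B_m}$. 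As recalled in the proof of Proposition \ref{prop:A} (Zelevinsky \cite{Z}, combined with Lemma \ref{lem:ind}), each $\d^{(i)}$ is one-dimensional; the $\d^{\textup B}$ are the ones classified explicitly in \cite{OpSo}. Since here $X$ is the root lattice of $\textup{B}_n$, the factor $K_{P_{\textup B}}$ of $K_P$ attached to the type-$\textup B$ block is trivial (the projection of $X$ onto $\af^*_{P_{\textup B}}$ is already the root lattice of $\textup B_m$), which simplifies the bookkeeping below.

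Third I would describe the isotropy group $\W_{\Delta,(P,\d)}=\{k\times u\mid u\in\Wf_{P,P},\ \d^{k\times u}\cong\d\}$. An element $k\times u$ acts on $\H_P$ by an automorphism which permutes the type-$\textup A$ tensor factors carrying compatible inducing data, includes the longest-element flips of individual type-$\textup A$ blocks, restricts to an automorphism of the type-$\textup B_m$ block, and twists the type-$\textup A$ factors by the type-$\textup A$ part of $k$ (the $\textup B$-part being trivial as noted). Because each $\d^{(i)}$ is one-dimensional, the corresponding part of $\tilde\d$ is a genuine one-dimensional character composed with the permutation action of the isotropy group on a fixed finite direct sum of lines; permutation actions and one-dimensional characters contribute nothing to $[\ga_{W,\Delta}|_{\W_{\Delta,(P,\d)}}]$. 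This reduces the problem to the type-$\textup B_m$ block: one must show that the stabilizer of $\d^{\textup B}$ in the relevant group of automorphisms of $\H^{\textup B_m}$ acts on $V_{\d^{\textup B}}$ by an honest (non-projective) representation.

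Finally, for the type-$\textup B_m$ block I would use the realization of $\d^{\textup B}$ furnished by Lusztig's first reduction theorem \cite{Lu} together with the combinatorial classification of \cite{OpSo}, and \emph{choose} the intertwiners $\tilde\d_g$ of (\ref{eq:choice}) to be those that are canonical for that construction. Tracing the action of the stabilizer through the idempotents $e_\omega$ and the normalized intertwiners $\iota^0_w$ of the reduction theorem, one checks that these intertwiners compose without anomaly, i.e. that with this normalization $\ga_{W,\Delta}\equiv 1$ on the isotropy group. I expect this last verification to be the main obstacle: it is exactly the explicit computation flagged in the remark following Theorem \ref{thm:general}, and one must confirm that for type $\textup{B}_m$ — in contrast to type $\textup{D}_n$ with $n>8$ and $X$ the root lattice — no genuine class survives. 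Slooten's computation \cite{Sl1} settles the corresponding statement when $\d^{\textup B}$ has real central character; the argument above is meant to cover arbitrary positive parameters $q=(q_0,q_1,q_2)$.
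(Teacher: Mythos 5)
Your reduction to the isotropy groups $\W_{\Delta,(P,\d)}$, the tensor decomposition of $\H_P$ into extended type-$\textup{A}$ factors and one type-$\textup{B}$ factor, and the observation that the one-dimensional type-$\textup{A}$ discrete series contribute nothing to the class all match the paper's argument. But your final step --- ``one must show that the stabilizer of $\d^{\textup B}$ in the relevant group of automorphisms of $\H^{\textup{B}_m}$ acts on $V_{\d^{\textup B}}$ by an honest representation,'' to be checked by tracing intertwiners through Lusztig's first reduction theorem --- is left as an unperformed verification, and you yourself flag it as ``the main obstacle.'' That is precisely the content of the proposition, so as written the proposal has a genuine gap: the decisive check is announced but not done.

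The missing observation, which is the whole point of the paper's proof, is that in this case the relevant group of automorphisms of the type-$\textup{B}_l$ tensor leg is \emph{trivial}, so there is nothing to verify. You already noted half of this: since $X$ is the root lattice of $\textup{B}_n$, the group $K_Q$ is a product of cyclic groups attached only to the type-$\textup{A}$ blocks and has no component twisting the type-$\textup{B}$ factor. The other half is that an element $u\in\Wf_{P,P}$ restricts on the type-$\textup{B}_l$ component to a diagram automorphism of $\textup{B}_l$, and the finite Dynkin diagram of type $\textup{B}_l$ admits no nontrivial automorphisms; hence $\psi_g$ acts as the identity on the last tensor leg for every $g=k\times u$. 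Consequently, after choosing basis vectors for the one-dimensional $\d^{(i)}$, one may take $\tilde\d_g=\operatorname{Id}_{V_\d}$ for all $g$, and $\ga_{\W,\Delta}=1$ on the nose --- no appeal to the reduction theorem or to the \cite{OpSo} classification of the type-$\textup{B}$ discrete series is needed. Your analogy with the type-$\textup{D}_n$ computation is exactly where the two cases diverge: there the $\textup{D}_l$ block does carry nontrivial diagram automorphisms and a nontrivial $K$-factor, which is why a genuine obstruction can (and for $n>8$ does) survive, whereas for type $\textup{B}$ the obstruction group is empty from the start.
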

\begin{proof}
In order to analyze the
cocycle $\ga_{\W,\Delta}$, let us first look more carefully at the
type $\textup{A}$ case. Let $\H^A_{n-1}(X,q)$ be an affine Hecke algebra
with $R_0$ of type $\textup{A}_{n-1}$ and with lattice $X$ (situated between
the root lattice $\Lambda_r$ and the weight lattice $\Lambda$ of $R_0$)
and parameter $q\not=1$ (if $q=1$ there are no
discrete series). If $q\not=1$ then the set $\Delta^A_{n-1}(X,q)$
of equivalence classes of discrete series representations of the affine
Hecke algebra of type $\textup{A}_{n-1}$ with lattice $X$ is in canonical
bijection with the set $K_{n-1}^A(X)$ of characters of $X$ which are
trivial on the root lattice $\Lambda_r$ of $R_0$, through the central character map.
Namely, in terms of the notation of \cite[Section 8]{OpSo} we have
$K_{n-1}^A(X)=\Gamma^{max}/\Gamma$ where
$\Gamma^{max}=\textup{Hom}(\Lambda/\Lambda_r,\mathbb{C}^\times)\approx C_n$.
The group $\Gamma^{max}$ acts simply transitively on
the set of vertices $E(C^\vee)$ (notations as in loc. cit.). Hence the group
$K$ acts simply transitively on the set of $\Gamma$-orbits on $E(C^\vee)$, and
for each $s\in E(C^\vee)$ we have $\Gamma_s=1$. By \cite[Theorem 8.7]{OpSo} the
set $\Delta^A_{n-1}$ is a disjoint union over all $\Gamma$-orbits of $E(C^\vee)$
of the set of discrete series characters of the graded affine Hecke algebra
$\mathbf{H}(R_{s(e),1},V,F_{s(e),1},k_e)$. In the type $\textup{A}_{n-1}$-case, these
are all isomorphic to a graded affine Hecke algebra of type $\textup{A}_{n-1}$, and
hence each of these contributes precisely one discrete series character
(since we assume that $q\not=1$, implying that $k_e=k\not=0$).
If $k\in K^A_{n-1}(X)$ we denote by $\delta_k$ the unique,
one-dimensional discrete series character of the type $\textup{A}_{n-1}$ affine Hecke algebra
$\H^A_{n-1}(X,q)$  whose central character has unitary part $k$. Then
$\Delta^A_{n-1}(X)=\{\delta_k\mid k\in K^A_{n-1}(X)\}$.
Through the twisting automorphisms $\psi_k$ (see paragraph \ref{subsub:groupoid})
the group
$K_{n-1}^A(X)$ acts on $\Delta_{n-1}^A(X)$. We have
\begin{equation}
\delta^k_{k^\prime}:=\delta_{k^\prime}\circ\psi_{k}^{-1}=\delta_{k^\prime k^{-1}}
\end{equation}
Now we return to
the case where $\mathcal{R}$ is equal to $C^{(1)}_n$.
The possible pairs $(P,\delta_P)$ with $\delta_P\in\Delta_P$ can be described
explicitly as follows.
First notice that $P\subset F_0$ is of type
\begin{equation}\label{eq:Atype}
\textup{A}_{\l_1-1}\times \textup{A}_{\l_2-1}\times\dots\times \textup{A}_{\l_r-1}\times
\textup{B}_l
\end{equation}
where $l\leq n$ and where
$\l=(\l_1, \l_2, \dots, \l_r)$ is a composition of $n-l$.
In this situation $\H_P$ is of the form
\begin{equation}\label{eq:tensor}
\H^A_{\l_1-1}(\Lambda_{\l_1-1}^A,q_1)\otimes \H^A_{\l_2-1}(\Lambda_{\l_2-1}^A,q_1)\otimes\dots\otimes
\H^A_{\l_r-1}(\Lambda_{\l_r-1}^A,q_1)\otimes \H^B_l(q)
\end{equation}
where $\H^A_{\l_i-1}(\Lambda_{\l_i-1}^A,q_1)$ denotes the extended affine Hecke algebra
of type $\textup{A}_{\l_i-1}$ whose lattice equals the weight lattice
$\Lambda_{\l_i-1}^A$ of $\textup{A}_{\l_i-1}$ and
with parameter $q_1$, and where $H^B_l(q)$ is the affine Hecke algebra of
type $\textup{C}_l^{(1)}$ with parameter $q$. Thus
\begin{equation}\label{eq:deltaP}
\delta_P=\delta_{1,k_1}\otimes\delta_{2,k_2}\otimes\dots\otimes\delta_{r,k_r}\otimes\delta
\end{equation}
where $k_i\in K^A_{\l_i-1}(\Lambda_{\l_i-1}^A)$ (a cyclic group of order $\l_i$),
$\d_i$ is the unique irreducible discrete series representation of
$\H^A_{\l_i-1}(\Lambda_{\l_i-1}^A,q_1)$ with real infinitesimal character, and where
$\delta$ is a discrete series representation of $\H^B_l(q)$.
As discussed in paragraph \ref{subsub:groupoid}, an element
$g=k\times u\in\W_{P,Q}=K_Q\times \Wf_{P,Q}$ gives rise
to an automorphism $\psi_g:\H_P\to\H_Q$.
In the present situation, $u$ is a composition
of a permutation of tensor legs of the form $\H^{A}_{\l_i-1}(\Lambda_{\l_i-1}^A,q_1)$
in (\ref{eq:tensor}) with equal $\lambda_i$, with a tensor product of
automorphisms of the tensor factors
$\H^{A}_{\l_i-1}(\Lambda_{\l_i-1}^A,q_1)$ induced by a diagram automorphisms
of the finite Dynkin diagram of type $\textup{A}_{\l_i-1}$.
Recall that
$K_Q=T^Q\cap T_Q\approx\textup{Hom}(X_Q/(X\cap\mathbb{R}Q),\mathbb{C}^\times)$
(by (\ref{eq:KP})).
Therefore $K_Q$ is the direct product of the cyclic
groups $K^{A}_{\l_i-1}(\Lambda_{\l_i-1}^A)$ of order $\l_i$, and
$k=k_i\times k_2\times\dots\times k_r$ acts by the tensor product of the
automorphisms $\psi_{k_i}$ described above. The crucial observation to make
here is that the action of $\psi_g$ on the last tensor leg is always trivial.
If we choose a basis vector for each of the one dimensional representations
$\d_{i,k_i}$ we obtain a natural identification of the vector space of $\delta_P$
with the vector space $V_\d$ on which $\d$ is realized.
With this identification, we can choose
$\psi_g=\operatorname{Id}_{V_\d}$ for all $g$, and hence
$\ga_{\W,\Delta}=1$.
\end{proof}
\begin{prop}\label{prop:BP}
Consider the irreducible \emph{extended} affine Hecke algebra
$\H:=\H_n^{B,e}$ with $R_0$ of type $\textup{B}_n$ and $X=\Lambda_n^B$,
the weight lattice of type $\textup{B}_n$. Then $[\ga_{\W,\Delta}]=1$.
\end{prop}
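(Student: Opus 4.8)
The plan is to reduce the extended case $\H=\H_n^{B,e}$ (root datum with $R_0$ of type $\textup{B}_n$ and $X=\Lambda_n^B$ the weight lattice) to the already-treated case $\textup{C}_n^{(1)}$ of Proposition \ref{prop:BQ} by applying Lemma \ref{lem:ind} on isogenous affine Hecke algebras. Since $X=\Lambda_n^B\supset\Lambda_r$, the algebra $\H_n^{B,e}$ is an isogenous extension of the affine Hecke algebra $\H_r$ whose lattice is the root lattice of $\textup{B}_n$; the latter is exactly $\H_n^B(q)$ appearing in Proposition \ref{prop:BQ} (note $R_{\mathrm{nr}}=R_{\mathrm{nr}}^e$ is preserved by this isogeny, and the parameter function is unchanged as a function on $R_{\mathrm{nr}}^\vee$). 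The index $[\Lambda_n^B:\Lambda_r]=2$. By Lemma \ref{lem:ind}, restriction and induction between $\textup{Rep}(\H_n^B)$ and $\textup{Rep}(\H_n^{B,e})$ carry discrete series to finite direct sums of discrete series, so the discrete series of $\H_n^{B,e}$ are obtained from those of $\H_n^B$ by induction; the same applies at the level of every standard Levi subalgebra $\H_P$, since a standard parabolic of type $\textup{B}_n$ has the form (\ref{eq:Atype}) and the type-$\textup{A}$ factors contribute only one-dimensional discrete series regardless of lattice.

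The key step is then to analyze the action of an arrow $g=k\times u\in\W_{P,Q}$ on the discrete series $\delta_P$ of the extended algebra and to make a \emph{coherent} choice of the intertwiners $\tilde\delta_g$ in (\ref{eq:choice}) so that $\ga_{\W,\Delta}=1$ on the nose. As in the proof of Proposition \ref{prop:BQ}, $\delta_P$ decomposes as in (\ref{eq:deltaP}): a tensor product of one-dimensional discrete series $\delta_{i,k_i}$ on the type-$\textup{A}$ legs and a discrete series $\delta$ of the type-$\textup{B}_l$ tensor factor (now with the weight lattice). The automorphism $\psi_g$ again permutes the type-$\textup{A}$ legs with equal $\lambda_i$ and applies diagram automorphisms of the type-$\textup{A}$ Dynkin diagrams on those legs, while (the crucial point) it acts trivially on the last, type-$\textup{B}$ leg — the diagram automorphisms of $\textup{B}_l$ are trivial, and the elements of $K_Q$ decompose into the cyclic factors $K^A_{\lambda_i-1}(\Lambda^A_{\lambda_i-1})$ supported on the type-$\textup{A}$ legs, since for the extended $\textup{B}_n$ root datum $K_P$ still involves only the type-$\textup{A}$ parts. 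Choosing a basis vector in each one-dimensional space $\delta_{i,k_i}$ identifies $V_{\delta_P}$ with $V_\delta$ compatibly with all arrows, so we may take $\tilde\delta_g=\operatorname{Id}_{V_\delta}$ for every $g$, whence $\ga_{\W,\Delta}=1$.

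The main obstacle to watch for is verifying that enlarging the lattice from $\Lambda_r$ to $\Lambda_n^B$ does not introduce new discrete series on the type-$\textup{B}$ tensor factor $\H^B_l$ that fail to be defined over the smaller lattice, and does not enlarge the relevant $K_P$ beyond its type-$\textup{A}$ part; this is controlled by the classification of \cite{OpSo} and by the structure of $K_P=T^P\cap T_P$ in (\ref{eq:KP}), which for $\textup{B}_n$ still has support only on the type-$\textup{A}$ components of $P$. One should also check that inducing a one-dimensional discrete series along the type-$\textup{A}$ legs of $\H_P$ up to the extended $\H_P^e$ remains a sum of one-dimensional discrete series (immediate from \cite{Z} and Lemma \ref{lem:ind}), so that the projective representation $\tilde\delta$ of $\W_\Delta$ is a direct sum of one-dimensional pieces whose cocycle is trivializable by the explicit basis choice above. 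Granting these points, $[\ga_{\W,\Delta}]=1$ follows exactly as in Proposition \ref{prop:BQ}.
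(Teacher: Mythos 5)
There is a genuine gap, and it sits exactly at the point you flag as ``the main obstacle'' and then dismiss. For the extended algebra with $X=\Lambda_n^B$ it is \emph{not} true that $K_P=T^P\cap T_P$ has support only on the type $\textup{A}$ components of $P$. By (\ref{eq:KP}), $K_P\approx\operatorname{Hom}(X_P/(X\cap\mathbb{R}P),\mathbb{C}^\times)$, and here $X_P$ is the projection of the weight lattice $\Lambda=\Lambda_n^B$ onto $\mathbb{R}P$, which is the full weight lattice $\Lambda_P$ of $R_P$ --- in particular its type $\textup{B}_l$ component is the weight lattice of $\textup{B}_l$, not the root lattice $\mathbb{Z}^l$ as in the $\textup{C}_n^{(1)}$ case. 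One then checks that if some $\lambda_i$ is odd, $\Lambda\cap\mathbb{R}P$ equals the root lattice $\Lambda_{P,r}$, so that $K_P^{B,\Lambda}=K_P\times C_2$ with the extra $C_2$ acting by a \emph{nontrivial} twisting automorphism on the type $\textup{B}$ tensor factor $\H_P^{B,e}$ (and when all $\lambda_i$ are even one gets an index two subgroup of $K_P\times C_2$ which can still surject onto that $C_2$). Consequently the crucial observation from Proposition \ref{prop:BQ} --- that every $\psi_g$ acts trivially on the last tensor leg, so that one may take $\tilde\delta_g=\operatorname{Id}_{V_\delta}$ --- fails here: arrows exist whose action twists the higher-dimensional discrete series $\delta_P^{B,e}$ by a nontrivial character, and when $\delta_P^{B,e}$ is isomorphic to its twist the intertwiner $\tilde\delta_g$ is a genuinely nonscalar operator that cannot simply be normalized to the identity.

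The paper closes this gap by a different mechanism: it reduces to the isotropy groups $\W_{(P,\delta_P),(P,\delta_P)}$, embeds each into a product $\W^{A,e}\times\W^{B,e}$ respecting the tensor decomposition $\delta_P=\delta_P^{A,e}\otimes\delta_P^{B,e}$, and observes that the resulting projective representation is a tensor product of a one-dimensional factor (automatically linearizable) with a projective representation of a group contained in $C_2$, which is linearizable because $H^2(C_2,\mathbb{C}^\times)=1$. Your opening reduction via Lemma \ref{lem:ind} to Proposition \ref{prop:BQ} also does not substitute for this: triviality of $[\ga_{\W,\Delta}]$ for the isogenous subalgebra does not formally transfer to the extension, precisely because the groups $K_P$ and the discrete series (which may split or merge under induction/restriction) change. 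So the conclusion is correct, but your argument as written would prove too much --- it would show $\ga_{\W,\Delta}=1$ identically by a basis choice, whereas the extended case genuinely requires the cohomological input $H^2(C_2,\mathbb{C}^\times)=1$ applied to the type $\textup{B}$ factor.
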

\begin{proof}
The proof of Proposition \ref{prop:BQ} changes only slightly. In the present
situation $q$ is restricted to the cases where $q_0=q_2$.
The last tensor leg of the algebra $\H_P$ in (\ref{eq:tensor}) changes to
the extended algebra $\H^{B,e}_l$. For future reference we note
that $\H_P$ is again of the form $\H_P=\H_P^{A,e}\otimes\H_P^{B,e}$, a tensor product
of a number of extended type $\textup{A}$ Hecke algebras with an extended type
$\textup{B}$ Hecke algebra (recall that the lattice $X_P=\Lambda_P$ underlying $\H_P$ is
the projection of the weight lattice $\Lambda$ onto the vector space $\mathbb{R}P$
which is indeed the weight lattice of $R_P$).
Accordingly we have $\delta_P=\delta_P^{A,e}\otimes\delta_P^{B,e}$
(analogous to (\ref{eq:deltaP})).

The group $K_P^{B,\Lambda}=T_P\cap T^P$ that needs to be considered in the
definition of $\W_\Delta$ equals, as before, the group of characters of $\Lambda_P$
which are trivial on the sublattice $\Lambda\cap \mathbb{R}P$.
In the present situation one checks
easily that if there exists at least one odd $\lambda_i$ then
$\Lambda\cap\mathbb{R}P=\Lambda_{P,r}$, the root lattice of $R_P$.
Therefore $K_P^{B,\Lambda}=K_P\times C_2$ in this case, where the group $K_P$
is defined as above for the previous case $\H_n^B:=\H(\textup{C}_n^{(1)},q)$
(i.e. a direct product of cyclic groups) and where the extra factor $C_2$
(the group with $2$ elements) acts (by twisting automorphisms) on rightmost
tensor factor $\H_P^{B,e}$ only.

On the other hand, if all $\lambda_i$ are even, then
$\Lambda\cap\mathbb{R}P=\Lambda_{P,r}+(v+\Lambda_{P,r})$ where $v\in\Lambda_P$ is
a vector having coordinates $\pm 1/2$ such that for each part $\lambda_i$ of $\lambda$
the corresponding coordinates of $v$ sum up to zero (which is possible because $\lambda_i$
is even). Hence in this case the group $K_P^{B,\Lambda}$ is equal to
the kernel of the unique quadratic character $\rho$ of $K_P\times C_2$ which is
nontrivial on all factors (recall that all factors are even cyclic groups,
hence admit a unique nontrivial quadratic character). Thus
$K_P^{B,\Lambda}$ is a subgroup of index $2$ in $K_P\times C_2$ in this
case.

It is at this point useful to remark that $\W_{\Delta}$ is equivalent to
a finite union of the isotropy groups $\W_{(P,\delta_P),(P,\delta_P)}$ by
choosing a complete set of representatives for the $\W_\Delta$-orbits
of pairs $(P,\delta_P)$. Hence it is enough to show that the restriction
$\ga_{P,\delta_P}$ of $\ga$ to $\W_{(P,\delta_P),(P,\delta_P)}$ is trivial
for each pair $(P,\delta_P)$.
Recall that $\delta_P=\delta_P^{A,e}\otimes\delta_P^{B,e}$.
By the above we see that
$\W_{(P,\delta_P),(P,\delta_P)}\subset \W^e_{(P,\delta_P),(P,\delta_P)}:=
\W_{(P,\delta_P^{A,e}),(P,\delta_P^{A,e})}^{A,e}\times\W_{(P,\delta_P^{B,e}),(P,\delta_P^{B,e})}^{B,e}$
(a subgroup of index at most $2$, depending on $P$),
where the factor
$\W_{(P,\delta_P^{B,e}),(P,\delta_P^{B,e})}^{B,e}
\subset K_l^B(\Lambda^B_l)=C_2$
(recall that the finite Dynkin diagram of type $\textup{B}_l$
has no nontrivial diagram automorphisms) is either trivial
or isomorphic to $C_2$ (depending on $\delta_P^{B,e}$). The projective
representation $\tilde{\delta}_P$ of $\W_{(P,\delta_P),(P,\delta_P)}$
(whose class is $\ga_{P,\delta_P}$) is the restriction of a projective
representation $\tilde{\delta}_P^e$ of $\W^e_{(P,\delta_P),(P,\delta_P)}$
(which is defined as usual, by twisting $\delta_P=\delta_P^{A,e}\otimes\delta_P^{B,e}$
with the automorphisms of $\H_P$ coming from $\W^e_{(P,\delta_P),(P,\delta_P)}$).
Observe that $\tilde{\delta}_P^e$ is the tensor product of a projective
representation $\tilde{\delta}^{A,e}_P$ of $\W_{(P,\delta_P),(P,\delta_P)}^{A,e}$
and a projective representation $\tilde{\delta}^{B,e}_P$ of
$\W_{(P,\delta_P),(P,\delta_P)}^{B,e}$. The first tensor factor is linear
because it has dimension $1$ (and thus is trivial as a projective
representation), and the second tensor factor is linear since
$H^2(C_2,\mathbb{C}^\times)=1$ (actually $H^2(C_n,\mathbb{C}^\times)=1$
for any finite cyclic group $C_n$, see e.g. \cite[Exercise XX 16]{La}
(Warning: the even and odd cases have been mixed up in loc.cit.)). Hence
by restriction we see that $[\ga_{P,\delta_P}]=1$, which is
what we needed to show.
\end{proof}
\subsection{$R_0$ of type $\textup{C}_n$}
\begin{prop}\label{prop:CP}
Suppose that $R_0$ is of type $\textup{C}_n$.
If $X$ is the weight lattice of $R_0$ we denote the corresponding
affine Hecke algebra $\H^{C,e}_n$. In this case we have $[\ga_{W,\Delta}]=1$.
\end{prop}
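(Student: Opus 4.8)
The plan is to imitate the proof of Proposition \ref{prop:BP} almost verbatim, with the rightmost tensor factor now of type $\textup{C}$ instead of type $\textup{B}$. As there, $\W_\Delta$ is equivalent to the finite disjoint union of the isotropy groups $\W_{(P,\d_P),(P,\d_P)}$ over a complete set of representatives of the $\W_\Delta$-orbits of pairs $(P,\d_P)$ with $\d_P\in\Delta_P$, so it suffices to show that the restriction $\ga_{P,\d_P}$ of $\ga_{\W,\Delta}$ to each such isotropy group is a trivial cohomology class. The first step is to note that every standard parabolic subsystem $R_P\subset R_0=\textup{C}_n$ decomposes as a product $R_P\cong\textup{A}_{\l_1-1}\times\dots\times\textup{A}_{\l_r-1}\times\textup{C}_l$ of type $\textup{A}$ subsystems and at most one type $\textup{C}_l$ subsystem (the latter being the component of $P$ meeting the long simple root, if $P$ contains it, and $l=0$ otherwise).

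Since $X=\Lambda^C_n$ is the weight lattice of $\textup{C}_n$, its projection $\Lambda_P$ onto $\mathbb{R}P$ is the weight lattice of $R_P$ (exactly as recalled in the proof of Proposition \ref{prop:BP}), so $\H_P$ is the extended affine Hecke algebra with this lattice and factors as $\H_P=\H_P^{A,e}\otimes\H_P^{C,e}$, a tensor product of extended type $\textup{A}$ Hecke algebras (with their weight lattices) and the extended type $\textup{C}_l$ Hecke algebra $\H^{C,e}_l$. Accordingly $\d_P=\d_P^{A,e}\otimes\d_P^{C,e}$. Two observations now drive the argument. First, by \cite{Z} (together with Lemma \ref{lem:ind} and Proposition \ref{prop:A}) every discrete series representation of the type $\textup{A}$ factors is one-dimensional, hence $\d_P^{A,e}$ is one-dimensional. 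Second, the Dynkin diagram of $\textup{C}_l$ has no nontrivial diagram automorphism, so $\Wf_{P,P}$ acts trivially on the last tensor leg, and therefore any $g=k\times u\in K_P\ltimes\Wf_{P,P}$ fixing $(P,\d_P)$ acts on $\H^{C,e}_l$ only through the component of $k$ lying in $\operatorname{Hom}(\Lambda^C_l/\Lambda^C_{l,r},\mathbb{C}^\times)\cong C_2$. Consequently $\W_{(P,\d_P),(P,\d_P)}$ is contained, with index at most two, in a product $\W^{A,e}_{(P,\d_P^{A,e}),(P,\d_P^{A,e})}\times\W^{C,e}_{(P,\d_P^{C,e}),(P,\d_P^{C,e})}$ whose second factor is a subgroup of $C_2$.

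To finish, the projective representation $\tilde\d_P$ with class $\ga_{P,\d_P}$ is the restriction of the external tensor product $\tilde\d_P^{A,e}\otimes\tilde\d_P^{C,e}$. The first factor is linear because it is one-dimensional, and the second is linear because $H^2(C_2,\mathbb{C}^\times)=1$ (so $H^2$ of every subgroup of $C_2$ vanishes too). Hence $\tilde\d_P$ is the restriction of a genuine representation, so $[\ga_{P,\d_P}]=1$; as this holds for every representative pair $(P,\d_P)$, we conclude $[\ga_{\W,\Delta}]=1$.

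I expect the only point requiring genuine care to be the lattice bookkeeping in the second observation: verifying that the projection of $\Lambda^C_n$ to $\mathbb{R}P$ is the weight lattice of $R_P$, and locating $K_P$ inside the product of cyclic groups $\prod_i C_{\l_i}\times C_2$, an analysis parallel to the even/odd dichotomy carried out for type $\textup{B}$ in Proposition \ref{prop:BP}. As in that case, however, the final triviality statement only uses the coarse facts that the type $\textup{A}$ legs contribute a linear character and the type $\textup{C}$ leg contributes at most a factor $C_2$, so the precise structure of $K_P$ is not needed for the conclusion.
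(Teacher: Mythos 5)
Your proof is correct, but it takes a genuinely different route from the paper, which disposes of this case in one line: since the weight lattice of $\textup{C}_n$ is $\mathbb{Z}^n$, the root datum of $\H^{C,e}_n$ generates the same extended affine Weyl group as the root datum of type $\textup{C}_n^{(1)}$ (with $R_0$ of type $\textup{B}_n$ and $X=\mathbb{Z}^n$), so $\H^{C,e}_n$ is literally a specialization of the three-parameter algebra treated in Proposition \ref{prop:BQ}; as that proposition was proved for an arbitrary positive parameter function, $[\ga_{\W,\Delta}]=1$ follows with no further work. Your route --- redoing the tensor decomposition $\H_P=\H_P^{A,e}\otimes\H^{C,e}_l$, invoking the one-dimensionality of the type $\textup{A}$ discrete series, the absence of nontrivial diagram automorphisms of $\textup{C}_l$, and $H^2(C_2,\mathbb{C}^\times)=1$ --- is the method of Propositions \ref{prop:BQ} and \ref{prop:BP} transplanted to type $\textup{C}$, and it does go through. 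In fact the lattice bookkeeping you flag as the one delicate point is simpler here than in Proposition \ref{prop:BP}: the $\textup{C}_l$-block of $X=\mathbb{Z}^n$ both projects onto and intersects $\mathbb{R}R_{C_l}$ in $\mathbb{Z}^l$, so $K_P\cong\prod_i C_{\lambda_i}$ acts trivially on the type $\textup{C}$ leg and no even/odd dichotomy in the parts $\lambda_i$ arises (your ``subgroup of $C_2$'' on the last leg is actually trivial). What the paper's reduction buys is brevity; what yours buys is a self-contained argument independent of the identification of the two isogenous root data.
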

\begin{proof}
In this case $\H^{C,e}_n$ is simply a specialization of the three
parameter type $\textup{C}^{(1)}_n$ affine Hecke algebra, hence the
result follows from Proposition \ref{prop:BQ}.
\end{proof}
\begin{prop}\label{prop:CQ}
Suppose that $\H$ is the non-extended affine Hecke algebra $\H_n^{C,Q}$
i.e. $R_0$ is of type $\textup{C}_n$ and the lattice $X$ equals the root lattice
of $R_0$. Then $[\ga_{\W,\Delta}]=1$.
\end{prop}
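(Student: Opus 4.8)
The plan is to adapt the proof of Proposition \ref{prop:BP} to type $\textup{C}$, the one genuinely new feature being that for the root lattice the standard parabolic subalgebras need not be honest tensor products, which forces an appeal to Lemma \ref{lem:ind}. As in the proof of Proposition \ref{prop:BP}, one uses that $\W_\Delta$ is equivalent to the disjoint union of the isotropy groups $\W_{(P,\delta_P),(P,\delta_P)}$, so that it suffices to show that the restriction $\ga_{P,\delta_P}$ of $\ga_{\W,\Delta}$ to each such isotropy group is a coboundary. A standard parabolic subset $P\subset F_0$ in type $\textup{C}_n$ is of type $\textup{A}_{\lambda_1-1}\times\dots\times\textup{A}_{\lambda_r-1}\times\textup{C}_l$, with $\lambda=(\lambda_1,\dots,\lambda_r)$ a composition of $n-l$ and $0\le l\le n$; if $l=0$ then $P$ has only type $\textup{A}$ components and $[\ga_{P,\delta_P}]=1$ by Proposition \ref{prop:A}, so assume $l\ge 1$.

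Since $X$ is the root lattice of $\textup{C}_n$ one checks that $\rnr=R_0$, hence $R_{P,\mathrm{nr}}=R_P$; computing the projection of the root lattice onto $\mathbb{R}P$ shows that $X_P$ equals the weight lattice $\Lambda_P$ of $R_P$, except when $r\ge 1$ and every $\lambda_i$ is even, in which case $X_P$ is a sublattice of $\Lambda_P$ of index $2$. In the first case $\H_P=\bigotimes_i\H^{A,e}_{\lambda_i-1}\otimes\H^{C,e}_l$ is an honest tensor product and the proof of Proposition \ref{prop:BP} carries over verbatim: a discrete series $\delta_P=\delta_1\otimes\dots\otimes\delta_r\otimes\delta^C$ factors as a one-dimensional discrete series of the type $\textup{A}$ part (discrete series of extended type $\textup{A}$ Hecke algebras are one-dimensional, \cite{Z}) tensored with a discrete series $\delta^C$ of $\H^{C,e}_l$; the isotropy group lies in the product of the isotropy groups of the two tensor factors with index at most $2$, and $\tilde\delta_P$ is accordingly a tensor product of a one-dimensional projective representation (a coboundary, and on which a permutation of isomorphic one-dimensional type $\textup{A}$ factors acts trivially) with a projective representation of a subgroup of $K_l^C(\Lambda_l^C)\cong C_2$, where one uses that the finite Dynkin diagram of type $\textup{C}_l$ has no nontrivial automorphisms. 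Since $H^2(C_m,\C^\times)=1$ for any finite cyclic group $C_m$ (see e.g. \cite[Exercise XX 16]{La}), the second factor is linearizable, whence $[\ga_{P,\delta_P}]=1$ by restriction.

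In the remaining case --- some type $\textup{A}$ component present, all $\lambda_i$ even, $X_P\subsetneq\Lambda_P$ of index $2$ --- I would invoke Lemma \ref{lem:ind} for the isometric finite-index embedding $\H_P\subset\H_P^e:=\bigotimes_i\H^{A,e}_{\lambda_i-1}\otimes\H^{C,e}_l$ (legitimate since $R_{P,\mathrm{nr}}=R_{P,\mathrm{nr}}^e=R_P$): every discrete series of $\H_P$ is an irreducible constituent of the restriction of a discrete series $\delta_P^e=\delta_1\otimes\dots\otimes\delta_r\otimes\delta^C$ of $\H_P^e$, and the order-$2$ character $\chi$ of $\H_P^e$ trivial on $\H_P$ is supported on the type $\textup{C}$ tensor leg. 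Comparing the isotropy group $\W_{(P,\delta_P),(P,\delta_P)}$ and its projective representation with the corresponding data for $(P,\delta_P^e)$ --- for which triviality has just been established --- and noting that once more the only possible non-coboundary contribution lives on the type $\textup{C}$ leg, hence in a group of order dividing $2$, one obtains $[\ga_{P,\delta_P}]=1$ here as well. The main obstacle is precisely this case: one must allow $\delta_P^e$ to restrict reducibly to $\H_P$ and allow $\W_{(P,\delta_P),(P,\delta_P)}$ to be related to the product of the factor isotropy groups only through a $2$-group; once this bookkeeping is arranged the vanishing follows from the same two ingredients as in the main case, the one-dimensionality of type $\textup{A}$ discrete series and the triviality of $H^2$ of cyclic groups.
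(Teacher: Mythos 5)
Your overall architecture matches the paper's: split into the case where $X_P$ equals the weight lattice $\Lambda_P$ of $R_P$ (at least one $\lambda_i$ odd), where the argument of Proposition \ref{prop:BP} carries over, and the case where $X_P$ has index two in $\Lambda_P$ (all $\lambda_i$ even), which must be reduced to the first via the extension $\H_P\subset\H_P^e$. The first case is fine.

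In the second case, however, there is a genuine error. The quadratic character $\epsilon$ of $\Lambda_P$ with kernel $X_P$ (equivalently, the twisting involution whose fixed subalgebra in $\H_P^e$ is $\H_P$) is \emph{not} supported on the type $\textup{C}$ tensor leg: it is the unique $W_P$-invariant quadratic character of $\Lambda_P$ which is nontrivial on the weight lattice of \emph{every} irreducible direct summand of $R_P$, in particular on each type $\textup{A}$ leg. (Already for $P$ of type $\textup{A}_1$ inside $\textup{C}_2$ there is no type $\textup{C}$ leg at all, and $X_P$ is the $\textup{A}_1$ root lattice inside its weight lattice.) This is not a cosmetic slip; it is the entire mechanism of the reduction. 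Because $\epsilon$ is nontrivial on the type $\textup{A}$ legs, and because twisting a type $\textup{A}$ discrete series by a nontrivial $W_0$-invariant character moves the unitary part of its central character (as in the proof of Proposition \ref{prop:BQ}), twisting by $\epsilon$ acts on $\Delta_P^e$ \emph{without fixed points}. By elementary Clifford theory (\cite{RamRam}, combined with Lemma \ref{lem:ind}) this forces every irreducible discrete series of $\H_P^e$ to restrict \emph{irreducibly} to $\H_P$, and every discrete series of $\H_P$ to arise in this way; one then obtains a Morita equivalence between the extended groupoid $\W^e_{P,\Delta_P^e}$ and $\W_{P,\Delta_P}$, collapsing the second case onto the first. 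You instead place $\epsilon$ on the $\textup{C}$ leg and explicitly ``allow $\delta_P^e$ to restrict reducibly to $\H_P$'' --- but that is precisely the situation the argument must \emph{rule out}: if $\delta_P^e$ were $\epsilon$-fixed, its restriction would split into two constituents, and the comparison of isotropy groups and cocycles on the two sides would involve the extra Clifford intertwiner rather than the ``bookkeeping'' you defer. As written, the second case is not established; the missing step is the freeness of the $\epsilon$-action on $\Delta_P^e$, which rests on $\epsilon$ living (also) on the type $\textup{A}$ legs.
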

\begin{proof}
Again we compare the situation with the standard case $\textup{C}_n^{(1)}$.
The description of the standard parabolic subsystems $P\subset F_0$ is as before,
where the rightmost factor of type $\textup{B}_l$ has to be replaced by
$\textup{C}_l$ of course.
The new complication is that the affine Hecke algebra $\H_P$ is not always
a tensor product of extended type $\textup{A}$-factors and (possibly) a type
$\textup{C}$ factor. If at least one of the $\lambda_i$ is odd then $\H_P$ is
as before, a tensor product of a number of extended type $\textup{A}$ factors
$\H_{\lambda_{i-1}}^A(\Lambda_{\lambda_{i-1}},q_1)$ with at most one type $\textup{C}$
factor $\H_l^{C,e}$. In other words, the lattice associated with the Hecke algebra
$\H_P$ is the weight lattice $\Lambda_P$ of $R_P$.
However if the $\lambda_i$ are all even,
the algebra $\H_P$ is an index two subalgebra of the affine Hecke algebra just
described obtained by taking the fixed points with respect to the twisting
involution $\psi_\epsilon$ corresponding to the unique $W_P$-invariant quadratic
character $\epsilon$ of the weight lattice $\Lambda_P$ of $R_P$ which is nontrivial
when restricted to any of the weight lattices of the irreducible direct
summands of $R_P$.

In the first case (if there exists at least one odd $\lambda_i$) then the group
$K_P^{C,\Lambda_r}$ is of the form $K_P\times C_2$, where the last factor $C_2$ acts
on the extended algebra $\H_l^{C,e}$ via the twisting automorphism associated
with the nontrivial character of the weight lattice $\Lambda^C$ of type
$\textup{C}_l$ trivial on the root lattice $\Lambda^C_r$. The argument to see the
triviality of $\ga_{\W,\Delta}$ is now exactly analogous to the proof of
Proposition \ref{prop:BP}.

In the second
case (when all $\lambda_i$ are even) then $K_P^{C,\Lambda_r}$ is the quotient of
the previously described group by the subgroup $K_\epsilon=\langle\epsilon\rangle$
generated by $\epsilon$.
The second case can be reduced to the first case as follows.
Let $\H_P^e$ be the semisimple affine Hecke algebra whose root system
is $R_P$ and whose lattice $X_P^e$ equals the weight lattice $\Lambda_P$ of $R_P$.
Then $K_\epsilon$ acts on $\H_P^e$ and $\H_P=(\H_P^e)^{K_\epsilon}$. We may assume
that $P\not= F_0$, otherwise there is nothing to prove. But then, by definition,
$X_P$ contains factors of the form $\Lambda_{\lambda_i-1}^A$ with
$\lambda_i\geq 2$. Thus $\epsilon$ is a $W_P$-invariant character of $X_P^e$,
and is nontrivial on each of the type $\textup{A}$ factors.
Now we use the following special feature of the affine Hecke algebras
$\H_{\lambda_i-1}^{A,X^A}$ of type $\textup{A}_{\lambda_i-1}$ (and any lattice
$X^A_{\lambda_i-1}$): For such affine Hecke algebras, twisting by a nontrivial
$W_{\lambda_i-1,0}^{A}$-invariant character $k\in T^A_{\lambda_i-1}$ has no
fixed points on the set of equivalence classes of discrete series characters
$\Delta_{\lambda_i-1}^A$ (this follows from considering the unitary part of
the central characters of the discrete series characters, as in the proof of
Proposition \ref{prop:BQ}).
Since $\H_P^e$ is merely a product of such type $\textup{A}$-factors and a
type $\textup{C}$ factor, this shows in view of the above that
``twisting by $\epsilon$'' acts on the set $\Delta_P^e$ of equivalence classes
of discrete series representations of $\H_P^e$ \emph{without
fixed points}. In turn this implies (by elementary Clifford
theory, see \cite{RamRam}, and Lemma \ref{lem:ind})
that the restriction functor sends irreducible discrete series of $\H_P^e$
to irreducible discrete series of $\H_P$, and all discrete series of $\H_P$
are obtained in this way. Hence the action groupoid $\W^e_{P,\Delta_P^e}$
of the group $K^e_P\rtimes \Wf_{P,P}$ acting on the set of equivalence
classes $\Delta_P^e$ of irreducible discrete series representations of $\H_P^e$
via twisting automorphisms on $\H_P^e$, is Morita equivalent to $\W_{P,\Delta_P}$.
But $\W_{P,\Delta_P^e}^e$ is a union of groups
$\W^e_{(P,\tilde{\delta}_P),(P,\tilde{\delta}_P)}$ which are of the same form
as in the first case, reducing the second case to the first case as required.
\end{proof}
\subsection{$R_0$ of type $\textup{D}_n$ and $X\not=\Lambda_r$}
\begin{prop}\label{prop:Drest}
Suppose that $\H$ is an affine Hecke algebra of type
$\textup{D}_n$ whose lattice $X$ is not the root lattice.
Then $[\ga_{\W,\Delta}]=1$.
\end{prop}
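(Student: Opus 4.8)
The plan is to proceed along the lines of the arguments for types $\textup{B}$ and $\textup{C}$ (Propositions \ref{prop:BP}, \ref{prop:CP} and \ref{prop:CQ}). First I would note that for $R_0$ of type $\textup{D}_n$ the hypothesis $X\neq\Lambda_r$ means precisely that $\Lambda_r\subsetneq X\subseteq\Lambda$, so $X$ is one of $\Lambda_{\mathrm{vec}}=\mathbb{Z}^n$, the two half-spin lattices (only for $n$ even), or the full weight lattice $\Lambda$. As in the proof of Proposition \ref{prop:BP} it is enough, by the Morita equivalence of $\W_\Delta$ with a finite union of the isotropy groups $\W_{(P,\delta_P),(P,\delta_P)}$, to show that the restriction $\ga_{(P,\delta_P)}$ of $\ga_{\W,\Delta}$ to each such group is a coboundary.

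Next I would analyse the Levi structure. A standard parabolic $P\subset F_0$ is of type $\textup{A}_{\lambda_1-1}\times\cdots\times\textup{A}_{\lambda_r-1}\times\textup{D}_l$, and accordingly $\H_P\cong\H_P^{A,e}\otimes\H_P^{D}$, a tensor product of extended type $\textup{A}$ Hecke algebras with a type $\textup{D}_l$ Hecke algebra $\H_P^D$ whose lattice $X_P^D$ is the $\textup{D}_l$-component of the projection of $X$ to $\mathbb{R}P$. (That the type $\textup{A}$ legs are extended holds for any lattice containing $\Lambda_r$, since projecting picks up fundamental weights of the $\textup{A}$-factors, exactly as in Proposition \ref{prop:BQ}.) Writing $\delta_P=\delta_P^{A,e}\otimes\delta_P^D$ and fixing a basis vector in each one-dimensional type $\textup{A}$ discrete series, the isotropy group embeds as $\W_{(P,\delta_P),(P,\delta_P)}\hookrightarrow\W^{A,e}_{(P,\delta_P^{A,e}),(P,\delta_P^{A,e})}\times\W^{D}_{(P,\delta_P^D),(P,\delta_P^D)}$, and the projective representation $\tilde\delta_P$ is the restriction of $\tilde\delta_P^{A,e}\otimes\tilde\delta_P^D$.

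The type $\textup{A}$ factor $\tilde\delta_P^{A,e}$ is one-dimensional, hence trivial as a projective representation, just as in Propositions \ref{prop:A} and \ref{prop:BQ}. So everything comes down to showing that the projective representation $\tilde\delta_P^D$ of $\W^{D}_{(P,\delta_P^D),(P,\delta_P^D)}$, obtained by twisting the type $\textup{D}_l$ discrete series $\delta_P^D$, is liftable to an honest linear representation. Here I would use the hypothesis $X\neq\Lambda_r$: a parabolic $\textup{D}_l\subset\textup{D}_n$ carries no induced diagram automorphism (the fork-swap of $\textup{D}_l$ is an odd sign change, hence not in $W(\textup{D}_n)$), so $\W^{D}_{(P,\delta_P^D),(P,\delta_P^D)}$ lies inside the finite abelian group $K_P^D$ of characters of $X_P^D$ that are trivial on $X\cap\mathbb{R}P$ restricted to the $\textup{D}_l$-factor; and I would invoke the classification \cite{OpSo} of the discrete series of type $\textup{D}_l$ affine Hecke algebras to check that — because $X$ strictly contains $\Lambda_r$ — the central characters of these discrete series separate the twisting classes in $K_P^D$ sufficiently that the stabiliser of each $\delta_P^D$ is cyclic (arguing as for type $\textup{A}$ in Proposition \ref{prop:BQ}, and passing between $\H_P^D$ and an isogenous extension by Lemma \ref{lem:ind} and elementary Clifford theory as in Proposition \ref{prop:CQ}). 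Since $H^2(C_m,\mathbb{C}^\times)=1$ for every finite cyclic group, $\tilde\delta_P^D$ is then linear, so $[\ga_{(P,\delta_P)}]=1$ and hence $[\ga_{\W,\Delta}]=1$.

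The hard part will be this last step for the $\textup{D}_l$ leg: one must extract from the \cite{OpSo} classification that, for every lattice lying strictly between $\Lambda_r$ and $\Lambda$ in type $\textup{D}_l$, no discrete series is stabilised by a non-cyclic group of residual twists — precisely the feature which fails for the root lattice of $\textup{D}_n$ with $n>8$ and which is responsible for the nontrivial class $[\ga_{(P,\delta_P)}]$ recorded in the Remark above. Once this structural fact about stabilisers is in place, the tensor-product bookkeeping, the Morita reduction, and the Clifford-theoretic comparison with an isogenous extension run exactly as in the type $\textup{B}$ and $\textup{C}$ cases already treated.
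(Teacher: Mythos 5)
Your overall architecture (tensor decomposition of $\H_P$ into extended type $\textup{A}$ legs and one type $\textup{D}_l$ leg, reduction to the isotropy groups $\W_{(P,\delta_P),(P,\delta_P)}$, and disposal of the type $\textup{A}$ factor by one-dimensionality) agrees with the paper. But the key step, where you handle the type $\textup{D}_l$ leg, contains a genuine gap, and in fact two of your supporting claims are false.

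First, it is not true that the parabolic subsystem $\textup{D}_l\subset\textup{D}_n$ carries no induced diagram automorphism. An element of $\Wf_{P,P}\subset W(\textup{D}_n)$ need only stabilize $P$ setwise and may act nontrivially on the complementary type $\textup{A}$ coordinates; the fork-swap of $\textup{D}_l$ (an odd sign change) becomes an element of $W(\textup{D}_n)$ once it is compensated by an odd sign change on the type $\textup{A}$ block. This is exactly what the character $\Sigma_{odd}$ in equation (\ref{eq:Wf}) encodes: whenever $\lambda$ has an odd part, $\Wf_{P,P}$ contains elements acting on the $\textup{D}_l$-leg by the nontrivial diagram automorphism $\omega$. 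Second, the stabilizer of $\delta_P^D$ under the residual twists is typically \emph{not} cyclic: by \cite[Theorem 7.1]{OpSo} every discrete series of type $\textup{D}_l$ is fixed by $\omega$, and the lattice-character twists contribute a further group of order up to $4$ (for $X=\Lambda$ or the half-spin lattices the relevant group $\W^{D}_{P,P}$ is $\langle\omega\rangle\times\langle\eta\rangle\approx C_2\times C_2$, or a subgroup of a dihedral group of order $8$). So the appeal to $H^2(C_m,\mathbb{C}^\times)=1$ does not apply, and the "separation of twisting classes by central characters" that works in type $\textup{A}$ fails here precisely because $\eta$ and $\omega$ both fix every discrete series class.

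What is actually needed — and what the paper supplies as Lemma \ref{lem:maxext} — is the statement that every discrete series $\delta$ of the maximally extended algebra $\H^D_l(\Lambda^D_l)$ extends to a \emph{linear} representation of $\H\rtimes I_\delta$ even when $I_\delta$ contains $C_2\times C_2$ or is all of $\mathbb{D}_8$. The proof of that lemma is not formal: it uses the fact from \cite{OpSo} that every type $\textup{D}$ discrete series admits exactly two extensions to the type $\textup{B}$ algebra with $q_{2x_n}=1$, a counting argument ($|\Delta^B_l(\mathbb{Z}^l)|=2|\Delta^B_l(\Lambda^B_l)|$ combined with Clifford theory) to show that $\eta^B$ fixes every such extension, and the computation of the Schur multiplier of $\mathbb{D}_8$ together with the nontriviality of its restriction to the $C_2\times C_2$ subgroups. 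Your proposal, as written, would at best prove triviality when the isotropy group happens to be cyclic, which is not the generic situation; the distinction between $X\neq\Lambda_r$ (where the extension of Lemma \ref{lem:maxext} exists) and $X=\Lambda_r$ (where the class $[\ga_{(P,\delta_P)}]$ can be nontrivial) is not a matter of cyclicity of stabilizers but of whether the relevant discrete series extend linearly to the crossed product.
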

\begin{proof}
Again the standard parabolic subsystems $R_P$ of $R_0$ are
products of a number of type $\textup{A}$ factors and at most
one type $\textup{D}$ factor. The complicating aspect in the present
case is the more complicated structure of the group of automorphisms of the
type $\textup{D}$ factor that needs to be considered.

First let us suppose that
the lattice $X=\Lambda$, the weight lattice of $R_0$.
In this case the description of the group $K_P^{D,\Lambda}$
similar to $K_P^{B,\Lambda}$ as in the proof of Proposition \ref{prop:BP},
where we may and will assume now that $4\leq l<n$, since otherwise $R_P$
either equals $R_0$ (and there is nothing to prove in this case)
or has only type $\textup{A}$-factors (which brings us to the situation
of Proposition \ref{prop:A}).
We write
\begin{equation}
\Lambda=\Lambda^D_n=\Lambda_0+\Lambda_1=\mathbb{Z}^n+((1/2,1/2,\dots,1/2)+\mathbb{Z}^n)
\end{equation}
If there exists at least one odd $\lambda_i$
then $K_P^{D,\Lambda}$
is a direct product of the cyclic groups corresponding
to the type $\textup{A}$-factors (hence the corresponding twisting
automorphisms act trivially on the type $\textup{D}$-factor) and a
factor $C_2=\langle\eta\rangle$ acting nontrivially only on the
type $\textup{D}$-factor, where $\eta\in K_l^{D}(\Lambda^D_l)$ is
the unique character of $\Lambda^D_l$ with kernel $\mathbb{Z}^l$
(acting by twisting on the tensor factor $\H_l^D(\Lambda)$ of $\H_P$).
If all $\lambda_i$ are even, all the direct factors of the group
just described are even cyclic groups, and thus carry a unique
nontrivial quadratic character. Then $K_P^{D,\Lambda}$ is
the kernel of the product $\rho$ of all these nontrivial
characters on the cyclic factors.

Let $M$ be the largest part
of $\lambda=(\lambda_1,\dots,\lambda_r)\vdash n-l$, and let
$\mu_i$ (for $i=1,\dots, M$) be the multiplicity of
$i$ as a part of $\lambda$.
Then it is easy to see that
\begin{equation}\label{eq:Wf}
\Wf_{P,P}=(W_0(\textup{B}_{\mu_1})\times\dots\times W_0(\textup{B}_{\mu_M})\times \langle\omega\rangle)^{\Sigma_{odd}}
\end{equation}
where $\omega$ is the restriction of the unique nontrivial diagram automorphism of $\textup{D}_n$
to the subdiagram of type $\textup{D}_l$, and
where ${\Sigma_{odd}}$ is the product over all factors $W_0(\textup{B}_{\mu_{2j+1}})$ of the
linear character $\Sigma_{2j+1}$ given by taking the product of the signs
of a signed permutation and, in the last factor, of the unique nontrivial character of
$\langle\omega\rangle\approx C_2$.

In all cases we define an extension
$\W_{P,P}^e\supset \W_{P,P}$ of order $2$, where $\W_{P,P}^e$ is of the form
\begin{equation}
\W_{P,P}^e=\W_{P,P}^{A,e}\times \W_{P,P}^{D,e}
\end{equation}
with
\begin{equation}
\W_{P,P}^{D,e}=\langle\omega\rangle\times\langle\eta\rangle\approx C_2\times C_2
\end{equation}
if $\lambda$ contains odd parts, and
\begin{equation}
\W_{P,P}^{D,e}=\langle\eta\rangle\approx C_2
\end{equation}
if $\lambda$ has only even parts.
In the latter case it is again clear that $[\ga_{(P,\delta_P)}]=1$
as in Proposition \ref{prop:BP}. In the first case we need to show that
if a discrete series representation $\delta_P^D$ of $\H^D_l(\Lambda^D_l)$
contains $\langle\omega\rangle\times\langle\eta\rangle$ in its isotropy group
then it can be extended to a representation of
$\H^D_l(\Lambda^D_l)\rtimes(\langle\omega\rangle\times\langle\eta\rangle)$.
This follows from Lemma \ref{lem:maxext}.

Next we assume that $X=\mathbb{Z}^n$. With the previous situation $X=\Lambda$
in mind this case is easier, since everything is the same except that $K_P$ does not
involve the factor $\langle\eta\rangle$ now (compare with the proof of
Proposition \ref{prop:BQ}). The above arguments apply in this simpler situation as well
(but the extension of $\delta^D_P$ is obvious now,
since its isotropy is at most a $C_2$) showing that $[\ga]=1$ in this case.

Finally if $n$ is even, we need to consider two more lattices
$X=\Lambda_-$ and $\Lambda_+$ with
\begin{equation}
\Lambda_\pm=\Lambda_r+((1/2,1/2,\dots,\pm 1/2)+\Lambda_r)
\end{equation}
Let $\Lambda_P=\Lambda^A_\lambda\times \Lambda^D_l$ be the weight
lattice of $R_P$. Observe that the second projection of
$X_P\subset \Lambda_P$
is equal to the full weight lattice $\Lambda^D_l$ of type $\textup{D}_l$
(unless $l=n$, a case which we excluded at the start of this proof).
Let $K_P^\Lambda$ be the subgroup of the group of characters of the
weight lattice $\Lambda_P$ which
restrict to $1$ on the sublattice $X\cap \mathbb{R}P$. Restriction
of characters of $\Lambda_P$ to $X_P$ induces a quotient map
$q:K_P^\Lambda\mapsto K_P$. The above observation implies that
the first projection $p_1:K_P^\Lambda\to K_\lambda^A$ (by restriction
of a characters of $\Lambda_P$ to the factor $\Lambda_\lambda^A$)
is injective on the kernel of $q$. Hence, the arguments in the proof
of Proposition \ref{prop:CQ} (reducing the ``second case'' to the ``first case'')
apply and show that we can replace the quotient $K_P$ by $K_P^\Lambda$
via a suitable equivalence of groupoids. In this situation we may write
$\delta_P^e=\delta^{A,e}_P\otimes\delta^{D,e}_P$ for the extension
to $\H_P^e$ of the discrete series representation $\delta_P$ of $\H_P$. The group
$\W_{P,P}^e=K_P^e\rtimes\Wf_{P,P}^e$ of automorphisms of $\H_P^e$ that
needs to be considered now is always a subgroup
(depending on $P$) of the automorphism group
$\W_{P,P}^m=\W_{P,P}^{A,m}\times\W_{P,P}^{D,m}$ of $\H_P^e$
described by
\begin{equation}\label{eq:WPA}
\W_{P,P}^{A,m}=K_P^{A,m}\rtimes \Wf_{P,P}^A
\end{equation}
with
\begin{equation}\label{eq:WfP}
\Wf_{P,P}^A=W_0(\textup{B}_{\mu_1})\times\dots\times W_0(\textup{B}_{\mu_M});
K_P^{A,m}=C_2^{\mu_2}\times\dots\times C_M^{\mu_M},
\end{equation}
and with
\begin{equation}\label{eq:WPD}
\W_{P,P}^{D,m}=K_P^{D,m}\rtimes\langle\omega\rangle
\end{equation}
where $K_P^{D,m}$ is the character group of $\Lambda^D_l/\Lambda^D_{r,l}$
(a group of order $4$).
Therefore, arguing as in the proof of Proposition \ref{prop:BP}, it suffices
to show that $\delta_P^{D,e}$ extends to a linear representation of
$\H_P^{D,e}\rtimes I_{(P,\delta^{D,e}_P)}$, where
$I_{(P,\delta^{D,e}_P)}$ is the isotropy group of
$[\delta_P^{D,e}]$ in $\W_{P,P}^{D,m}$. This follows from
Lemma \ref{lem:maxext}, finishing the proof.
\end{proof}
The following lemma uses a nontrivial property of irreducible
discrete series representations of the graded affine Hecke algebra type
$\textup{D}_n$ proved in \cite{OpSo}.
\begin{lem}\label{lem:maxext}
Let $\H$ be the affine Hecke algebra of type $\textup{D}_l$ (with $l\geq 4$)
which is maximally extended, i.e. $X=\Lambda^D_l$, the weight lattice of $R_0$.
For convenience we take the standard realization of $R_0$, with
basis $\{e_1-e_2,\dots,e_{n-1}-e_n,e_{n-1}+e_n\}$.
Let $K$ be the group of characters of $\Lambda^D_l/\Lambda^D_{r,l}$ (a group of
order $4$) and let $\omega$ be the diagram automorphism of $R_0$ induced by the
orthogonal reflection in the hyperplane $x_n=0$. We let the group
$\W:=K\rtimes\langle\omega\rangle$ act on $\H$ by twisting automorphisms as usual.
Let $\delta\in\Delta(\H)$, and let $I_\delta$ be the isotropy group
of $[\delta]$ in $\W$. Then $\delta$ extends to a representation of $\H\rtimes I_\delta$.
\end{lem}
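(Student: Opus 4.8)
The plan is to read the extension problem as the vanishing of a cohomological obstruction, cut down the acting group $\W$, and then feed in the classification of discrete series of the graded affine Hecke algebra of type $\textup{D}$ from \cite{OpSo}. First I would set up the obstruction: since $I_\delta$ stabilizes the isomorphism class of $\delta$, for each $g\in I_\delta$ choose $\tilde\delta_g\in GL(V_\delta)$ intertwining $\delta\circ\psi_g^{-1}$ with $\delta$; the failure of $g\mapsto\tilde\delta_g$ to be a homomorphism is a $2$-cocycle $\gamma_\delta\in Z^2(I_\delta,\mathbb{C}^\times)$, and $\delta$ extends to $\H\rtimes I_\delta$ precisely when $[\gamma_\delta]=1$ in $H^2(I_\delta,\mathbb{C}^\times)$. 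Next I would pin down $\W=K\rtimes\langle\omega\rangle$: the group $K$ of characters of $\Lambda^D_l/\Lambda^D_{r,l}$ has order $4$, and $\omega$ acts on it by inversion (if $l$ is odd, $K\cong C_4$) or by interchanging the two half-spin generators (if $l$ is even, $K\cong C_2\times C_2$); in either case $\W$ is the dihedral group of order $8$, with centre the order-$2$ subgroup $\langle\eta\rangle$ generated by the vector character $\eta$ (the unique nontrivial character of $\Lambda^D_l/\mathbb{Z}^l$). Every order-$4$ subgroup of $\W$ contains $\langle\eta\rangle$, one of its two Klein four subgroups is $\langle\omega,\eta\rangle$, and since $H^2$ of a finite cyclic group vanishes while the nontrivial class of $H^2(\W,\mathbb{C}^\times)$ restricts nontrivially to each Klein four subgroup (check on the order-$16$ covering group of $\W\cong D_4$), it suffices to prove $[\gamma_\delta|_V]=1$ for each subgroup $V\subseteq I_\delta$ with $V\cong C_2\times C_2$.

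Second I would apply Lusztig's first reduction theorem in the form used in \cite[Section 2.6]{DO} and \cite{OpSo}. If $W_0 r$ is the unitary central character of $\delta$, the formal completion of $\H$ at $W_0 r$ is a matrix algebra over the completion of a graded affine Hecke algebra $\mathbf H$ attached to the corresponding residual point, $[\delta]$ corresponds to an irreducible discrete series $\bar\delta$ of $\mathbf H$, and $\mathbf H$ is a tensor product of a graded affine Hecke algebra of type $\textup{D}_m$ (some $m\le l$) with graded affine Hecke algebras of type $\textup{A}$. This correspondence is $\W$-equivariant and transports $\gamma_\delta$ to the analogous obstruction for $I_\delta$ acting on $\bar\delta$: twisting $\delta$ by $k\in K$ shifts $W_0 r$ to $W_0(kr)$, and when $kr$ lies in $W_0 r$ this shift is implemented on the completion by conjugation by a normalized intertwiner $\iota^0_{w}$, so the action of $I_\delta$ on $\mathbf H$ is, modulo inner automorphisms, carried by the diagram automorphism group of the $\textup{D}_m$-factor, i.e. by $\langle\omega_{\textup{D}_m}\rangle$ inside $\mathbf H(\textup{B}_m)=\mathbf H(\textup{D}_m)\rtimes\langle\omega_{\textup{D}_m}\rangle$ (the type $\textup{A}$ legs play no role, their discrete series being one dimensional, exactly as in Proposition \ref{prop:A} via \cite{Z} and Lemma \ref{lem:ind}).

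Third, since $\bar\delta$ is fixed by $\omega_{\textup{D}_m}$, I would invoke the classification of discrete series of the graded affine Hecke algebra of type $\textup{D}_m$ from \cite{OpSo}: the restriction functor $\operatorname{Res}\colon\operatorname{Rep}(\mathbf H(\textup{B}_m))\to\operatorname{Rep}(\mathbf H(\textup{D}_m))$ sends irreducible discrete series to multiplicity free modules, so $\bar\delta$ is the restriction of an irreducible discrete series $\tilde{\bar\delta}$ of $\mathbf H(\textup{B}_m)$. As $\mathbf H(\textup{B}_m)=\mathbf H(\textup{D}_m)\rtimes\langle\omega_{\textup{D}_m}\rangle$ is a genuine crossed product, $\tilde{\bar\delta}$ yields a lift of $\psi_{\omega_{\textup{D}_m}}$ whose square is the identity; combining this with the homomorphism property $w\mapsto\iota^0_w$ of \cite[Lemma 4.1]{O} and with the realization of the $K$-part of $I_\delta$ by inner automorphisms $\operatorname{Ad}(\iota^0_{w_g})$ from the previous paragraph, one realizes, for any Klein four subgroup $V\subseteq I_\delta$, all the intertwiners $\tilde\delta_g$ ($g\in V$) inside the single module $\tilde{\bar\delta}$ as images of elements of $\mathbf H(\textup{B}_m)^\times$ that multiply strictly (here one uses that $V$ is abelian and that $\eta$ is central in $\W$, so the relevant Weyl-group elements can be chosen to commute); hence $\gamma_\delta|_V$ is a coboundary, which by the first step gives $[\gamma_\delta]=1$.

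The crux is the third step. The multiplicity freeness of $\operatorname{Res}^{\textup{B}_m}_{\textup{D}_m}$ on discrete series is the genuinely nontrivial input: unlike in types $\textup{A}$, $\textup{B}$, $\textup{C}$ there is no soft Clifford-theoretic argument, and one must use the explicit parametrization of type $\textup{D}$ discrete series of \cite{OpSo}. The remaining work — delicate but essentially bookkeeping — is to verify that Lusztig's reduction is honestly $\W$-equivariant and compatible with the Hilbert algebra structure (so the two obstruction classes really agree), that the $K$-part of $I_\delta$ is implemented by inner automorphisms of $\mathbf H(\textup{B}_m)$ whose lifts commute with the lift of $\omega_{\textup{D}_m}$, and — in the special case $m=4$, where $\operatorname{Out}(\mathbf H(\textup{D}_4))$ is larger because of triality — that only the standard diagram automorphism occurs.
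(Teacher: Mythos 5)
Your opening reduction is sound and matches the paper's: $\W\cong\mathbb{D}_8$ with centre $\langle\eta\rangle$, $H^2$ of cyclic groups vanishes, and the nontrivial class of $H^2(\mathbb{D}_8,\mathbb{C}^\times)$ restricts nontrivially to both Klein four subgroups, so everything comes down to killing $\ga_\delta$ on a subgroup $V\cong C_2\times C_2$ of $I_\delta$ (the paper pins this down further: by \cite[Theorem 7.1]{OpSo} one always has $\omega\in I_\delta$, so the only relevant $V$ is $N=\langle\eta,\omega\rangle$). The gap is in your third step. Having extended $\bar\delta$ to $\tilde{\bar\delta}$ on $\mathbf{H}(\textup{D}_m)\rtimes\langle\omega_{\textup{D}_m}\rangle$, you claim the remaining intertwiners for $V$ are images of elements of $\mathbf{H}(\textup{B}_m)^\times$ that ``multiply strictly,'' because $V$ is abelian and the underlying Weyl-group elements commute. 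This begs the question: commutativity of the (outer, or inner-modulo-centre) automorphisms being implemented says nothing about whether the chosen \emph{lifts} to $\operatorname{GL}(V_\delta)$ commute or anticommute -- that ambiguity is precisely the nontrivial class in $H^2(C_2\times C_2,\mathbb{C}^\times)$. The paper's own counterexample (paragraph \ref{subsub:nontriv}, for $X=\Lambda_{r,n}$) shows this is not a hypothetical worry: there $\eta$ acts by an \emph{inner} automorphism of $\mathbf{H}_e$ and the two intertwiners nevertheless satisfy $(M(\kappa)M(\omega))^2=-\operatorname{Id}$, so the cocycle is nontrivial. Nothing in your argument distinguishes the weight-lattice case from that one.

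What is missing is the second nontrivial input the paper uses: a proof that the extension $\delta_\pm$ of $\delta$ to $\H\rtimes\langle\omega\rangle=\H^B_l(\Lambda^B_l)$ is itself invariant under the involution $\eta^B$ (whose fixed-point algebra is $\H^B_l(\mathbb{Z}^l)$ and which restricts to $\eta$ on $\H$); granting that, $\delta_\pm$ extends over $\langle\eta^B\rangle$ since $H^2(C_2,\mathbb{C}^\times)=1$, and one gets a genuine linear representation of $\H\rtimes N$. The paper establishes this invariance not by any local computation with intertwiners but by a global counting argument: \cite[Theorems 7.1 and 8.7]{OpSo} give $|\Delta^B_l(\mathbb{Z}^l)|=2\,|\Delta^B_l(\Lambda^B_l)|$, and by Clifford theory (Lemma \ref{lem:ind} and \cite[Theorem A.13]{RamRam}) a single $\eta^B$-non-invariant discrete series class would force a strict inequality. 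You should also note that your identification of the ``crux'' is therefore off: the multiplicity-one/extension statement for $\operatorname{Res}^{\textup{B}}_{\textup{D}}$ handles only the $\omega$-direction, while the $\eta$-direction is the step your proposal leaves unproved.
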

\begin{proof}
We first observe that $\W\approx \mathbb{D}_8$, the dihedral group of order $8$.
Let $\epsilon\in\W$ be an element of order $4$, and let $\eta=\epsilon^2$
(the generator of the center of $\W$). Then $\W=\langle\epsilon,\omega\rangle$.
We define $\kappa:=\omega\epsilon$, an element of order $2$. There are $3$ subgroups
of index $2$ in $\mathbb{D}_8$, one of which is cyclic.
If $l$ is odd then $K=\langle\epsilon\rangle\approx C_4$;
if $l$ is even then $K=\langle\eta,\kappa\rangle\approx C_2\times C_2$.
The other subgroup of index $2$ is $N=\langle\eta,\omega\rangle\approx C_2\times C_2$.

It follows from  \cite[Theorem 7.1, Theorem 8.7]{OpSo}
that any $\delta\in\Delta(\H)$ admits (precisely two) extensions
($\delta_-,\delta_+$ say)
to $\H\rtimes\langle\omega\rangle=\H^B_l(\Lambda^B_l)$
(with $q_{2x_n}=1$).
In particular, we always have $\omega\in I_\delta$.
It follows that either $I_\delta$ is cyclic (in which case
the desired result is obvious, since cyclic groups have a trivial
Schur multiplier (see the remark at the end of the proof of
Theorem 6.5)) or $N\subset I_\delta$.

We now use \cite{IY} that the Schur multiplier of the group $\mathbb{D}_8$ is
$C_2$, and that \cite[Section 3]{R} the restriction of its unique nontrivial
class $[\alpha]\in H^2(\mathbb{D}_8,\mathbb{C}^\times)$ to both subgroups of type
$C_2\times C_2$ is nontrivial. Combined with the above remarks we see that it
suffices to prove that $\delta$ extends to $\H\rtimes N$ if $N\subset I_\delta$,
which is what we will assume from now on.

As we have already remarked, $\delta$ extends
to irreducible discrete series representations $\delta_\pm$ of
$\H^B_l(\Lambda^B_l)$ (with $q_{2x_n}=1$).
This algebra admits an involutive automorphism $\eta^B$
whose fixed point set is $\H^B_l(\mathbb{Z}^l)$
(with $q_0=q_2=1$; recall that $\Lambda^B_{r,l}=\mathbb{Z}^l$)
which fixes $\omega$ and which restricts to $\eta$ on
$\H^D_l(\Lambda^D_l)\subset\H^B_l(\Lambda^D_l)$.
To prove that $\delta$ extends to $\H\rtimes N$ it suffices to show that $\eta^B$
is in the isotropy group of $[\delta_\pm]$. We claim actually that this holds
true for any $\delta\in\Delta(\H^B_l(\Lambda^B_l))$.
Indeed, applying \cite[Theorem 7.1, Theorem 8.7]{OpSo} to
$\H^B_l(\Lambda^B_l)$ (with $q_{2x_n}=1$) and to its fixed point algebra
$\H^B_l(\mathbb{Z}^l)=\H(\textup{C}_l^{(1)})$ (with $q_0=q_2$)
(see \cite[Example 8.3]{OpSo}) under $\eta^B$ we see that
$|\Delta^B_l(\mathbb{Z}^l)|=2|\Delta^B_l(\Lambda^B_l)|$. In view of
Lemma \ref{lem:ind} and \cite[Theorem A.13]{RamRam} we conclude on the other
hand that if there would exist a $\delta\in\H^B_l(\Lambda^B_l)$ whose class is not
invariant for $\eta^B$ then we would necessarily have
$|\Delta^B_l(\mathbb{Z}^l)|<2|\Delta^B_l(\Lambda^B_l)|$, proving our claim and thus
finishing the proof of the Lemma.
\end{proof}
\subsection{$R_0$ of type $\textup{D}_n$ and $X=\Lambda_r$}
In the one remaining classical case, the affine Hecke algebra of type $\textup{D}_n$
with $X=\Lambda^D_{r,n}$, the root lattice of $R_0$, it is (remarkably) not always true that
that $[\ga_{\W,\Delta}]=1$. Yet we have:
\begin{prop}\label{prop:DQ}
Let $\H=\H^D_n(\Lambda_{r,n})$ and let $\W_{\Xi_u}$ be its groupoid of unitary standard
induction data. Then $[\ga_{\W,\Xi}]=1$.
\end{prop}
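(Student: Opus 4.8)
The plan is to exploit the fact (Definition~\ref{dfn:gaxi}) that $[\ga_{\W,\Xi}]$ is the pull-back of $[\ga_{\W,\Delta}]$ along the forgetful homomorphism of groupoids $\W_{\Xi_u}\to\W_\Delta$, together with the crucial structural difference between these two groupoids: the normal subgroupoid $\K$ acts \emph{freely} on $\Xi_u$. Thus for every $\xi\in\Xi_u$ one has $\W_{\xi,\xi}\cap\K=\{e\}$, so that $\W_{\xi,\xi}\cong\Wf_{\K\xi,\K\xi}$, a subgroup of $\Wf_{P,P}$. This is exactly what rescues us from the Remark following Theorem~\ref{thm:general}: in the $\textup{D}_n$ root-lattice case a discrete series $\delta_P$ with $[\ga_{(P,\delta_P)}]\neq 1$ contributes a nontrivial class \emph{only because} $\W_{(P,\delta_P),(P,\delta_P)}$ contains a copy of $\mathbb{D}_8\cong K\rtimes\langle\omega\rangle$ of the shape studied in Lemma~\ref{lem:maxext}, and the two Klein four-subgroups of this $\mathbb{D}_8$ that carry the obstruction both have nontrivial intersection with the lattice factor $K\subset\K$.

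First I would recall, from the classification of discrete series of classical affine Hecke algebras in \cite{OpSo} and as used in the proof of Proposition~\ref{prop:Drest} and Lemma~\ref{lem:maxext}, the explicit description of $\W_\Delta$ for $\H=\H^D_n(\Lambda_{r,n})$: every standard parabolic $R_P$ is a product of type $\textup{A}$ factors and at most one type $\textup{D}_l$ factor, and $\delta_P=\delta_P^{A}\otimes\delta_P^{D}$ correspondingly. I would then show that for every $\xi\in\Xi_u$ the restriction of the projective representation $\tilde\delta_P$ to $\W_{\xi,\xi}$ is cohomologous to an ordinary linear representation, so that $[\ga_\xi]=1$: the type $\textup{A}$ constituents are one-dimensional and are merely permuted and linearly twisted by the corresponding part of $\Wf_{P,P}$, while the type $\textup{D}_l$ constituent now meets only the $\langle\omega\rangle\cong C_2$ part of $\Wf_{P,P}$ (the $K$-part lies in $\K$, which has trivial intersection with $\W_{\xi,\xi}$), and $C_2$ has trivial Schur multiplier. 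This already recovers Theorem~\ref{thm:general}(i) in the remaining case.

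The genuinely harder step is to upgrade this pointwise vanishing to the global statement $[\ga_{\W,\Xi}]=1$ in $H^2(\W_{\Xi_u},\operatorname{U}(1))$; this does not follow formally, since over the union of compact tori $\Xi_u$ the cohomology of the action groupoid $\W_{\Xi_u}$ is not detected by the isotropy groups alone. Here I would produce a trivializing $1$-cochain of the form $c(g,\xi)=\chi_g(\xi)$, where, identifying $\Xi_{(P,\d)}$ with $T^P$, the characters $\chi_g\in X^{s(g)}$ form a $1$-cocycle for $\W$ acting on the character lattices $X^\bullet$ of the tori. A direct computation of the groupoid coboundary shows that $dc=\ga_{\W,\Xi}$ amounts to the two requirements that $g\mapsto\chi_g$ be such a $1$-cocycle and that $\chi_h(k_g)=\ga_{\W,\Delta}((h,\d^g),(g,\d))$ for the $\K$-part $k_g\in K_{s(h)}\subset T^{s(h)}$ of $g$. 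The point is precisely that over $\Xi_u$ the finite groups $K_P$ sit inside the tori $T^P$, so that characters of $T^P$ restrict onto all characters of $K_P$ --- exactly the room that is absent over $\W_\Delta$, and which explains the Remark.

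The core of the argument is then to verify, after adjusting $\ga_{\W,\Delta}$ by a coboundary on $\W_\Delta$, that it becomes cohomologous to a cocycle of the form $((h,\d^g),(g,\d))\mapsto\chi_h(k_g)$ for a suitable $\W$-equivariant family of lattice characters $\chi$. Concretely, one traces the action of $\W_{\xi,\xi}$ through Lusztig's first reduction isomorphisms \cite{Lu}, as in the analysis underlying the Remark, to see that the $\mathbb{D}_8$-obstruction is accounted for by the evaluation on the $\K$-part of the canonical character $\eta$ of $\Lambda^D_l/\Lambda^D_{r,l}$ (suitably lifted to $X^P$), which is $\W$-invariant. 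I expect this matching of the $\mathbb{D}_8$ Schur class with an evaluation of torus characters on $\K$ to be the main obstacle, since it is precisely here that the special arithmetic of the root lattice of $\textup{D}_n$ for $n>8$ enters the picture.
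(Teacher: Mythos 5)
There is a genuine gap, and it sits exactly at the point you dismiss as easy. From the freeness of the $\K$-action you correctly get $\W_{\xi,\xi}\cap\K=\{e\}$, but you then conclude that the type $\textup{D}_l$ constituent of $\delta_P$ ``meets only the $\langle\omega\rangle\cong C_2$ part of $\Wf_{P,P}$.'' That is false: $\W_{\xi,\xi}$ is not contained in $\Wf_{P,P}$, it is a graph-like subgroup of $K_P\rtimes\Wf_{P,P}$ whose elements $g=kw$ can have nontrivial $\K$-component. Writing out the isotropy condition (as the paper does), $g=kw$ fixes $\xi=(P,\delta,t)$ iff $\pi^A(k)=1$ and $wt=\pm t$; so $k$ may equal $\kappa=-1\in K_P^D$ provided $wt=-t$, and such elements twist $\delta^D$ by $\kappa$. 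Hence the composite $\pi^D\colon\W_{\xi,\xi}\to\W^D_{P,P}=\langle\omega\rangle\times\langle\kappa\rangle\approx C_2\times C_2$ can be (and in the interesting cases is) surjective, and the pullback along it of the nontrivial $\mathbb{D}_8$-class $\ga_\delta$ on $C_2\times C_2$ is exactly the obstruction you must kill. Your second paragraph does not recover Theorem \ref{thm:general}(i) in this case; it only handles the subcase where $\pi^D|_{\W_{\xi,\xi}}$ fails to be surjective (which the paper also disposes of by the tensor-product/$H^2(C_2)=1$ argument).

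Your later paragraphs do gesture at the right phenomenon --- that the $\mathbb{D}_8$ obstruction should be absorbed by evaluating torus characters on the $\K$-parts, which is possible over $\Xi_u$ but not over $\Delta$ --- but you explicitly leave the ``matching of the $\mathbb{D}_8$ Schur class with an evaluation of torus characters'' as an expected obstacle rather than proving it. That matching is the entire content of the paper's argument: one realizes $\W_{\xi,\xi}\approx\{w\in\Wf^A_{P,P}\mid wt=\pm t\}$, lifts $t$ to the double cover $\tilde T^P$ of $T^P$ to obtain a $1$-cocycle $\mu\colon\W_{\xi,\xi}\to M$ (with $M$ a finite subgroup of $\tilde T^P$ acting simply transitively on the lifts of $\pm t$), pushes $\mu$ to a cocycle $\mu_N$ valued in a quotient $N\subset\mathbb{D}_8$, and assembles $\chi(w)=i(\mu_N(w))\pi(w)$ into a genuine homomorphism $\W_{\xi,\xi}\to\mathbb{D}_8$ lifting $\pi^D_{\xi,\xi}$ through the Schur cover $\mathbb{D}_8\twoheadrightarrow C_2\times C_2$; since $\ga_\delta$ trivializes on the Schur cover, its pullback to $\W_{\xi,\xi}$ is a coboundary. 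Without this (or an equivalent) construction your proof does not close. Note also that the paper itself only establishes triviality of $\ga_\xi$ on each isotropy group (which is what Theorem \ref{thm:Rmorequiv} requires), so the ``global versus pointwise'' upgrade you worry about in paragraph three is not the missing ingredient; the missing ingredient is the pointwise statement itself in the surjective case.
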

\begin{proof}
Let $\xi=(P,\delta,t)\in\Xi_u$. We need to show that the $2$-cocycle $\ga_\xi$ of
$\W_{\xi,\xi}$ is a coboundary.
Let $P$ be as in (\ref{eq:Atype}) with $4\leq l<n$ and as before, let $\mu_i$ denote
the multiplicity of the part $i$ in $\lambda\vdash n-l$.
The group $\Wf_{P,P}$ does not depend on the lattice $X$, so is still given
by (\ref{eq:Wf}). Let us write (\ref{eq:Wf}) as
\begin{equation}
\Wf_{P,P}=(\Wf_{P,P}^A\times\Wf_{P,P}^D)^{\Sigma_{odd}}
\end{equation}
with $\Wf_{P,P}^D=\langle\omega\rangle$ ($\omega$ being the unique nontrivial
automorphism of the diagram of type $\textup{D}_l$ that extends to $\textup{D}_n$)
and
\begin{equation}\label{eq:WfPP}
\Wf_{P,P}^A=W_0(\textup{B}_{\mu_1})\times\dots\times W_0(\textup{B}_{\mu_M})
\end{equation}
and ${\Sigma_{odd}}$ the linear character defined in the text just below (\ref{eq:Wf}).
Recall that ${\Sigma_{odd}}$ is trivial on
$\Wf_{P,P}^A$ iff all parts of $\lambda$ are even.
We introduce the projections
\begin{equation}
\pi_\Wf^A:\Wf_{P,P}\to\Wf^A_{P,P},\quad \pi_\Wf^D:\Wf_{P,P}\to\Wf^D_{P,P}
\end{equation}
Notice that $\pi_\Wf^A$ is an isomorphism (always) and $\pi^D_\Wf$ is trivial
iff all parts of $\lambda$ are even.
We have by definition
\begin{equation}\label{eq:wpp}
\W_{P,P}=K_P\rtimes \Wf_{P,P}
\end{equation}
Let us now compute the lattice $X_P$ and the abelian group $K_P$ (this is similar to
the proof of Proposition \ref{prop:CQ}).
The orthogonal projection $X_P$ of the root lattice $\Lambda_{r,n}^D$ onto
$\mathbb{R}R_P$ is the product of the weight lattice of the
type $\textup{A}$ factors of $R_P$ with the lattice $\mathbb{Z}^l$
for the type $\textup{D}_l$ factor of $R_P$, provided that $\lambda$ has odd parts.
Hence in this case we have
\begin{equation}\label{eq:HPtensor}
\H_P=\H_P^A(\Lambda_P)\otimes\H^D_l(\mathbb{Z}^l)
\end{equation}
If $\lambda$ has only even parts then $X_P$ is a sublattice of index two of the lattice just
described, namely the kernel of the product $\epsilon$ of the unique nontrivial
$W_P$-invariant quadratic character of the direct summands of the above lattice
corresponding to the irreducible components of $R_P$.
Hence if $\lambda$ has odd parts then
\begin{equation}\label{eq:kappagen}
K_P=K_P^A\times K_P^D
\end{equation}
where
\begin{equation}\label{eq:KPA}
K_P^A\approx C_2^{\mu_2}\times\dots\times C_M^{\mu_M}
\end{equation}
and where
\begin{equation}\label{eq:KPD}
K_P^D:=\langle\kappa\rangle
\end{equation}
with $\kappa$ the unique nontrivial character of the lattice
$\mathbb{Z}^l$ which is trivial on the root lattice $\Lambda_{r,l}^D$ of
the root system $\textup{D}_l$. In this case we denote by
\begin{equation}
\pi_K^A:K_P\to K_P^A,\quad\pi_K^D:K_P\to K_P^D
\end{equation}
the projections onto the type $\textup{A}$ and the type $\textup{D}$ factors.
We remark that $\pi^D_K$ is invariant for the action of $\W_{P,P}$ on $K_P$.
If $\lambda$ is even then $K_P$ is the quotient
of this group by $\langle\epsilon\rangle$.
If $\lambda$ has odd parts then we define
\begin{equation}\label{eq:piD}
\pi^D=\pi_\Wf^D\times\pi_K^D:\W_{P,P}\to \W_{P,P}^D:=\Wf^D_{P,P}\times K_P^D\subset\textup{Aut}(\H^D_l(\mathbb{Z}^l))
\end{equation}
Observe that $\W_{P,P}^D=\langle\omega\rangle\times\langle\kappa\rangle \approx C_2\times C_2$.
Similarly we define
\begin{equation}
\pi^A=\pi_\Wf^A\times\pi_K^A:\W_{P,P}\to \W_{P,P}^A:=\Wf^A_{P,P}\ltimes K_P^A\subset\textup{Aut}(\H^A_P(\Lambda_P))
\end{equation}
If $\lambda$ is even, then (as in the proof of Proposition \ref{prop:CQ}) we
extend the lattice $X_P$ of $\H_P$ to obtain $\H_P^e\supset\H_P$
(a quadratic extension) whose associated
lattice $X_P^e$ is the product of the weight lattices of the
type $\textup{A}$ factors of $R_P$ with the lattice $\mathbb{Z}^l$
for the type $\textup{D}_l$ factor of $R_P$.
As in Proposition \ref{prop:CQ} this leads
to a groupoid $\W^e_{P,\Delta_P^e}$ which is Morita equivalent to $\W_{P,\Delta}$.
Using that $\pi_\Wf^D=1$ if $\lambda$ is even we can now apply the same argument
as given in the proof of Proposition \ref{prop:CQ} to conclude that
$\gamma_{P,\Delta^e}^e$ is trivial. As we know this implies the triviality of
$\ga_{\W_{P,P},\Xi_P}$ as well in this case.

\emph{So let us assume from now that $\lambda$ contains at least one odd part.}
To prove
the triviality of $\ga_{\W_{P,P},\Xi_P}$ it is enough to prove the triviality
of $\ga_\xi$ for the isotopy group $\W_{\xi,\xi}$ of an arbitrary
object $\xi=(P,\delta_P,t)$ with
$\delta_P=\delta^{A_{\lambda_1-1}}\otimes\dots\otimes\delta^{A_{\lambda_r-1}}\otimes\delta$.
Let us therefore consider the action of $\W_{P,P}$ on the space $\Xi_P$ of parameters first.
Using the action of $K_P^A$ we may and will assume that the one dimensional type $\textup{A}$
discrete series representations $\delta^{A_{\lambda_i-1}}$ all have a real central character.

In the present situation we have $T=(\mathbb{C}^\times)^n/\langle\pm 1\rangle$.
Recall that $T^P\subset T$ is the subtorus of the characters of the orthogonal projection of
$X=\Lambda_{r,n}^D$ onto the subspace $\mathbb{R}P^\perp$.
For each part $i$ of $\lambda$ this projection is generated
by generators $E^i_1,\dots,E^i_{\mu_i}$ say, which we normalize by requiring that $E^i_j$
has coordinates $1/i$ at the $i$ slots corresponding to the $j$-th part of size $i$ of $\lambda$,
while its remaining coordinates are $0$. Accordingly, for the
element in $t\in T^P$ such that $t(E^i_j)=t^i_j\in\mathbb{C}^\times$ we write:
\begin{equation}\label{eq:coord}
t=(t^1_1,t^1_2,\dots,t^1_{\mu_1},t^2_1,t^2_2,\dots,t^{M-1}_{\mu_{M-1}},t^M_1,t^M_2,\dots,t^M_{\mu_M})
\end{equation}
In order to see how $K_P$ acts on $T^P$ and on $\Delta_P$
we need to identify $K_P=T_P\cap T^P$ as a subgroup
of $T^P$ explicitly.
The group $K_P^A$ is the subgroup of elements $t$ with $(t^i_j)^i=1$,
and acts by multiplication on $T^P$.
However, there exists an additional generator $\kappa\in T^P\cap T_P\subset T$ (since $\lambda$
is not even),
which has its first $n-l$ coordinates (in $T=(\mathbb{C}^\times)^n/\langle\pm 1\rangle$)
equal to $1$ and its last $l$ coordinates equal to $-1$. Indeed, this
description makes it obvious that $\kappa\in T_P$. On the other hand, $\kappa$ can also be
given by a row of $n-l$ coordinates equal to $-1$ and a tail of $l$ coordinates equal to $1$.
This description makes it obvious that $\kappa\in T^P$. Hence we have $\kappa\in T_P\cap T^P$.
By (\ref{eq:kappagen}) $K_P$ is generated by $K_P^A$ and $\kappa$.
In the coordinates (\ref{eq:coord}) on $T^P$ the element $\kappa\in T_P\cap T^P$
equals $-1\in T^P$ (i.e. $\kappa^i_j=-1$ for all $i,j$). The subgroup $K_P\subset T^P$
is thus
given by those $t\in T^P$ with either $(t^i_j)^i=1$ (for all $i$), or with $(t^i_j)^i=-1$ (for all $i$).
The projection $\pi_K^D:K^P\to K_P^D$ is given by $\pi_K^D(k)=\kappa$ if $(k^i_j)^i=-1$ for some
pairs $(i,j)$ (hence all) with $i$ odd, and $\pi^D_K(k)=1$ else. The projection onto $K_P^A$ is
given by $\pi^A_K(k)=k$ if $\pi^D(k)=1$, and $\pi^A_K(k)=-k$ else.

The group $\Wf_{P,P}$ acts on $T^P$ by signed
permutations on the $E^i_j$ which leave the superscript $i$ unchanged.
If $g\in\W_{P,P}$ and $\xi=(P,\delta,t)$ then $g\xi:=(P,\delta^g,gt)$, where
$\delta^g\simeq\delta\circ\phi_g$.
By (\ref{eq:HPtensor}),  $\delta=\delta^A\otimes\delta^D$ and thus
\begin{equation}\label{eq:task}
\delta^g=(\delta^A\otimes\delta^D)^g=(\delta^A)^{\pi^A(g)}\otimes(\delta^D)^{\pi^D(g)}
\end{equation}

Let us show first that $[\ga_\xi]=1$ if the map $\pi^D|_{\W_{\xi,\xi}}\to \W_{P,P}^D$
is not surjective. We extend $\Wf_{P,P}$ to $\Wf^e_{P,P}=\Wf_{P,P}^A\times\Wf_{P,P}^D$ and
accordingly define $\W^e_{P,P}=\W_{P,P}^A\times\W_{P,P}^D$.
By the assumption we see that
$\W_{\xi,\xi}\subset\widetilde{\W}_{P,P}:=\W_{P,P}^A\times C_2$
where $C_2\subset \W_{P,P}^D$ is a suitably chosen subgroup.
But then the projective representation of $\widetilde{\W}_{P,P}$
defines a trivial $2$-cocycle (as in the proof of Proposition \ref{prop:BP},
using also (\ref{eq:HPtensor})),
hence in particular $[\ga_\xi]=1$ in this case.

So let us now assume that $\pi^D|_{\W_{\xi,\xi}}$ is onto $\W_{P,P}^D$.
By (\ref{eq:HPtensor}) this implies in particular that $(\delta^D)^\kappa=\delta^D$
and $(\delta^D)^\omega=\delta^D$.
Thus $\delta^D$ defines a $2$-cocycle $\ga_\delta$ of $\W^D_{P,P}$.
By (\ref{eq:task}) the desired result follows from the claim:
{\emph{The pullback of $\ga_\delta$
under the restriction $\pi^D_{\xi,\xi}:\W_{\xi,\xi}\to\W^D_{P,P}$
of the map $\pi^D$ of (\ref{eq:piD}) is a coboundary}}.
The remaining part of this proof is devoted to the proof of this claim.

The condition for $g=kw\in K_P\rtimes\Wf_{P,P}$ to be in $\W_{\xi,\xi}$
is that $(\delta^A)^{\pi^A(g)}=\delta^A$ and $gt=t$. Since $\Wf^A_{P,P}$ fixes
$\delta^A$ (by our choice to take the central character of $\delta^A$ real)
and $K_P^A$ acts freely on $\Delta_P^A$, the first equation is equivalent to
$\pi^A(k)=1$. Hence $k=\pm 1\in T^P$ (recall that $-1\in T^P$ is the element
$\kappa\in K_P$), and accordingly $\pm wt=t$.
Hence we have
\begin{equation}\label{eq:real}
\W_{\xi,\xi}\approx\{w\in \Wf^A_{P,P}=W_0(B_{\mu_1})\times\dots\times
W_0(B_{\mu_1})\mid wt=\pm t\}
\end{equation}
and in this realization the homomorphism $\pi^D_{\xi,\xi}:\W_{\xi,\xi}\to \W_{P,P}^D$
is given by $\pi_{\xi,\xi}^D(w)=\omega^{\sigma(w)}\times\kappa^{\epsilon(w)}$ where
$\sigma,\epsilon:\W_{\xi,\xi}\to C_2$ are two linear characters defined by
$(-1)^{\sigma(w)}={\Sigma_{odd}}(w)$ (with ${\Sigma_{odd}}$ as in (the proof of)
Proposition \ref{prop:Drest}), and $wt=(-1)^{\epsilon(w)} t$.

The torus $T^P$ is a direct product $T^P=\prod_{i=1}^r T^{P,(i)}$ of the
tori $T^{P,(i)}$ of characters of the root lattice of $\textup{B}_{\mu_i}$.
Now $T^{P,(i)}$ has a double cover $\tilde{T}^{P,(i)}$, the torus of
characters of the weight lattice of $\textup{B}_{\mu_i}$. The kernel of
the covering map is denoted by $\langle\eta^{(i)}\rangle$ where $\eta^{(i)}$
is the unique nontrivial $W_0(\textup{B}_{\mu_i})$-invariant element of
$\tilde{T}^{P,(i)}$. Thus we have
\begin{equation}
1\to\langle\eta^{(i)}\rangle\to\tilde{T}^{P,(i)}\to T^{P,(i)}\to 1
\end{equation}
Putting these together we get an exact sequence
\begin{equation}
1\to\langle\eta^{(1)},\dots,\eta^{(r)}\rangle\to\tilde{T}^P\to T^P\to 1
\end{equation}
where $\langle\eta^{(1)},\dots,\eta^{(r)}\rangle\approx C_2^r$.
By (\ref{eq:real}) we see that the action of $\W_{\xi,\xi}$ on $T^P$
extends to $\tilde{T}^P$.

Consider the set $S_t=\{s\in\tilde{T}^P\mid s\to\{\pm t\}\subset T^P\}$.
This set admits a free, transitive action (by multiplication) of the subgroup
$M:=\langle\eta^{(1)},\dots,\eta^{(r)}\rangle\times\langle\tilde\kappa\rangle$
of $\tilde{T}^P$, where $\tilde\kappa$ denotes a lift of $-1=\kappa\in T^P$.
This abelian group is isomorphic to $C_2^{r-1}\times C_4$ if $\lambda$ contains parts
with an odd multiplicity and is isomorphic to $C_2^{r+1}$ otherwise.
Clearly $M$ is stable for the action of $\W_{\xi,\xi}$ on $\tilde{T}^P$,
making $M$ a module over $\W_{\xi,\xi}$. To describe the module structure
explicitly, remark that the element $\tilde\kappa$ is not
$W_0(B_{\mu_1})\times\dots\times W_0(B_{\mu_1})$-invariant.
In fact, if $w=w^{(1)}\times\dots\times w^{(r)}\in
W_0(B_{\mu_1})\times\dots\times W_0(B_{\mu_1})$ then
\begin{equation}
w(\tilde\kappa)=\eta(w)\tilde\kappa
\end{equation}
where
\begin{equation}
\eta(w)=\prod_{i=1}^r(\eta^{(i)})^{\sigma_i}\in\langle\eta^{(1)},\dots,\eta^{(r)}\rangle
\end{equation}
with $(-1)^{\sigma_i}={\Sigma_{i}}(w^{(i)})$, where $\Sigma_i$ is the character
on $W_0(\textup{B}_{\mu_i})$ whose kernel is $W_0(\textup{D}_{\mu_i})$. The elements
$\eta^{(i)}$ are all fixed for the action of $\W_{\xi,\xi}$.

The $M$-orbit $S_t$ is stable for the action
of $\W_{\xi,\xi}$ on $\tilde{T}^P$ as well. Hence, any lift $\tilde t\in S_t$ of $t$
defines a $1$-cocycle $\mu:\W_{\xi,\xi}\to M$ of $\W_{\xi,\xi}$ with values in $M$
by the formula $w\tilde t=\mu(w)\tilde t$.
We fix such a lift $\tilde t$ once and for all.
Consider the abelian group $N:=\langle\eta,\tilde\kappa\rangle$ defined by the relations
$\eta^2=1$ and $\tilde\kappa^2=\eta$ (if the number of odd $i$ with $\mu_i$ odd is odd)
or else $\tilde\kappa^2=1$ (hence $N$ is isomorphic to $C_4$ if there are an odd number
of odd $i$ with odd $\mu_i$, and $N$ is isomorphic to $C_2\times C_2$ else).
Consider the $\W_{\xi,\xi}$-module structure on $N$ defined by
$w(\eta)=\eta$ for all $w\in\W_{\xi,\xi}$, and by
\begin{equation}\label{eq:actN}
w(\tilde\kappa)=\eta^{\sigma(w)}\tilde\kappa
\end{equation}
(the fact that
this defines a $\W_{\xi,\xi}$-module is equivalent to saying that $\sigma$ is a character of $\W_{\xi,\xi}$).
The module $N$ is a quotient of $M$ via the unique homomorphism $\alpha:M\to N$ satisfying
$\alpha(\tilde\kappa)=\tilde\kappa$, $\alpha(\eta^{(i)})=\eta$ if $i$ is odd, and
$\alpha(\eta^{(i)})=1$ if $i$ is even (in fact, the definition of $N$ is
such that $\alpha$ exists). The cocycle $\mu$ induces a cocycle of $\W_{\xi,\xi}$
with values in $N$ which we denote by $\mu_N$.
Consider the following diagram
\begin{equation}\label{eq:CD}
\xymatrix{
&&N \ar@{_{(}->}[d]_i
&\W_{\xi,\xi}\ar[d]^{\pi^D_{\xi,\xi}}\ar@{.>}[ld]_\chi\\
&\langle\eta\rangle\ar@{^{(}->}[r]
&\mathbb{D}_8\ar@{->>}[r]_j\ar@{->>}[d]
&\W^D_{P,P}\ar@{=}[r]&\langle\omega\rangle\times\langle\kappa\rangle\\
&&\langle\omega\rangle\ar@/_/[u]
}\end{equation}
where $\mathbb{D}_8=\langle\tilde\omega,\tilde\kappa\rangle$ is a dihedral group
of order $8$ in which $\tilde\omega$ has order two and $\eta$
(or more precisely $i(\eta)$) is the nontrivial central element.
The defining relations in $\mathbb{D}_8$ are given by
$\tilde\omega\tilde\kappa\tilde\omega=\eta\tilde\kappa$.
The map $j$ is defined by requesting that $j(\tilde\omega)=\omega$ and
$j(\tilde\kappa)=\kappa$.

We claim that there exists a homomorphism $\chi:\W_{\xi,\xi}\to \mathbb{D}_8$
as indicated in the diagram. Obviously the vertical
exact sequence is split. We choose the splitting $\omega\to\tilde\omega$ of
this sequence. The homomorphism $\pi^D_{\xi,\xi}$ gives rise to
a homomorphism $\pi:\W_{\xi,\xi}\to \mathbb{D}_8$ with image
$\langle\tilde\omega\rangle$ obtained by composing $\pi^D_{\xi,\xi}$ with
the first projection $\langle\omega,\kappa\rangle\to\langle\omega\rangle$
and the lift $\omega\to\tilde\omega$. We can write this explicitly by
$\pi(w)=\tilde\omega^{\sigma(w)}$.
We now define a map $\chi:\W_{\xi,\xi}\to \mathbb{D}_8$ by
\begin{equation}\label{eq:hom}
\chi(w)=i(\mu_N(w))\pi(w).
\end{equation}
We claim that $\chi$ is a group homomorphism. Indeed, the action of $\W_{\xi,\xi}$
on $N$ is related to $\pi$ by the formula
$i(n^w)=\pi(w)i(n)\pi(w)^{-1}$ (using the explicit formulas for the module
$N$ and for $\pi$). It follows that (\ref{eq:hom}) indeed defines a homomorphism.
Next we claim that $\chi$ makes (\ref{eq:CD}) commutative as indicated. Indeed,
$i(\mu_N(w))\equiv \tilde\kappa^{\epsilon(w)}\textup{mod}\langle\eta\rangle$ as
follows from the definition of $\mu$ (and $\mu_N$) and of
$\epsilon$. On the other hand, by construction of $\pi$ we see that
$\pi(w)\equiv\tilde\omega^{\sigma(w)}\textup{mod}\langle\eta\rangle$. Together
these two congruences (modulo the center $\langle\eta\rangle$ of $\mathbb{D}_8$)
imply the claim.

Since $\mathbb{D}_8$ is the Schur extension of $\langle\omega,\kappa\rangle$
it now follows that
the pullback of $\ga_\delta$ under the homomorphism $\pi^D_{\xi,\xi}$
is indeed a coboundary (since $\pi^D_{\xi,\xi}$ factors through the
Schur extension map),
finishing the proof of the claim and of the Theorem.
\end{proof}
\subsection{Final remarks}
\subsubsection{Multiplicity one $W_0$-types}
We would like to comment on a natural alternative approach to proving the
triviality of the cocycles $\ga_\xi$ for $\xi=(P,\d,t)\in\Xi_u$.
The equivalence
class of the restriction to $\H(W_0,q_0)$ of $\pi(\xi)$ is independent of the
continuous parameter $t\in T^P$. We will refer to an irreducible
representation of $\H(W_0,q_0)$ as a ``$W_0$-type'' in this paragraph.
If there exists a $W_0$-type appearing in $(V_\xi,\pi(\xi))$ with multiplicity one then we can
normalize the action of $\mathfrak{R}_\xi$ on $V_\xi$ such that the operators
$\pi(\mathfrak{r},\xi)$ are equal to $1$ on this multiplicity one isotype, and
this trivializes the cocycle $\ga_\xi$.
\begin{prop}
Let $\H=\H(\Ri,q)$ be an affine Hecke algebra, and let $\xi=(P,\d,t)\in\Xi_u$
be a standard tempered induction datum such that the central character of $\d$ is positive
(i.e. infinitesimally real). Then $\ga_\xi$ is trivial.
\end{prop}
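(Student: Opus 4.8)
The plan is to reduce the triviality of $\ga_\xi$ to the existence of a single irreducible representation of the finite Hecke algebra $\H(W_0,q_0)$ (a ``$W_0$-type'') occurring with multiplicity one in $\pi(\xi)$, and then to produce such a type from the structure theory of discrete series with real central character. The reduction is formal: suppose $E$ is a $W_0$-type with $\dim\operatorname{Hom}_{\H(W_0,q_0)}(E,\pi(\xi))=1$. For every $g\in\W_{\xi,\xi}$ the normalized intertwiner $\pi(g,\xi)$ is an $\H$-endomorphism of $\pi(\xi)$ (Theorem \ref{thm:indint}), hence commutes with the action of $\H(W_0,q_0)$ and therefore acts on the one-dimensional space $\operatorname{Hom}_{\H(W_0,q_0)}(E,\pi(\xi))$ by a nonzero scalar $c_g$. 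Evaluating (\ref{eq:groupoidcoc}) for $g,g'\in\W_{\xi,\xi}$ on this line gives $c_g\,c_{g'}=\ga_\xi(g,g')\,c_{gg'}$, so $\ga_\xi$ is the coboundary of $g\mapsto c_g$ and $[\ga_\xi]=1$.

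To find the multiplicity-one $W_0$-type I would first pass to the finite Hecke algebra of $W_P$. It is a standard fact (see \cite{O}) that the $\H(W_0,q_0)$-character of $\pi(\xi)=i(\d_t)$ does not depend on $t\in T^P$ and equals the character of $\operatorname{Ind}_{\H(W_P,q_0)}^{\H(W_0,q_0)}(\d|_{\H(W_P,q_0)})$, where $\H(W_P,q_0)$ denotes the finite Hecke subalgebra of $\H^P$, which the quotient map carries isomorphically onto the finite Hecke subalgebra of $\H_P$, so that $\d|_{\H(W_P,q_0)}$ is well defined. By Frobenius reciprocity for finite Hecke algebras the multiplicity of a $W_0$-type $E$ in $\pi(\xi)$ equals $\langle E|_{\H(W_P,q_0)},\d|_{\H(W_P,q_0)}\rangle$, so it suffices to analyse the $W_P$-structure of the single discrete series representation $\d$ of $\H_P$, independently of $t$.

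The hypothesis that the central character of $\d$ is positive is used precisely here: it places $\d$ within the family of discrete series of $\H_P$ with real central character, for which there is a canonical ``lowest $W_P$-type''. Concretely, the constituent $E_0$ of $\d|_{\H(W_P,q_0)}$ minimizing Lusztig's $\mathbf{a}$-function occurs with multiplicity one and has $\mathbf{a}$-value strictly smaller than that of every other constituent; moreover truncated ($\mathbf{j}$-)induction is defined on $E_0$, so that $E:=\mathbf{j}_{W_P}^{W_0}(E_0)$ is an irreducible $W_0$-type appearing in $\operatorname{Ind}_{W_P}^{W_0}(E_0)$ with multiplicity one and with $\mathbf{a}_{W_0}(E)=\mathbf{a}_{W_P}(E_0)$. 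For equal parameters $E_0$ is the Springer representation of the distinguished unipotent class attached to $\d$, and these are classical facts; for arbitrary positive parameters they should be read off from Lusztig's first reduction theorem (reducing $\H_P$ at the central character to a graded affine Hecke algebra with real central character) together with the classification of discrete series in \cite{OpSo}. Granting this, I would finish as follows: any constituent $E'\neq E_0$ of $\d|_{\H(W_P,q_0)}$ satisfies $\mathbf{a}_{W_P}(E')>\mathbf{a}_{W_P}(E_0)$, and every $W_0$-type appearing in $\operatorname{Ind}_{W_P}^{W_0}(E')$ has $\mathbf{a}_{W_0}\ge\mathbf{a}_{W_P}(E')>\mathbf{a}_{W_0}(E)$; hence $E$ does not occur in $\operatorname{Ind}_{W_P}^{W_0}(E')$. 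Summing over all constituents of $\d|_{\H(W_P,q_0)}$ shows that $E$ occurs in $\pi(\xi)$ with multiplicity exactly one, which is what was needed.

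The main obstacle is the structural input in the previous paragraph: establishing, for a discrete series representation of $\H_P$ with real central character and an arbitrary positive parameter function, the existence of a multiplicity-one, $\mathbf{a}$-minimal lowest $W_P$-type $E_0$ which is $\mathbf{j}$-inducible (so that $\mathbf{j}_{W_P}^{W_0}(E_0)$ is a single irreducible occurring with multiplicity one in $\operatorname{Ind}_{W_P}^{W_0}(E_0)$) and separated in $\mathbf{a}$-value from the remaining constituents of $\d|_{\H(W_P,q_0)}$. For equal parameters this is Kazhdan--Lusztig/Springer theory; in general it requires the machinery of \cite{OpSo}. Once this is granted, the rest of the argument is routine.
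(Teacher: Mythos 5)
Your first paragraph (the reduction of the triviality of $\ga_\xi$ to the existence of a multiplicity-one $W_0$-type, using the $t$-independence of $\pi(\xi)|_{\H(W_0,q_0)}$ and evaluating (\ref{eq:groupoidcoc}) on the one-dimensional multiplicity space) is exactly the reduction used in the paper, and it is correct. The problem is everything after that: the existence of the multiplicity-one $W_0$-type is the entire content of the proposition, and your proposed construction of it leaves the decisive input unproved. Specifically, you need that every discrete series representation of $\H_P$ with real central character and \emph{arbitrary} positive parameters possesses a lowest $W_P$-type $E_0$ that (a) occurs with multiplicity one, (b) has strictly smaller $\mathbf{a}$-value than every other constituent of $\d|_{\H(W_P,q_0)}$, and (c) is $\mathbf{j}$-inducible, i.e.\ $\operatorname{Ind}_{W_P}^{W_0}(E_0)$ has a unique $\mathbf{a}$-minimal constituent occurring with multiplicity one. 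None of (a)--(c) is a formal consequence of being a discrete series; for unequal parameters this is a genuinely hard, case-by-case statement (and $\mathbf{j}$-induction applied to an arbitrary irreducible need not produce a single irreducible — that requires $E_0$ to be ``good'' in the appropriate sense). You acknowledge this as ``the main obstacle,'' but since it is precisely where the hypothesis on the central character does its work, the proof as written is incomplete.

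It is worth contrasting this with how the paper discharges the same burden. Rather than analyzing the inducing module $\d$ and pushing a lowest type through induction, the paper deforms the continuous parameter: for $t\in T^P$ positive and sufficiently generic, $\pi(P,\d,t)$ is \emph{irreducible} with positive central character, so one can invoke directly the theorem (due to Ciubotaru, \cite[Introduction, paragraph 1.3]{Chu}) that any irreducible $\H$-module with positive central character admits a multiplicity-one $W_0$-type; since the restriction to $\H(W_0,q_0)$ is independent of $t$, the same $W_0$-type has multiplicity one for all $t$, in particular for unitary $t$. This sidesteps both the lowest-$W_P$-type analysis and the compatibility of the $\mathbf{a}$-function with induction, at the price of citing a case-by-case verified external result. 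Your route could in principle be completed by establishing (a)--(c) from the classification in \cite{OpSo} (and would then reprove a piece of Ciubotaru's theorem), but as it stands the argument is a reduction of one nontrivial statement to another of comparable depth, not a proof.
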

\begin{proof}
In view of the above argument, it is sufficient to prove the existence
of a multiplicity one $W_0$-type in $\pi(\xi)$.
We thank Dan Ciubotaru for communicating to us that
it can be shown that any irreducible representation of $\H$
\emph{with positive central character}  admits a multiplicity
one $W_0$-type (see \cite[Introduction, paragraph 1.3]{Chu}).
The proof of this fact is based on case-by-case verifications.
Now consider $\xi=(P,\d,t)$ with $\d$ a discrete series with real central character.
If $t\in T^P$ is positive and sufficiently generic, $\pi(\xi)$ is irreducible
and has positive central character. By Ciubotaru's result mentioned above, this
implies the existence of a multiplicity one $W_0$-type. As explained above,
it follows that $\pi(\xi)$ has a multiplicity one $W_0$-type for all $t\in T^P$,
hence in particular for all $t\in T^P_u$ as desired.
\end{proof}
We do not know how to generalize this argument to general $\d$.
\subsubsection{Examples where $\ga_\Delta$ is nontrivial}\label{subsub:nontriv}
In this subsection we present an example showing
that $\ga_\Delta$ is not trivial for $\H_n^D(\Lambda_{r,n})$ if $n>8$.
In the notation of the proof of Proposition \ref{prop:DQ},
we write $l=n-1$ if $n$ is odd, and $l=n-2$ is $n$ is even, and put $l=2m$
in both cases. We define $\lambda=(1)$ or $\lambda=(1,1)$ depending on
$n$ being odd or even. Recall that conjugate partition of $\lambda$
is denoted by $\mu$; hence we have $\mu=(1)$ in the first case
and $\mu=(2)$ in the second case.
By (\ref{eq:Wf}), (\ref{eq:wpp}), (\ref{eq:KPA}) and (\ref{eq:KPD})
we see that
\begin{equation}\label{eq:wppD}
\W_{P,P}=(W_0(B_{\mu_1})\times\langle\kappa\rangle\times\langle
\omega\rangle)^{\Sigma_{odd}}\subset \W_{P,P}^A\times\W_{P,P}^D
\approx W_0(B_{\mu_1})\times C_2^2
\end{equation}
where $\omega\in\Wf_{P,P}^D$ and $\kappa\in K^D_P$ are as in the proof of
Proposition \ref{prop:DQ}, and
where $\Sigma_{odd}$ is the linear character which is equal to the product
of the signs of a signed permutation in $W_0(B_{\mu_1})$, which is trivial
in $\langle \kappa\rangle$, and which is nontrivial on $\langle \omega\rangle$.
In particular the homomorphism $\pi^D:\W_{P,P}\to\W_{P,P}^D$ of (\ref{eq:piD})
is surjective (even has a section), with
\begin{equation}
\W_{P,P}^D=\langle \kappa\rangle\times\langle\omega\rangle\approx C_2^2
\end{equation}
Equation (\ref{eq:HPtensor})
reduces in this case to
\begin{equation}
\H_P=\H_{2m}^D(\mathbb{Z}^{2m})
\end{equation}
and the action of $\W_{P,P}$ by automorphisms of $\H_P$ factors through
the surjective projection $\pi^D$ to an action (denoted by $\beta$) of $\W_{P,P}^D$
on $\H_P$ by automorphisms.

The spectral diagram
(in the sense of \cite[Definition 8.1]{OpSo}) of $\H_P$ is the affine Dynkin diagram
of type $D_{2m}$ equipped with the action of the unique nontrivial diagram automorphism
$\eta$ whose set of fixed points is the set of non-extremal vertices of the
diagram. In Figure \ref{D2n} this spectral diagram is displayed, with the action
of $\eta$ indicated by the solid arrows. In addition we have indicated in Figure \ref{D2n}
the middle vertex $e$ (the encircled vertex) of the diagram, and the action of three diagram
automorphisms $\tilde\kappa$, $\tilde\omega$ and $\tilde\omega^\prime$ (by the dashed arrows).
The group $G$ of diagram automorphisms generated by $\tilde\kappa$ and $\tilde\omega$ is
isomorphic to the dihedral group $\mathbb{D}_8$, and we have a projection $q:G\to\W_{P,P}^D$
with kernel $\langle\eta\rangle$ (the center of $G$). Observe
that $\eta=\tilde\omega^\prime\tilde\omega=\tilde\kappa\tilde\omega\tilde\kappa\tilde\omega$
in $G$.
\input{D2n.TpX}
Recall that $T_P=\operatorname{Hom}(\mathbb{Z}^{2m},\mathbb{C}^\times)=
(\mathbb{C}^\times)^{2m}$, and denote by $\tilde{T}_P$ the double cover
$\tilde{T}_P:=\operatorname{Hom}(\Lambda_{2m}^D,\mathbb{C}^\times)$ of $T_P$.
There is a canonical identification of the group of special diagram automorphisms
$\langle\tilde\kappa\rangle\times\langle\eta\rangle$ with the group
$\tilde{K}_P:=(\Lambda_{r,2m}^D)^\vee/(\Lambda_{2m}^D)^\vee\approx C_2^2\subset\tilde{T}^P$
of fixed points for natural action of $W_0(D_{2m})$ on $\tilde{T}_P$.
We can extend the action of $W_0(D_{2m})$ on $\tilde{T}_P$ to
$W_0(B_{2m})=W_0(D_{2m})\rtimes\langle\tilde\omega\rangle$.
Then $\eta$ is the unique nontrivial fixed point for
this action of $W_0(B_{2m})$, while $\tilde\omega(\tilde\kappa)=\eta\tilde\kappa$.
We have $T_P=\tilde{T}_P/\langle\eta\rangle$, and the action of $W_0(B_{2m})$ on $T_P$
admits a unique nontrivial fixed point
$\kappa=(-1,\dots,-1)=\tilde\kappa\langle\eta\rangle$.
The group $G=\tilde{K}_P\rtimes\langle\tilde\omega\rangle$ acts naturally on
$\H^D_{2m}(\Lambda_{2m})$ via an action $\tilde\beta$ defined as follows:
$\tilde\beta(\tilde\omega)$ is the diagram automorphism arising from the automorphism
of the root datum of $\H^D_{2m}(\Lambda_{2m})$ which $\tilde\omega$ induces,
and $k\in \tilde{K}_P$ acts on the Bernstein basis of $\H^D_{2m}(\Lambda_{2m})$ by
$\tilde{\beta}(k)(\theta_xN_w)=k(x)\theta_xN_w$.
Since $\eta$ is central in $G$, we see that $\H_P=(\H^D_{2m}(\Lambda_{2m}))^\eta$
is stable for the action of $G$ via $\tilde\beta$. If we restrict the action of
$\tilde\beta$ to the subalgebra $\H_P$, this restricted action descends to
an action of $\W_{P,P}^D$ on $\H_P$ which coincides with
the action $\beta$ on $\H_P$ defined above.

Recall Lusztig's parameterization \cite{Lu},\cite[Theorem 8.7]{OpSo} of the discrete series
representations of $\H_P=\H_{2m}^D(\mathbb{Z}^{2m})$. According to this result
the discrete series representations of $\H_P$ whose central character
$W_0(D_{2m})r\subset \operatorname{Hom}(\mathbb{Z}^{2m},\mathbb{C}^\times)$ contains points
with unitary part equal to $s(e)=(1,\dots,1,-1,\dots,-1)\in T_P$
(with the same number $m$ of $1$'s and $-1$'s) correspond to discrete series
representations of the extended graded affine Hecke algebra of the form
\begin{equation}\label{eq:He}
\mathbf{H}_e:=(\mathbf{H}(D_m)\otimes\mathbf{H}(D_m))\rtimes\langle\eta\rangle
\end{equation}
where $\mathbf{H}(D_m)$ is shorthand for $\mathbf{H}(R_1,V,F_1,k)$
(in the notation of \cite{OpSo}), where $R_1$ is a root system of type $D_m$ in
$V^*$, with basis of simple roots $F_1$.
The underlying based root system of the Hecke algebra $\mathbf{H}_e$ is
the root system $R_{s(e),1}$ of type $D_m\times D_m$ obtained from the spectral
diagram by deleting the vertex $e$. The group $G=\mathbb{D}_8$ of diagram automorphisms fixes $e$;
this yields an action (denoted by $\alpha$) of $G$ on the algebra $\mathbf{H}_e$
by diagram automorphisms. Observe that $\alpha(\eta)$ is inner on $\mathbf{H}_e$, hence
$\alpha$ gives rise to a homomorphism of $G$ to the group of outer automorphisms
of $\mathbf{H}_e$ which factors through $\W_{P,P}^D$ via $q$.

To explain the above mentioned correspondence between the discrete series on
both sides, recall from \cite[proof of Theorem 7.1]{OpSo} that
every discrete series representation $\delta$ of $\mathbf{H}(D_m)$ is fixed for twisting
with the action of the unique nontrivial diagram automorphism $\tilde\omega$ of $D_m$, and can
in fact be extended to
$\mathbf{H}(B_m)=\mathbf{H}(D_m)\rtimes\langle\tilde\omega\rangle$ in precisely two ways,
denoted by $\delta_+$ and $\delta_-$.
In particular, the central character $W_0(D_m)\xi$ of
$\delta$ is fixed for the action of $\tilde\omega$.
If $\delta_+(\tilde\omega)=\Omega\in\operatorname{GL}(V_\delta)$ then
$\delta_-(\tilde\omega)=-\Omega$.
A discrete series representation of $\mathbf{H}_e$ is therefore of the form
$\delta_{1,\epsilon_1}\otimes\delta_{2,\epsilon_2}$ (with $\epsilon_i=\pm$, and
where $\delta_i$ is a discrete series representations of $\mathbf{H}(D_m)$) and
has a central character of the form
$c_V:=((W_0(D_m)\times W_0(D_m))\rtimes\langle\eta\rangle)(\xi_1,\xi_2)=
(W_0(D_m)(\xi_1), W_0(D_m)(\xi_2))$. Notice that the identity map defines an
equivalence of $\mathbf{H}_e$-representations
\begin{equation}
\delta_{1,\epsilon_1}\otimes\delta_{2,\epsilon_2}\isom
\delta_{1,-\epsilon_1}\otimes\delta_{2,-\epsilon_2}
\end{equation}
This representation corresponds (in the sense of \cite[Theorem 8.7]{OpSo})
to a discrete series representation
$\sigma$ of $\H_P$ with central character
$cc:=W_0(D_{2m})(r_\sigma)$ with $r_\sigma:=s(e)\exp{(\xi_1,\xi_2)}$.
We note that $c:=s(e)\exp{c_V}=s(e)(W_0(D_m)(\exp(\xi_1)), W_0(D_m)(\exp(\xi_2)))
\subset cc$ is an equivalence class in the sense of \cite[paragraph 8.1]{Lu}.
The correspondence discussed above is completely determined, using Lusztig's isomorphism
\cite[Section 8, Section 9]{Lu}
\begin{equation}\label{eq:lusisombas}
\Phi:e_c(\bar{\H}_{P,cc})e_c\isom\bar{\mathbf{H}}_{e,c_V},
\end{equation}
by the requirement that the action of $\mathbf{H}_e$ on
$\sigma(e_c)(V_\sigma)\subset V_\sigma$ via $\Phi$ is equivalent to the
representation $\delta_{1,\epsilon_1}\otimes\delta_{2,\epsilon_2}$. It is clear
by the above that these representations of $\mathbf{H}_e$ are invariant for
twisting by $\tilde\omega$, and that they are invariant for $\tilde\kappa$ if and only if
$\delta_1=\delta_2:=\delta$ (a discrete series representation of
$\mathbf{H}(D_m)$). Write $\sigma(\delta)$ for the discrete series
representation of $\H_P$ corresponding to $\delta_+\otimes\delta_+$ and
$\bar\sigma(\delta)$ for the one corresponding to
$\delta_+\otimes\delta_-$.
\begin{prop}
The discrete series representations of $\H_P$ of the form $\bar\sigma(\delta)$
are invariant for twisting by the group
$\W_{P,P}^D=\langle \kappa\rangle\times\langle\omega\rangle\approx C_2^2$ of
automorphisms of $\H_P$, and the corresponding factor set $\ga$
(see \cite[8.32]{CR}) is a nontrivial cocycle of $\W_{P,P}^D$.
\end{prop}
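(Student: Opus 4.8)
\emph{Overview.} The plan is to prove invariance inside Lusztig's reduced model for $\H_P$, and then to detect the class of $\ga$ through the sign with which the central element of $\mathbb{D}_8$ must act in any lift of the resulting projective representation. I keep the notation of the proof of Proposition \ref{prop:DQ} and of the paragraph preceding the statement: $\mathbb{D}_8=\langle\tilde\omega,\tilde\kappa\rangle$ is the diagram-automorphism group with central element $\eta$, the group $\W_{P,P}^D\cong\langle\kappa\rangle\times\langle\omega\rangle$ is its quotient by $\langle\eta\rangle$ via $q$, and $w_0$ denotes the order-two diagram automorphism of $D_m$ with $\delta_\pm(w_0)=\pm\Omega$ (so $\Omega^2=\operatorname{Id}$). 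Recall that inside $\mathbf{H}(B_m)\otimes\mathbf{H}(B_m)$ the element $\eta$ is $w_0\otimes w_0$, and $\mathbf{H}_e=(\mathbf{H}(D_m)\otimes\mathbf{H}(D_m))\cdot\langle\eta\rangle$ has index two.

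\emph{Step 1: Invariance.} First I would observe that the $\mathbf{H}_e$-module attached to $\bar\sigma(\delta)$ under Lusztig's isomorphism $\Phi$ of \eqref{eq:lusisombas} is, as a module for $\mathbf{H}(D_m)\otimes\mathbf{H}(D_m)$, the exterior tensor square $\delta\boxtimes\delta$ of a single discrete series $\delta$. It is therefore invariant under twisting by the leg-swap $\tilde\kappa$, an isomorphism realised by the flip operator $\tau$, and, since $\delta$ is $w_0$-fixed, invariant under the leg-automorphism $\tilde\omega$; as $\mathbb{D}_8=\langle\tilde\omega,\tilde\kappa\rangle$ and $q\colon\mathbb{D}_8\twoheadrightarrow\W_{P,P}^D$ is surjective, the module is invariant under the image in $\W_{P,P}^D$ of the diagram-automorphism action $\alpha$. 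Transporting through $\Phi$, and using that $\bar\sigma(\delta)$ is recovered as an $\H_P$-module from its $e_c$-isotypic component, I would conclude that $\bar\sigma(\delta)$ is $\W_{P,P}^D$-invariant; one still checks directly, from the explicit form of $r_\sigma$, that $\omega$ and $\kappa$ fix the orbit $cc=W_0(D_{2m})r_\sigma$.

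\emph{Step 2: Reduction to a sign.} Since $\W_{P,P}^D\cong C_2\times C_2$, a factor set of it is a coboundary exactly when its two generators lift to commuting operators, and is cohomologous to the nontrivial class of $H^2(C_2\times C_2,\mathbb{C}^\times)\cong C_2$ exactly when they lift to anticommuting operators. Equivalently, using that $\mathbb{D}_8$ is a Schur cover of $C_2\times C_2$ (cf. the proof of Lemma \ref{lem:maxext}, based on \cite{IY} and \cite{R}), the projective representation of $\W_{P,P}^D$ on $V_{\bar\sigma(\delta)}$ always lifts to a genuine representation $B$ of $\mathbb{D}_8$, and $[\ga]=1$ if and only if $B_\eta=\operatorname{Id}$. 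Here $B_\eta$ is a scalar, because $q(\eta)=e$ acts trivially on $\H_P$ and $\bar\sigma(\delta)$ is irreducible, and it is independent of the lift $B$, because $\eta$ lies in the commutator subgroup of $\mathbb{D}_8$ and hence every character of $\mathbb{D}_8$ is trivial on $\eta$. Thus it suffices to prove $B_\eta=-\operatorname{Id}$ for $\bar\sigma(\delta)$ (while the same scheme should give $B_\eta=\operatorname{Id}$, hence $[\ga_{(P,\sigma(\delta))}]=1$, for $\sigma(\delta)$).

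\emph{Step 3: The sign computation, and the main obstacle.} I would then trace $B_\eta$ into the reduced model. On the $e_c$-component the diagram automorphisms $\alpha(\tilde\kappa)$ and $\alpha(\tilde\omega)$ are intertwined on $\delta\boxtimes\delta$ by $\tau$ and by $\Omega\otimes1$ respectively, each of square $\operatorname{Id}$, and $\tau(\Omega\otimes1)\tau^{-1}=1\otimes\Omega$; comparing with the relation $\tilde\kappa\tilde\omega\tilde\kappa^{-1}=\eta\tilde\omega$ in $\mathbb{D}_8$ (which follows from $\tilde\omega\tilde\kappa\tilde\omega=\eta\tilde\kappa$ and $\tilde\omega^2=\tilde\kappa^2=e$) forces the lift to send $\eta$ on this component to the operator $(1\otimes\Omega)(\Omega\otimes1)^{-1}=\Omega\otimes\Omega$. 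The decisive point is that on the $\bar\sigma(\delta)$-module the \emph{algebra} element $\eta=w_0\otimes w_0\in\mathbf{H}_e$ acts by $\delta_+(w_0)\otimes\delta_-(w_0)=-\,\Omega\otimes\Omega$, whereas on $\sigma(\delta)$ it acts by $+\,\Omega\otimes\Omega$: this opposite sign is the only difference between the two modules, so $B_\eta$ is $-1$ times the module action of $\eta$ for $\bar\sigma(\delta)$ and $+1$ times it for $\sigma(\delta)$. Finally one folds in the inner automorphism of $\mathbf{H}_e$ by which the transported base-point-change action $\Phi\circ\beta_g\circ\Phi^{-1}$ differs from $\alpha$ --- controlled by Lusztig's explicit description of the first reduction via the intertwiners $\iota^0_w$, which replaces the module action of $\eta$ by the scalar $\operatorname{Id}$ --- so that the residual sign survives: $B_\eta=-\operatorname{Id}$ for $\bar\sigma(\delta)$, and $[\ga]\neq 1$. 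The hard part will be exactly this last bookkeeping: pinning down the inner correction and the normalisations of the intertwiners, using the structure of the type-$D_m$ discrete series from \cite[Theorem 7.1, Theorem 8.7]{OpSo}, so that the residual scalar is unambiguously $-1$ rather than $+1$.
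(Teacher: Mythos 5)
Your overall strategy is the paper's: transport the twisting action through Lusztig's isomorphism, lift the resulting projective action of $\W_{P,P}^D$ to $\mathbb{D}_8$, and read off the class of $\ga$ from the sign by which $\eta$ acts. Your Step~1 (invariance, via the $\tilde\kappa$- and $\tilde\omega$-invariance of $\delta_+\otimes\delta_-$) and Step~2 (the reduction to computing $B_\eta=\pm\operatorname{Id}$, using that $\eta$ lies in the commutator subgroup of $\mathbb{D}_8$) are both sound and agree with the paper.

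The gap is in Step~3, and it is exactly the step you flag as ``the hard part.'' The equivalence class $c=s(e)\exp c_V$ is $\omega$-invariant but \emph{not} $\kappa$-invariant ($\kappa$ sends $s(e)=(1,\dots,1,-1,\dots,-1)$ to a different point $wc$ of the orbit), so $\beta(\kappa)$ does not preserve the corner algebra $e_c\bar{\H}_{P,cc}e_c$ and the expression $\Phi\circ\beta_\kappa\circ\Phi^{-1}$ you invoke is not defined. Consequently the single-block operators $\tau$ and $\Omega\otimes 1$ do not implement the twists of the $\H_P$-module $\bar\sigma(\delta)$; indeed, on one block their alternating product squares to $\Omega\otimes\Omega$, which is not a scalar, so they cannot be the intertwiners whose factor set you are computing. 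Your patch --- comparing with the algebra action $\delta_+(w_0)\otimes\delta_-(w_0)=-\Omega\otimes\Omega$ of $\eta$ and declaring the residual ratio to be $-1$ --- lands on the correct sign, but it is a heuristic, not a proof: the ``inner correction'' you defer is precisely where the argument lives. The paper resolves this by passing to the $\W_{P,P}^D$-invariant idempotent $e_c+e_{wc}$, identifying $(e_c+e_{wc})\bar{\H}_{P,cc}(e_c+e_{wc})$ with $\operatorname{Mat}_{2\times 2}(\bar{\mathbf{H}}_{e,c_V})$ via the intertwiners $\iota^0_w$, computing the transported action ($\mu(\omega)$ is conjugation by $\operatorname{diag}(1,\eta)$ composed with $I\otimes\alpha(\tilde\omega)$, $\mu(\kappa)$ is conjugation by the antidiagonal matrix composed with $I\otimes\alpha(\tilde\kappa)$), and then exhibiting explicit intertwiners $M(\kappa)$, $M(\omega)$ on $\mathbb{C}^2\otimes(V_\delta\otimes V_\delta)$ with $(M(\kappa)M(\omega))^2=-\operatorname{Id}$. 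To complete your proof you must carry out this two-block computation (or an equivalent rigorous substitute); as written, the decisive sign is asserted rather than derived.
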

\begin{proof}
The invariance of $\bar\sigma(\delta)$ was discussed above.
We trace the action $\beta$ of $\W_{P,P}^D$ on $\bar{\H}_{P,cc}$ through
Lusztig's isomorphism (\ref{eq:lusisombas}). But since the equivalence class
$c$ is $\omega$-invariant
but not $\kappa$ invariant we are forced to work with the $\W_{P,P}^D$-invariant
idempotent $e_c+e_{\kappa(c)}$ rather than $e_c$. Following Lusztig \cite{Lu}, we
choose $w\in W_0(D_{2m})$ such that $\kappa(c)=wc$ and such that $w$ has minimal
length (or equivalently, such that $w(F_{s(e),1})=F_{s(e),1}$). We have an algebra
isomorphism
\begin{align}\label{eq:lusisom}
\Psi:(e_c+e_{wc})\bar{\H}_{P,cc}(e_c+e_{wc})&\isom
\operatorname{Mat}_{2\times 2}(\bar{\mathbf{H}}_{e,c_V})\\
\nonumber X&\longrightarrow
\begin{pmatrix}
  \Phi(e_cXe_c) & \Phi(e_cXe_{wc}i^0_w) \\
  \Phi(i^0_{w^{-1}}e_{wc}Xe_c) & \Phi(i^0_{w^{-1}}e_{wc}Xe_{wc}i^0_w)
\end{pmatrix}
\end{align}
We transfer the action $\beta$ of $\W_{P,P}^D$ on $(e_c+e_{wc})\bar{\H}_{P,cc}(e_c+e_{wc})$
to the matrix algebra on the right hand side via $\Psi$; we shall denote the resulting
action of $\W_{P,P}^D$ by $\mu$.
We use the isomorphism $\operatorname{Mat}_{2\times 2}(\bar{\mathbf{H}}_{e,c_V})
=\operatorname{Mat}_{2\times 2}(\mathbb{C})\otimes\bar{\mathbf{H}}_{e,c_V}$, and
write $I$ for the identity automorphism of
$\operatorname{Mat}_{2\times 2}(\mathbb{C})$.
Using the results of \cite[Section 8]{Lu} it is not difficult to show that
\begin{equation}
\mu(\omega):=\Psi\circ\beta(\omega)\circ\Psi^{-1}=
C_{\begin{pmatrix}
  1 & 0 \\
  0 & \eta
\end{pmatrix}}\circ(I\otimes\alpha(\tilde\omega))
\end{equation}
where, for an invertible matrix $M$, $C_{M}$ denotes the inner automorphism
of conjugation with $M$. Similarly, we see that
\begin{equation}
\mu(\kappa):=\Psi\circ\beta(\kappa)\circ\Psi^{-1}=
C_{\begin{pmatrix}
  0 & 1 \\
  1 & 0
\end{pmatrix}}\circ(I\otimes\alpha(\tilde\kappa))
\end{equation}
Observe the relation $(\mu(\kappa)\mu(\omega))^2=\operatorname{Id}$
(use
$(\alpha(\tilde\kappa)\alpha(\tilde\omega))^2=\alpha(\eta)$, the inner automorphism
of conjugation by $\eta$ on $\mathbf{H}_{e,c_V}$), showing that we indeed
defined a representation of $\W_{P,P}^D$, and not just of $G$.
Recall that $\bar{\sigma}(\delta)$ has the defining property that
$\mathbf{H}_e$ acts via $\Phi$ on
$\bar{\sigma}(\delta)(e_c)(V_{\bar{\sigma}(\delta)})\approx V_\delta\otimes V_\delta$
according to $\delta_+\otimes\delta_-$. It follows that
$\operatorname{Mat}_{2\times 2}(\bar{\mathbf{H}}_{e,c_V})=
\operatorname{Mat}_{2\times 2}(\mathbb{C})\otimes\bar{\mathbf{H}}_{e,c_V}$
acts via $\Psi$ on $V_{c,wc}:=\bar{\sigma}(\delta)(e_c+e_{wc})(V_{\bar{\sigma}(\delta)})\approx
\mathbb{C}^2\otimes (V_\delta\otimes V_\delta)$ by
$\operatorname{id}\otimes(\delta_+\otimes\delta_-)$ (here $\operatorname{id}$ denotes
the defining action of $\operatorname{Mat}_{2\times 2}(\mathbb{C})$ on $\mathbb{C}^2$).
We write elements of $\mathbb{C}^2\otimes (V_\delta\otimes V_\delta)$
as a column vector of size $2$ with entries in $V_\delta\otimes V_\delta$,
so that the action of $\operatorname{Mat}_{2\times 2}(\bar{\mathbf{H}}_{e,c_V})$
can be written as matrix multiplication where the matrix entries act on $V_\delta\otimes V_\delta$
via $\delta_+\otimes\delta_-$.

It follows straight from the definition of the correspondence that the factor set $\ga$
for $\W_{P,P}^D$ defined the by module $\bar\sigma(\delta)$ via the action $\beta$
is equal to the factor set defined by the module $V_{c,wc}$ via the action $\mu$.
The following linear involutions $M(\kappa), M(\omega)$ on $V_{c,wc}$
define intertwining operators from
$V_{c,wc}$ to its twists by $\mu(\kappa)$ and $\mu(\omega)$
respectively:
\begin{equation}
M(\kappa)\begin{pmatrix}
  u_1\otimes u_2 \\ v_1\otimes v_2
\end{pmatrix}
=
\begin{pmatrix}
  v_2\otimes v_1 \\ u_2\otimes u_1
\end{pmatrix};\
M(\omega)\begin{pmatrix}
  u_1\otimes u_2 \\ v_1\otimes v_2
\end{pmatrix}
=
\begin{pmatrix}
  -u_1\otimes \Omega(u_2) \\ \Omega(v_1)\otimes v_2
\end{pmatrix}
\end{equation}
We see that $(M(\kappa)M(\omega))^2=-\operatorname{Id}$, proving
that $M$ lifts to a linear representation of the Schur extension $G=\mathbb{D}_8$
of $\W_{P,P}^D$ in which $\eta$ acts by $-\operatorname{Id}$. In particular,
$\ga$ is nontrivial.
\end{proof}
\begin{cor} The 2-cocycle $\ga_\Delta$ of $\W_\Delta$ for $\H_n^D(\Lambda_{r,n})$ is nontrivial if $n>8$.
\end{cor}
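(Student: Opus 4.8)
The plan is to read the statement off from Proposition \ref{prop:DQ} together with the (unlabelled) Proposition immediately preceding this Corollary. First I would recall that the hypothesis $n>8$ is precisely what forces $m\geq 4$, where $l=2m$ with $l=n-1$ for $n$ odd and $l=n-2$ for $n$ even; thus for $n>8$ the graded affine Hecke algebra $\mathbf{H}(D_m)$ is a genuine irreducible type $\textup{D}$ object and (for a suitable, e.g.\ generic, choice of $q$) admits a discrete series representation $\delta$. For such a $\delta$ the preceding Proposition produces a standard parabolic $P$ of type $\textup{A}_{\lambda_1-1}\times\cdots\times\textup{D}_l$ (with $\lambda=(1)$ or $(1,1)$, so that $\H_P=\H_{2m}^D(\mathbb{Z}^{2m})$ by \eqref{eq:HPtensor}) together with a discrete series representation $\bar\sigma(\delta)$ of $\H_P$ which is fixed by the group $\W_{P,P}^D\cong C_2^2$ of twisting automorphisms and whose associated factor set $\ga$ is a \emph{nontrivial} class in $H^2(\W_{P,P}^D,\mathbb{C}^\times)$.

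Next I would translate this into a statement about $\ga_{\W,\Delta}$. By the defining equation \eqref{eq:coc}, the restriction $\ga_{(P,\bar\sigma(\delta))}$ of $\ga_{\W,\Delta}$ to the isotropy group $I:=\W_{(P,\bar\sigma(\delta)),(P,\bar\sigma(\delta))}$ of the object $(P,\bar\sigma(\delta))$ in $\W_\Delta$ is exactly the factor set of the projective representation of $I$ obtained by twisting $\bar\sigma(\delta)$. In the case at hand the action of $\W_{P,P}$ on $\H_P$ factors through the surjection $\pi^D\colon\W_{P,P}\to\W_{P,P}^D$ of \eqref{eq:piD} (as recorded in the discussion preceding the last Proposition), and $\pi^D$ admits a section; since $\bar\sigma(\delta)$ is $\W_{P,P}^D$-invariant we get $I=\W_{P,P}$ and $\ga_{(P,\bar\sigma(\delta))}=(\pi^D)^{*}\ga$. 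Because $\pi^D$ is a split epimorphism of groups, pullback along it is injective on $H^2(-,\mathbb{C}^\times)$, so $[\ga_{(P,\bar\sigma(\delta))}]\neq 1$.

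Finally I would conclude: if $[\ga_\Delta]=[\ga_{\W,\Delta}]$ were trivial in $H^2(\W_\Delta,\operatorname{U}(1))$, then writing $\ga_{\W,\Delta}=\partial f$ and restricting the $1$-cochain $f$ to any isotropy group would show that the restriction of $\ga_{\W,\Delta}$ to that isotropy group is a coboundary; since we have exhibited an isotropy group on which it is nontrivial, $[\ga_\Delta]\neq 1$ for all $n>8$. This is exactly the content behind the exceptional case left open in Theorem \ref{thm:general}.

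The steps above are essentially bookkeeping once the preceding Proposition is granted; the only point needing a little care — and the one I would flag as the main (minor) obstacle — is the first one, namely verifying that $n>8$ is precisely the range in which the discrete series data feeding the Proposition exists: that $m\geq 4$, that $\mathbf{H}(D_m)$ has a discrete series $\delta$ for suitable parameters, and that the resulting $\bar\sigma(\delta)$ is genuinely distinct from $\sigma(\delta)$, so that the sign ambiguity $\delta_\pm$ in the extension to $\mathbf{H}(B_m)$ is really used.
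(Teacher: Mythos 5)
Your argument is correct and follows essentially the same route as the paper: restrict $\ga_\Delta$ to the isotropy group of $(P,\bar\sigma(\delta))$, identify this restriction with $(\pi^D)^*(\ga)$, use the section of $\pi^D$ to get injectivity of the pullback on $H^2(-,\mathbb{C}^\times)$, and invoke the preceding Proposition for the nontriviality of $\ga$. The point you flag about the range $n>8$ (i.e.\ $m\geq 4$ and the existence of the relevant discrete series $\delta$ of $\mathbf{H}(D_m)$) is indeed part of the setup in paragraph \ref{subsub:nontriv} rather than of the Corollary's proof itself, and your treatment of it is consistent with the paper.
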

\begin{proof} By definition (cf. paragraph \ref{subsub:2coc})
the pullback of $\ga_\Delta$ to
$\W_{P,P}\approx(\W_\Delta)_{(P,\bar\sigma(\delta)),(P,\bar\sigma(\delta))}$ is
equal to the pullback $(\pi^D)^*(\ga)$ of the factor set
$\ga$ via $\pi^D:\W_{P,P}\to\W_{P,P}^D$.
Using (\ref{eq:wppD}) it is easy to see that $\pi^D$ has a section
$s:\W_{P,P}^D\to\W_{P,P}$, implying that $(\pi^D)^*$ is injective
on cohomology classes by
contravariant functoriality of $H^2(?,\mathbb{C}^\times)$.
By the above Proposition
we conclude that $(\pi^D)^*(\ga)$ is a nontrivial $2$-cocycle, implying that
$\ga_\Delta$ is nontrivial.
\end{proof}
\section{Appendix: The Weyl groupoid}\label{app:weylgroupoid}
In this paragraph and in the next we recall some well known
facts about Weyl groups and standard parabolic subgroups of Weyl groups.
These results are
essentially due to Langlands, and the basic references for this
material are \cite{cas}, \cite{MW}.

Let $\mathfrak{a}=\operatorname{Lie}(T_\mathrm{rs})$ be the finite
dimensional real
vector space $\mathbb{R}\otimes_\Z Y$. Then $R_0\subset\mathfrak{a}^*$ is a
reduced, integral root system. Recall however that we do not assume
that $\mathbb{R}R_0=\mathfrak{a}^*$.

The Weyl group $W_0$ of $R_0$ acts naturally as a real reflection
group on $\mathfrak{a}$. The set of simple reflections in $W_0$ corresponding
to the basis of simple roots $F_0$ is denoted by $S_0$.

A parabolic subgroup of $W_0$ is the isotropy subgroup of an
element of $\mathfrak{a}$. A standard parabolic subgroup of $W_0$ is a
subgroup $W_P\subset W_0$ which is generated by the set of simple
reflections corresponding to a subset $P\subset F_0$. Clearly
every parabolic subgroup is conjugate to a standard parabolic
subgroup.

Let us denote by $\P$ the power set of $F_0$. Given $P,Q\in\P$ we
denote $\Wf_{P,Q}:=\{w\in W_0\mid w(P)=Q\}\subset W_0$.
\begin{dfn} The Weyl groupoid $\mathfrak{W}$ is the finite
groupoid whose set of objects is $\P$ and
$\operatorname{Hom}_\Wf(P,Q):=\Wf_{P,Q}$
\end{dfn}
For a standard parabolic subgroup $W_P$ we have a
distinguished set $W^P$ of left coset representatives for $W_P$,
characterized by $W^P:=\{w\in W_0\mid w(P)\subset R_{0,+}\}$.
We denote by $w_0$ the longest element of $W_0$, and by
$w_P$ the longest element of $W_P$. Then the longest
element in $W^P$ is equal to $w^P=w_0w_P$. Observe that
$\bar{P}:=w^P(P)\in\P$, so that we always have
$w^P\in\Wf_{P,\bar{P}}$. The element $\bar{P}\in\P$ is
called the conjugate of $P$.

If $P,Q\in\P$ and $\Wf_{P,Q}\not=\emptyset$ then $P,Q$ are called
associates. In particular, for every $P\in \P$ the conjugates
$P$ and $\bar{P}$ are associates.

Given $P\in\P$ we put $\mathfrak{a}^P=
\{x\in\mathfrak{a}\mid \a(x)=0\forall\a\in P\}$. Consider the
set $\mathfrak{S}:=\{(P,x)\mid P\in\P,\ x\in\mathfrak{a}^P\}$. Then
$\mathfrak{S}$ is a collection of real vector spaces which is
naturally fibred over $\P$. The set $\mathfrak{S}$ carries a
natural action of $\Wf$ defined by $w(P,x)=(Q,wx)$ if
$w\in\Wf_{P,Q}$.
\subsubsection{Chamber system of $\Wf$}\label{subsub:chsys}
We denote by $\mathfrak{a}^+$ the positive Weyl chamber in $\mathfrak{a}$.
Every face of $\overline{\mathfrak{a}^+}$ is of the from
$\overline{\mathfrak{a}^+}\cap\mathfrak{a}^P$
for a unique $P\in\P$, and this sets up natural bijection
between the facets of $\overline{\mathfrak{a}^+}$ and $\P$.

The subset of $R_0$ consisting of the roots of $W_P$ is denoted
by $R_P$, thus $R_P=R_0\cap \mathbb{R}P$. We choose the set of
positive roots $R_{P,+}$ in $R_P$ corresponding to the basis
$P$ of $R_P$.

We adopt the notation  $(P,\a)$ to denote the restriction of
$\a\in R_0\backslash R_P$ to $\mathfrak{a}^P$.
We write $R^P\subset\mathfrak{a}^{P,*}\backslash\{0\}$ for
the set of restrictions $(P,\a)$ of roots
$\a\in R_0\backslash R_P$ which are in addition primitive
in the sense that if $\b\in R_0\backslash R_P$ and
$(P,\a)\in R^P$ such that $(P,\a)$ and $(P,\b)$ are
proportional, then $(P,\b)=c(P,\a)$ with $c\in\mathbb{Z}$.
We write $R^P_+$ for the primitive restrictions corresponding
to the positive roots $\a\in R_{0,+}\backslash R_{P,+}$. An
element $(P,\a)$ is called {\it simple} if $(P,\a)$
is indecomposable in $\Z_+ R^P_+$. This is equivalent
to saying that $(P,\a)$ is the restriction of an element
of $F_0\backslash P$.

To each $(P,\a)\in R^P$ we associate a hyperplane
$(P,H_\a)=\operatorname{Ker}(P,\a)\subset\mathfrak{a}^P$. The hyperplanes
$(P,H_\a)$ are called the {\it walls} in $\mathfrak{a}^P$.

A chamber of $\Wf$ in $\mathfrak{S}$ is a pair
$(P,C)$ with $P\in\P$, and $C\subset\mathfrak{a}^P$ a connected component
of the complement of the collection of walls in $\mathfrak{a}^P$.
The collection of chambers is denoted by $C(\Wf,\mathfrak{a},F_0)$. This
is a finite set, which has a natural fibration $(P,C)\to P$
over the set $\P$. The action of $\Wf$ on $\mathfrak{S}$ maps
chambers to chambers, and thus induces a natural action
of the groupoid $\Wf$ on $C(\Wf,\mathfrak{a},F_0)$.

The set $R^P_+$ determines a distinguished chamber
$(P,\mathfrak{a}^{P,+})$ of $\mathfrak{a}^P$, defined by
$\mathfrak{a}^{P,+}=\{x\in\mathfrak{a}^P\mid \a(x)>0  \  \forall\a\in R^P_+\}$.
Observe that the chambers are simplicial cones, and that
$\overline{(P,\mathfrak{a}^{P,+})}$ is the face of
$\overline{\mathfrak{a}^+}$ corresponding to $P$.

An (irredundant) gallery of length $n$ in $\mathfrak{a}^P$ is a
sequence $C_0,C_1,\dots,C_n$ of chambers contained in $\mathfrak{a}^P$
such that each pair $C_{i-1},C_{i}$ ($i=1,\dots,n$) consists of distinct
chambers which share a common face. A minimal gallery is a gallery
of shortest length between its end points. The distance between
two chambers is the length of a minimal gallery between them.

Given a chamber $(P,C)$, we define its height
$\het(P,C)$ to be the number of walls of $\mathfrak{a}^P$
separating $(P,\mathfrak{a}^{P,+})$ and $C$. Thus $\het(P,C)$ is equal
to the distance between $(P,\mathfrak{a}^{P,+})$ and $C$.
\subsubsection{Elementary conjugations}\label{subsub:eltconj}
The faces of $\mathfrak{a}^{P,+}$ are
of the form $\mathfrak{a}^{Q,+}=\overline{\mathfrak{a}^{P,+}}\cap (P,H_\a)$,
where $Q\in\P$
is such that $P\subset Q$ and $Q\backslash P=\{\a\}$.
Thus the faces of $\mathfrak{a}^{P,+}$ are in bijective correspondence with
the
$Q\in\P$ containing $P$ as a maximal proper subset. Given
$Q\in\P$ containing $P$ as a maximal proper subset we define
an element $\s_Q^P\in\Wf_{P,P^\prime}$ by $\s_Q^P=w_Qw_P$.
Here $P^\prime=\s_Q^P(P)\subset Q$ is the conjugate of $P$
in $Q$. Notice that $\s_Q^{P^\prime}=(\s_Q^{P})^{-1}$.
In particular, $P\subset Q$ is self-opposed (in the
terminology of \cite[Section 10.4]{C}, i.e. $P\subset Q$ is
its own conjugate as a maximal standard parabolic subsystem of $Q$)
iff $\s_Q^P$ is an involution.
The following result is well known (see \cite{cas},\cite{MW}).
\begin{thm}\label{thm:cbs}
\begin{enumerate}
\item The action of $\Wf$ on the set
$C(\Wf,\mathfrak{a},F_0)$ of chambers of $\Wf$ is free, and
every $\Wf$-orbit in $C(\Wf,\mathfrak{a},F_0)$ contains a unique
positive chamber $(P,\mathfrak{a}^{P,+})$ (with $P\in\P$).
\item Every element $(P,x)\in\mathfrak{S}$ is
$\Wf$-conjugate to a $(Q,y)\in\mathfrak{S}$ with
$y\in\overline{\mathfrak{a}^{Q,+}}$.
\item If $(P,C_1)$ and $(P,C_2)$ are distinct neighboring chambers
then $C_1=w_1(\mathfrak{a}^{P_1,+})$, $C_2=w_2(\mathfrak{a}^{P_2,+})$
and $w_1^{-1}w_2$ is
the elementary conjugation $\s_Q^{P_2}$ in $\Wf$ with respect to
a uniquely determined $Q\in\P$ which contains both $P_2$ and
$P_1$ as maximal proper subsets.
\end{enumerate}
\end{thm}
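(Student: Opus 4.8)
The plan is to deduce all three statements from standard facts about $W_0$ acting on $\mathfrak a$ (as in \cite{cas},\cite{MW}); the fixed subspace $\mathfrak a^{F_0}$ is a common direct summand of every $\mathfrak a^P$ and plays no role, so the hypothesis $\mathbb R R_0\subsetneq\mathfrak a^*$ causes no trouble. Two inputs will do most of the work: \emph{(a)} $\overline{\mathfrak a^+}$ is a strict fundamental domain for $W_0$, and the $W_0$-isotropy of $y\in\overline{\mathfrak a^+}$ is the standard parabolic subgroup $W_{Q(y)}$ with $Q(y):=\{\alpha\in F_0\mid\alpha(y)=0\}$; \emph{(b)} the $W_0$-isotropy of a point $x$ lying in an \emph{open} chamber $C$ of $\mathfrak a^P$ equals $W_P$ (the stabilizer of a point in a finite reflection group is generated by the reflections in the hyperplanes through it, and no wall $(P,H_\beta)$ passes through such an $x$).

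For the existence part of (i) and for (ii) I would run a dominance algorithm. Given $(P,x)\in\mathfrak S$ (for (i) take $x$ in a chamber $C$ of $\mathfrak a^P$), choose $w\in W_0$ of minimal length with $y:=wx\in\overline{\mathfrak a^+}$; since $s_\alpha$ fixes $x$ for $\alpha\in P$, minimality forces $w\in W^P$. With $Q_0:=Q(y)$ we get $wW_Pw^{-1}\subseteq w\operatorname{Stab}_{W_0}(x)w^{-1}=\operatorname{Stab}_{W_0}(y)=W_{Q_0}$. If $x$ is generic in $\mathfrak a^P$, then by (b) $wW_Pw^{-1}=W_{Q_0}$, so $w(P)$ is a basis of $R_{Q_0}$ contained in $R_{0,+}$, hence $w(P)=Q_0$; thus $w\in\Wf_{P,Q_0}$ sends $(P,C)$ to $(Q_0,\mathfrak a^{Q_0,+})$, which is the existence half of (i). In the general case of (ii), $wW_Pw^{-1}$ is only a parabolic subgroup of $W_{Q_0}$; conjugating it by a suitable $u\in W_{Q_0}$ makes it a standard parabolic $W_{P'}$ with $P'\subseteq Q_0$, and since $u\in W_{Q_0}=\operatorname{Stab}_{W_0}(y)$ we still have $uwx=y$; finally left-multiplying $uw$ by the unique element of $W_{P'}$ that carries the basis $uw(P)$ of $R_{P'}$ to $P'$ produces $g\in\Wf_{P,P'}$ with $gx=y\in\overline{\mathfrak a^+}\cap\mathfrak a^{P'}=\overline{\mathfrak a^{P',+}}$, proving (ii). For uniqueness and freeness: if $g\in\Wf_{P,Q}$ sends $\mathfrak a^{P,+}$ onto $\mathfrak a^{Q,+}$, pick $x$ in the open chamber $\mathfrak a^{P,+}$; then $x,gx\in\overline{\mathfrak a^+}$ forces $gx=x$ by (a), so $g\in W_P$ by (b), whence $g(P)=Q\subseteq R_P\cap F_0=P$ gives $Q=P$, and the only element of $W_P$ preserving $R_{P,+}$ is $e$. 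So every $\Wf$-orbit of chambers contains a unique positive chamber, and the isotropy of an arbitrary chamber, conjugate through the connecting arrow to that of a positive chamber, is trivial; this proves (i).

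For (iii), applying the $\Wf$-isomorphism $w_1^{-1}$ we may assume $P_1=P$ and $C_1=\mathfrak a^{P,+}$ (so $w_1=e$). The common codimension-one face of $C_1$ and its neighbour $C_2$ is a face of $\mathfrak a^{P,+}$, hence equals $\mathfrak a^{Q,+}$ for a \emph{unique} $Q\in\P$ with $P\subseteq Q$ and $Q\setminus P=\{\alpha\}$, $\alpha\in F_0$ (the description of faces of $\mathfrak a^{P,+}$ recalled in paragraph \ref{subsub:eltconj}); in particular $\mathfrak a^Q=(P,H_\alpha)$ and $P_1=P$ is maximal proper in $Q$. Put $P':=\s_Q^P(P)\subseteq Q$; since $|P'|=|P|=|Q|-1$, also $P'$ is maximal proper in $Q$ and $\mathfrak a^{Q,+}$ is a face of $\mathfrak a^{P',+}$, so $\mathfrak a^{Q,+}\subseteq\overline{\mathfrak a^{P',+}}$. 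Now $\s_Q^{P'}=(\s_Q^P)^{-1}\in\Wf_{P',P}$ is a product $w_Qw_{P'}$ of elements of $W_Q$, hence fixes $\mathfrak a^Q$ pointwise; therefore the chamber $\s_Q^{P'}(\mathfrak a^{P',+})$ of $\mathfrak a^P$ has $\mathfrak a^{Q,+}$ in its closure. It is distinct from $C_1=\mathfrak a^{P,+}$, for otherwise $\s_Q^{P'}$ would be a $\Wf$-arrow between positive chambers, hence $e$ by (i), contradicting $Q\supsetneq P'$. Since a chamber of $\mathfrak a^P$ and one of its codimension-one faces determine the adjacent chamber uniquely, $C_2=\s_Q^{P'}(\mathfrak a^{P',+})$. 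By the freeness in (i) this forces $P_2=P'$ and $w_2=\s_Q^{P'}$, so $w_1^{-1}w_2=\s_Q^{P'}=\s_Q^{P_2}$, with $Q$ the unique member of $\P$ having both $P_1$ and $P_2$ as maximal proper subsets, which is (iii).

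The part I expect to fight with is making the dominance/conjugacy bookkeeping in (i)--(ii) airtight: the step $wW_Pw^{-1}=W_{Q_0}\Rightarrow w(P)=Q_0$ and, in (ii), the choice of $u$ and $v$ that straighten a conjugate parabolic to a standard one while fixing $y$, both rely on the structure theory of parabolic subgroups of Coxeter groups (conjugacy of parabolics, uniqueness of a basis compatible with a given positive system) and need care about which coset one is working in. Once (i) is in place, (iii) is a short, essentially combinatorial argument with the elementary conjugations $\s_Q^P$.
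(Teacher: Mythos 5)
The paper does not actually prove Theorem \ref{thm:cbs}: it is stated as ``well known'' with references to \cite{cas} and \cite{MW}, so there is no in-text argument to compare yours against. Your proof is correct and is, in substance, the standard Langlands-type argument one finds in those references: the existence halves of (i) and (ii) via the dominance algorithm (strict fundamental domain $\overline{\mathfrak a^+}$, stabilizer of a dominant point equal to $W_{Q(y)}$, conjugacy of parabolic subgroups to standard ones inside the stabilizer), freeness and uniqueness via the observation that an arrow carrying one positive chamber to another must fix a generic point of $\overline{\mathfrak a^+}$ and hence be the identity, and (iii) by translating to the positive chamber and matching the two chambers adjacent along the face $\mathfrak a^{Q,+}$. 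The steps you flagged as delicate do go through: $wW_Pw^{-1}=W_{Q_0}$ with $w\in W^P$ gives $w(R_{P,+})=R_{Q_0}\cap R_{0,+}$ and hence $w(P)=Q_0$ by uniqueness of the basis attached to a positive system, and in (ii) the straightening elements $u\in W_{Q_0}$ and $v\in W_{P'}$ both lie in $\operatorname{Stab}_{W_0}(y)$, so they do not disturb $y$. The only cosmetic caveat is your closing phrase in (iii): when $P_1=P_2$ the set $Q$ is not pinned down by the containment condition alone but by the shared face $\mathfrak a^{Q,+}$ (which is how your argument in fact determines it); this matches the intended reading of the statement.
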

\begin{cor}\label{cor:elcon}
\begin{enumerate}
\item Every $w\in\Wf_{P,Q}$ can be written as a product
of elementary conjugations in $\Wf$.
\item The minimal length of a word consisting of elementary
conjugations representing $w\in\Wf_{P,Q}$ is equal to the
height of $(Q,w(\mathfrak{a}^{P,+}))$.
\item The reduced expressions for $w\in\Wf_{P,Q}$
as a product of elementary conjugations correspond bijectively
to the minimal galleries in
$\mathfrak{a}^Q$ from $\mathfrak{a}^{Q,+}$ to $w(\mathfrak{a}^{P,+})$.
If $\mathfrak{a}^{Q,+}=C_1,C_2,\dots,C_n=w(\mathfrak{a}^{P,+})$ is a minimal gallery
with $C_i=w_i(\mathfrak{a}^{P_i,+})$ and we put
$x_i=w_{i-1}^{-1}w_{i}\in\Wf_{P_i,P_{i-1}}$,
then $w=x_1\cdot\dots \cdot x_n$ is the corresponding reduced
expression for $w$.
\end{enumerate}
\end{cor}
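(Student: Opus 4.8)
The plan is to deduce all three statements from Theorem \ref{thm:cbs} by the classical dictionary between galleries and words, carried out inside a \emph{fixed} ambient space $\mathfrak{a}^Q$. The one input beyond a literal citation of Theorem \ref{thm:cbs}(iii) that I will need is its converse: for $R\in\P$, if $C=u(\mathfrak{a}^{P,+})$ is a chamber of $\mathfrak{a}^R$ (with $u\in\Wf_{P,R}$ as in the unique decomposition of Theorem \ref{thm:cbs}(i)) and $x=\s_S^{P'}\in\Wf_{P',P}$ is an elementary conjugation, then $C$ and $ux(\mathfrak{a}^{P',+})$ are neighbouring chambers of $\mathfrak{a}^R$. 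I would prove this first for $u=e$, $R=P$: the walls bounding $\mathfrak{a}^{P,+}$ in $\mathfrak{a}^P$ are, via $\mathfrak{a}^{S,+}=\overline{\mathfrak{a}^{P,+}}\cap(P,H_\alpha)$, in bijection with the $S=P\cup\{\alpha\}\in\P$ containing $P$ as a maximal proper subset, so $\mathfrak{a}^{P,+}$ has exactly $|F_0|-|P|$ neighbours; on the other hand, by Theorem \ref{thm:cbs}(i) and (iii) the map sending a neighbour $C'=w'(\mathfrak{a}^{P',+})$ to $w'$ identifies these neighbours injectively with the set of elementary conjugations with target $P$, which also has $|F_0|-|P|$ elements (one $\s_S^{P'}$ for each $S\supsetneq P$ maximal, $P'$ being the conjugate of $P$ in $S$); an injection between finite sets of equal size is a bijection, and that is the desired converse for $u=e$. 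The general case follows by applying the isometry $u\colon\mathfrak{a}^P\isom\mathfrak{a}^R$, which sends chambers to chambers and preserves adjacency.

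Granting this dictionary, statement (i) is immediate. Given $w\in\Wf_{P,Q}$, both $\mathfrak{a}^{Q,+}$ and $w(\mathfrak{a}^{P,+})$ are chambers of $\mathfrak{a}^Q$; since the chambers of a central real hyperplane arrangement are gallery-connected, choose a gallery $\mathfrak{a}^{Q,+}=C_0,C_1,\dots,C_n=w(\mathfrak{a}^{P,+})$ in $\mathfrak{a}^Q$. Write $C_i=w_i(\mathfrak{a}^{P_i,+})$ with $w_i\in\Wf_{P_i,Q}$ by Theorem \ref{thm:cbs}(i); then $w_0=e$, $P_0=Q$, and by uniqueness $(P_n,w_n)=(P,w)$. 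Put $x_i:=w_{i-1}^{-1}w_i\in\Wf_{P_i,P_{i-1}}$. By Theorem \ref{thm:cbs}(iii) each $x_i$ is an elementary conjugation, and telescoping gives $x_1x_2\cdots x_n=w_0^{-1}w_n=w$.

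For (ii) and (iii) I would show that the two constructions just used are mutually inverse bijections, length by length. A gallery $C_0=\mathfrak{a}^{Q,+},\dots,C_n=w(\mathfrak{a}^{P,+})$ yields the expression $x_1\cdots x_n$ with $x_i=w_{i-1}^{-1}w_i$, $C_i=w_i(\mathfrak{a}^{P_i,+})$; conversely an expression $w=x_1\cdots x_n$ with each $x_i\in\Wf_{P_i,P_{i-1}}$ elementary, $P_0=Q$, $P_n=P$, yields $C_i:=w_i(\mathfrak{a}^{P_i,+})$ with $w_i:=x_1\cdots x_i\in\Wf_{P_i,Q}$, and by the converse half of the dictionary (with $R=Q$, $u=w_{i-1}$) consecutive $C_{i-1},C_i$ are neighbours, so $(C_i)_{i=0}^n$ is a gallery of length $n$ from $\mathfrak{a}^{Q,+}$ to $w(\mathfrak{a}^{P,+})$. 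A one-line check using $w_i=x_1\cdots x_i$ and telescoping shows the round trips are the identity. Hence the minimal length of an expression of $w$ in elementary conjugations equals the minimal gallery length in $\mathfrak{a}^Q$ from $\mathfrak{a}^{Q,+}$ to $w(\mathfrak{a}^{P,+})$, which is $\het(Q,w(\mathfrak{a}^{P,+}))$ by the definition of height; this is (ii). Restricting the bijection to reduced expressions (those of minimal length) and to minimal galleries gives (iii), with the correspondence $x_i=w_{i-1}^{-1}w_i$ exactly as stated there.

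The routine steps are the gallery-connectedness of a central arrangement, the telescoping identities, and the verification that the two constructions invert one another. The substantive step, which I would write out most carefully, is the converse of Theorem \ref{thm:cbs}(iii) in the first paragraph --- that applying an elementary conjugation moves a chamber to an adjacent one --- where the delicate point is the bookkeeping of the side on which $\Wf$ acts together with the relation $\s_S^{P'}=(\s_S^{P})^{-1}$ linking a facet of $\mathfrak{a}^{P,+}$ to its opposite.
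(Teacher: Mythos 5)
Your proof is correct, and it is essentially the intended argument: the paper states this corollary without proof as the standard dictionary between galleries in $\mathfrak{a}^Q$ and words in elementary conjugations, deduced from Theorem \ref{thm:cbs} (itself quoted from \cite{cas}, \cite{MW}). Your write-up supplies exactly the missing details — in particular the counting argument showing that every elementary conjugation with target $P$ moves $\mathfrak{a}^{P,+}$ to a neighbouring chamber, which is indeed the one point worth making explicit.
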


\end{document}